\newtheorem{thm}{Theorem}[section]
\newtheorem{lem}[thm]{Lemma}
\newtheorem{prop}[thm]{Proposition}
\newtheorem{cor}[thm]{Corollary}
\theoremstyle{definition}
\newtheorem{defn}[thm]{Definition}
\newtheorem{example}[thm]{Example}
\newtheorem{rem}[thm]{Remark}
\numberwithin{equation}{section} 
\numberwithin{figure}{section}
\numberwithin{table}{section}
\newcommand{\card}{\mathop{\mathrm{card}}}
\begin{document}

\title[Nonhomogeneous random matrices]{The dimension-free structure of \\
nonhomogeneous random matrices}

\author{Rafa{\l} Lata{\l}a}
\address{Institute of Mathematics, University of Warsaw, Banacha 2, 
02-097, Warsaw, Poland}
\email{rlatala@mimuw.edu.pl}

\author{Ramon van Handel}
\address{Fine Hall 207, Princeton University, Princeton, NJ 
08544, USA}
\email{rvan@princeton.edu}

\author{Pierre Youssef}
\address{Laboratoire de Probabilit\'es, Statistique et Mod\'elisation,
Universit\'e Paris-Diderot, 5 rue Thomas Mann, 75205, Paris CEDEX 13,   
France}
\email{youssef@lpsm.paris}

\begin{abstract}
Let $X$ be a symmetric random matrix with independent but non-identically
distributed centered Gaussian entries. We show that 
$$
        \mathbf{E}\|X\|_{S_p} \asymp
        \mathbf{E}\Bigg[
        \Bigg(\sum_i\Bigg(\sum_j X_{ij}^2\Bigg)^{p/2}\Bigg)^{1/p}
        \Bigg]
$$
for any $2\le p\le\infty$, where $S_p$ denotes the $p$-Schatten class and 
the constants are universal. 
The right-hand side admits an explicit expression in terms of the 
variances of the matrix entries. This settles, in the case $p=\infty$, a 
conjecture of the first author, and provides a complete 
characterization of the class of infinite matrices with 
independent Gaussian entries that define bounded operators on $\ell_2$.  
Along the way, we obtain optimal dimension-free bounds on the moments 
$(\mathbf{E}\|X\|_{S_p}^p)^{1/p}$ that are of independent interest.
We develop further extensions to non-symmetric matrices and to
nonasymptotic moment and norm estimates for matrices with non-Gaussian 
entries that arise, for example, in the study of random graphs and
in applied mathematics.
\end{abstract}

\subjclass[2000]{60B20; 
46B09;                  
46L53;                  
15B52}                  

\keywords{Random matrices; noncommutative probability;
Schatten norms; nonasymptotic bounds}

\maketitle

\thispagestyle{empty}

\section{Introduction and main results}

The study of random matrices has long driven mathematical developments 
across a range of pure and applied mathematical disciplines. Initially 
motivated by questions arising in mathematical physics, classical random 
matrix theory (see, e.g., \cite{Tao12} for an introduction to this 
topic) is primarily concerned with random matrix models that possess a 
large degree of symmetry. For example, perhaps the most basic objects in 
this theory are matrices with independent and identically distributed 
entries, called Wigner matrices. Major advances on this subject were 
achieved in the past decade, resulting in an extremely detailed 
understanding of the fine-scale properties of the spectra of large Wigner 
and Wigner-like matrices.

There are however many situations in which classical random matrix models 
are of limited significance. For example, from a functional analytic 
perspective, one might naturally wish to view a random matrix as a random 
linear operator. One of the most basic questions one could ask in this 
context is under what conditions an (infinite) random matrix defines a 
bounded operator on $\ell_2$. Even this simple question appears, at 
present, to be almost entirely open. It is readily verified that such 
matrices could never have identically distributed entries; to obtain 
meaningful answers to such infinite-dimensional questions, it is therefore 
essential to consider nonhomogeneous random matrix models. In another 
context, many problems of applied mathematics, such as the analysis of 
random networks or numerical linear algebra, naturally give rise to 
structured random matrix models that are inherently nonhomogeneous. Such 
problems motivate the development of mathematical methods that can 
accurately capture the underlying structure.

The main approach to the study of general nonhomogeneous random matrices 
has been provided by variations on a classical result in noncommutative 
probability, the noncommutative Khintchine inequality of Lust-Piquard and 
Pisier \cite[section 6]{PX03}. We describe this inequality for 
concreteness in the setting of Gaussian symmetric matrices, though 
variants of it may be developed in much greater generality. Let $X$ be any 
symmetric random matrix with centered jointly Gaussian entries. It is 
readily verified that such a matrix can always be represented as 
$X=\sum_{i\ge 1}g_iA_i$, where $g_i$ are i.i.d.\ standard Gaussian 
variables and $A_i$ are given symmetric matrices. The noncommutative 
Khintchine inequality states that the moments of $X$ admit 
essentially the same estimates that would hold if $A_i$ were scalar 
quantities, that is, we have the following matrix analogue of the 
classical Khintchine inequalities:
$$
	\Bigg\|\Bigg(\sum_{i\ge 1}A_i^2\Bigg)^{\frac{1}{2}}\Bigg\|_{S_{p}} 
	\lesssim
	\mathbf{E}\|X\|_{S_p} \le
	(\mathbf{E}\|X\|_{S_p}^p)^{1/p} \lesssim
	\sqrt{p}\,
	\Bigg\|\Bigg(\sum_{i\ge 1}A_i^2\Bigg)^{\frac{1}{2}}\Bigg\|_{S_{p}}
$$
for all $1\le p<\infty$, where we denote by 
$\|X\|_{S_p}:=\mathrm{Tr}[|X|^p]^{1/p}$ the Schatten $p$-norm (that is, 
the $\ell_p$-norm of the singular values of $X$). This powerful estimate 
makes no assumption whatsoever on the covariance structure of the matrix 
entries; consequently, this result and its generalizations have 
had a major impact in noncommutative probability \cite{PX03} as well as in 
applied mathematics \cite{Tro15}.

Despite the significant power of the noncommutative Khintchine inequality, 
its conclusion remains in many ways unsatisfactory: both the upper and 
lower bounds become increasingly inaccurate for large $p$. For example, 
while this result characterizes, for fixed $p<\infty$, when an infinite 
random matrix is in the Schatten class $S_p$, it sheds little light on the 
question of which infinite random matrices define bounded operators on 
$\ell_2$ (the case $p=\infty$). In finite dimension $n$, one can still 
deduce useful quantitative bounds on the operator norm using that 
$\|X\|_{S_p}\asymp\|X\|_{S_{\infty}}$ for $p\sim\log n$. The resulting 
dimension-dependent bounds are notoriously inaccurate, however: they do 
not even capture correctly the norm of the most basic object in random 
matrix theory, the Gaussian Wigner matrix. These observations indicate 
that a detailed understanding of the spectral norms of nonhomogeneous 
random matrices will require far more precise information than is provided 
by the noncommutative Khintchine inequality. In the most general setting 
considered thus far, this aim remains out of reach. However, in 
this paper, we will settle these questions in what is perhaps the most 
natural case: that of nonhomogeneous random matrices with 
independent Gaussian (and some non-Gaussian) entries.

Let us now consider, therefore, any symmetric random matrix $X$ with 
independent centered Gaussian entries. Such a matrix may be 
represented as $X_{ij}=b_{ij}g_{ij}$, where $g_{ij}$ are i.i.d.\ standard 
Gaussian variables and $b_{ij}\ge 0$ are given scalars for $i\ge j$. In 
this case, the noncommutative Khintchine inequality reduces to
$$
	\Bigg(\sum_i\Bigg(\sum_j b_{ij}^2\Bigg)^{\frac{p}{2}}\Bigg)^{\frac{1}{p}}
	\lesssim \mathbf{E}\|X\|_{S_p} \le
	(\mathbf{E}\|X\|_{S_p}^p)^{1/p}
	\lesssim
	\sqrt{p}\,
	\Bigg(\sum_i\Bigg(\sum_j b_{ij}^2\Bigg)^{\frac{p}{2}}\Bigg)^{\frac{1}{p}}.
$$
One of the main results of this paper is the following sharp form 
of this estimate.

\begin{thm}
\label{thm:main}
Let $X$ be an $n\times n$ symmetric matrix with 
$X_{ij}=b_{ij}g_{ij}$, where $b_{ij}\ge 0$ and $g_{ij}$ are i.i.d.\ 
standard Gaussian variables for $i\ge j$. Then
$$
	(\mathbf{E}\|X\|_{S_p}^p)^{1/p} \asymp
	\Bigg(\sum_i\Bigg(\sum_j b_{ij}^2\Bigg)^{\frac{p}{2}}\Bigg)^{\frac{1}{p}}
	+ \sqrt{p}\,\Bigg(\sum_{i}\max_j b_{ij}^{p}\Bigg)^{\frac{1}{p}}
$$
and
$$
	\mathbf{E}\|X\|_{S_p} 
	\asymp
	\Bigg(\sum_i\Bigg(\sum_j b_{ij}^2\Bigg)^{\frac{p}{2}}\Bigg)^{\frac{1}{p}} +
	\max_{i\le e^p}\max_j b_{ij}^*\sqrt{\log i} +
	\sqrt{p}\,\Bigg(
	\sum_{i\ge e^p}\max_j {b_{ij}^{*}}^p
	\Bigg)^{\frac{1}{p}}
$$
for all $2\le p<\infty$, and
$$
	\mathbf{E}\|X\|_{S_\infty}
	\asymp
	\max_i\sqrt{\sum_j b_{ij}^2} + \max_{ij}b_{ij}^*\sqrt{\log i}
$$
for $p=\infty$.
Here the matrix $(b_{ij}^*)$ is obtained by permuting the rows and 
columns of the matrix $(b_{ij})$ such that
$\max_j b_{1j}^* \ge \max_j b_{2j}^* \ge \cdots \ge \max_j b_{nj}^*$,
and the constants in the estimates are universal \emph{(}independent
of $n,p,\{b_{ij}\}$\emph{)}.
\end{thm}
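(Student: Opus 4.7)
The plan is to prove the three estimates in cascade: first the sharp $p$-th moment bound on $(\mathbf{E}\|X\|_{S_p}^p)^{1/p}$, then extract $\mathbf{E}\|X\|_{S_p}$, and finally pass to $\mathbf{E}\|X\|_{S_\infty}$ by a dimension-free limiting argument. All three share the same flavour: a Khintchine-type term, a Gaussian-maximum term, and a matching upper bound produced by the moment method.

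For the $p$-th moment bound, the Khintchine lower bound $A_p := (\sum_i(\sum_j b_{ij}^2)^{p/2})^{1/p}$ is already in hand from the noncommutative Khintchine inequality displayed in the introduction. For the second lower bound $\sqrt{p}\,B_p$ with $B_p := (\sum_i \max_j b_{ij}^p)^{1/p}$, I would use Schur–Horn majorization, which after a change of orthonormal basis yields $\|X\|_{S_p}^p \ge \sum_i |X_{ii}|^p$, combined with a rearrangement: select a disjoint family of $2\times 2$ principal submatrices, each containing a near-maximal entry of one row, then take expectation and use $\mathbf{E}|g|^p \asymp p^{p/2}$. The matching upper bound is the heart of the paper: for even $p=2k$, expand
\[
\mathbf{E}\,\mathrm{Tr}(X^{2k}) = \sum_{i_1,\dots,i_{2k}} \mathbf{E}[X_{i_1 i_2} X_{i_2 i_3} \cdots X_{i_{2k} i_1}],
\]
apply Wick's formula to restrict to closed walks of length $2k$ in which each edge is traversed an even number of times, and classify these walks by the structure of the underlying multigraph (vertex count, edge multiplicities, excess over a spanning tree). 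Double-tree walks (each edge traversed exactly twice) contribute $A_p^p$, while walks with extra self-intersections should contribute $(Cp)^{p/2} B_p^p$. Carrying out this sharp Füredi–Komlós-style accounting is the main technical obstacle: the combinatorial sum over multigraphs must be regrouped so that the row-maxima $\max_j b_{ij}$ emerge, rather than being bounded by the looser $\sqrt{\sum_j b_{ij}^2}$, which is precisely what would reintroduce the spurious $\sqrt p$ factor of the Khintchine inequality.

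The bound on $\mathbf{E}\|X\|_{S_p}$ follows from the $p$-th moment bound by separating heavy and light rows. After sorting rows so that $\max_j b_{ij}^*$ is non-increasing, truncate at row index $e^p$: apply the $p$-th moment bound to the ``light'' tail (rows $i > e^p$) to produce the $\sqrt{p}$-term, and handle the ``heavy'' head (rows $i\le e^p$) by the Gaussian extremal estimate $\mathbf{E}\max_{i\le N}|g_i| \asymp \sqrt{\log N}$ applied entrywise, which yields the term $\max_{i\le e^p}\max_j b_{ij}^*\sqrt{\log i}$. The operator-norm case is recovered by taking $p\to\infty$: the $\sqrt{p}$ tail contribution disappears, the Khintchine term reduces to $\max_i\sqrt{\sum_j b_{ij}^2}$, and the logarithmic contribution persists. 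Dimension-freeness is automatic because the row-sorting absorbs infinite matrices seamlessly — the $\sqrt{\log i}$ weights penalize rows far out in the sorting, so the estimate remains finite precisely when $X$ defines a bounded operator on $\ell_2$. Given the sharp $p$-moment bound, these last steps are comparatively routine; essentially all of the difficulty is concentrated in the combinatorial upper bound of the previous step.
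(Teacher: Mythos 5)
There are two genuine gaps here, and the second one breaks the architecture of your argument. For the moment bound, you correctly identify the combinatorial regrouping as the main obstacle, but you leave it unresolved, and the dichotomy you propose (double-tree walks contribute $A_p^p$, all other walks contribute $(Cp)^{p/2}B_p^p$) is not how the accounting actually works. The paper's resolution is a H\"older-type inequality (Proposition \ref{prop:holder}, via Theorem \ref{thm:holder}): for any even shape $\mathbf{s}$ visiting $m$ distinct vertices,
$$
\sum_{\mathbf{u}:\mathbf{s}(\mathbf{u})=\mathbf{s}} b_{u_1u_2}\cdots b_{u_{2p}u_1}
\le \Bigg(\sum_i\Bigg(\sum_j b_{ij}^2\Bigg)^p\Bigg)^{\frac{m-1}{p}}
\Bigg(\sum_i\max_j b_{ij}^{2p}\Bigg)^{1-\frac{m-1}{p}},
$$
an interpolation in $m$ rather than a two-case split, proved by passing to a spanning tree of the shape's graph and iteratively pruning leaves with H\"older's inequality (a ``Brascamp--Lieb-type'' argument). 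The sum over shapes is then compared with $\mathbf{E}\,\mathrm{Tr}[Y^{2p}]$ for an $r\times r$ Wigner matrix with $r=\lfloor\sigma_p^2\rfloor+p+1$, which performs the shape counting for free. You also omit the passage from even integer exponents to general real $p\ge 2$, which the paper handles by complex interpolation of Banach lattices.

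The more serious problem is the step you call ``comparatively routine.'' The heavy head (rows and columns with index $\le e^p$) has rank $\lesssim e^p$, so its $S_p$-norm is comparable to its \emph{operator} norm; applying the Gaussian extremal estimate ``entrywise'' controls only $\max_{ij}|X_{ij}|$-type quantities and says nothing about the operator norm of that block. What is needed there is precisely the sharp dimension-free bound $\mathbf{E}\|X\|_{S_\infty}\lesssim \max_i(\sum_j b_{ij}^2)^{1/2}+\max_{ij}b_{ij}^*\sqrt{\log i}$, and this \emph{cannot} be recovered from the moment bound by letting $p\to\infty$: the term $\sqrt{p}\,(\sum_i\max_j b_{ij}^p)^{1/p}$ diverges as $p\to\infty$, and the paper points out that any moment bound at $p\sim\log n$ is necessarily dimension-dependent, so no refinement of the moment method can produce the dimension-free statement. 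The paper's logical order is therefore the reverse of yours: it first proves the $p=\infty$ case by an entirely separate argument --- an algorithmic relabeling of the rows that exhibits a nearly block-diagonal structure with blocks of doubly exponential size $N_k=2^{2^k}$, the diagonal blocks controlled by the dimension-dependent bound of \cite{BvH16} and the off-diagonal remainder by the dimension-free $\log i$ bound of \cite{vH17} --- and only then deduces the finite-$p$ expected-norm bound via the $Y+Z$ truncation at index $e^p$ that you describe. Without an independent proof of the $p=\infty$ case, your cascade has no base.
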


As a consequence we obtain, for example, a characterization of all 
infinite matrices with independent Gaussian entries that define bounded 
operators on $\ell_2$.

\begin{cor}
\label{cor:inf}
Let $(X_{ij})_{i,j\in\mathbb{N}}$ be a symmetric infinite matrix with 
independent Gaussian entries 
$X_{ij}\sim N(a_{ij},b_{ij}^2)$ for $i\ge j$. We have the following
dichotomy:
\begin{itemize}[label=\textbullet, leftmargin=*]
\itemsep\abovedisplayskip
\item If
$$
	\max_i\sum_j b_{ij}^2 <\infty,\qquad
	\max_{ij}b_{ij}^*\sqrt{\log i}<\infty,\qquad
	\|(a_{ij})\|_{S_\infty}<\infty,
$$
then $X$ defines a bounded operator on $\ell_2(\mathbb{N})$ a.s.
\item Otherwise, $X$ is unbounded as an operator on $\ell_2(\mathbb{N})$ 
a.s.
\end{itemize}
\end{cor}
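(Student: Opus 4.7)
The plan is to combine the $p=\infty$ case of Theorem~\ref{thm:main} with standard 0-1 laws for Gaussians. I would decompose $X=Y+A$, where $A=(a_{ij})$ is deterministic and $Y_{ij}=b_{ij}g_{ij}$ is the centered Gaussian part, and let $X_n,Y_n,A_n$ denote the $n\times n$ top-left principal submatrices. Since $X$ is symmetric, its boundedness on $\ell_2(\mathbb{N})$ is equivalent to $\sup_n\|X_n\|_{S_\infty}<\infty$ (and likewise for $Y$ and $A$); crucially, $\|Y_n\|_{S_\infty}$ is monotone increasing in $n$, since each $Y_n$ is a principal submatrix of $Y_{n+1}$.

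For the sufficient direction, assume the three conditions hold. Applying Theorem~\ref{thm:main} at $p=\infty$ to each $Y_n$, the resulting upper bound is dominated by the same expression evaluated on the infinite matrix (the first term trivially, and the second term because the sorted row-maxima of a principal submatrix are dominated term-by-term by those of the full matrix), which is finite. Hence $\sup_n\mathbf{E}\|Y_n\|_{S_\infty}<\infty$, and monotone convergence yields $\mathbf{E}\sup_n\|Y_n\|_{S_\infty}<\infty$, so $Y$ extends to a bounded operator a.s. Adding the bounded $A$ then gives $X$ bounded a.s.

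For the necessary direction, I would argue the contrapositive using two standard tools: (i) the event $\{\|X\|_{S_\infty}<\infty\}$ is invariant under modification of finitely many Gaussian entries (such a change perturbs every $X_n$ by a matrix of fixed finite operator norm), hence it lies in the tail $\sigma$-algebra of the independent family $(g_{ij})$ and has probability $0$ or $1$ by Kolmogorov's 0-1 law; (ii) the reflection $g_{ij}\mapsto -g_{ij}$ yields the distributional identity $X\stackrel{d}{=}2A-X$. If $\|A\|_{S_\infty}=\infty$, then $\|X\|_{S_\infty}<\infty$ a.s.\ would force $\|2A\|_{S_\infty}\le \|X\|_{S_\infty}+\|2A-X\|_{S_\infty}<\infty$ a.s., a contradiction; hence $X$ is unbounded a.s. If instead $\|A\|_{S_\infty}<\infty$ but one of the first two conditions on $(b_{ij})$ fails, Theorem~\ref{thm:main} gives $\sup_n\mathbf{E}\|Y_n\|_{S_\infty}=\infty$, so by monotone convergence $\mathbf{E}\sup_n\|Y_n\|_{S_\infty}=\infty$; since $\sup_n\|Y_n\|_{S_\infty}$ is the supremum of a separable centered Gaussian process, Fernique's theorem rules out a.s.\ finiteness, yielding $\|Y\|_{S_\infty}=\infty$ a.s., and hence $\|X\|_{S_\infty}=\infty$ a.s.

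The quantitative heavy lifting is entirely in Theorem~\ref{thm:main}; this corollary packages it with Kolmogorov's 0-1 law and the symmetrization $X\stackrel{d}{=}2A-X$, which together cleanly separate the deterministic obstruction $\|A\|_{S_\infty}=\infty$ from the two stochastic ones. The only technical point worth flagging is the invocation of Fernique's theorem at the end, needed to promote the identity $\mathbf{E}\sup_n\|Y_n\|_{S_\infty}=\infty$ into almost-sure infiniteness of a Gaussian supremum.
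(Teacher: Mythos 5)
Your proposal is correct and follows essentially the same route as the paper: reduce boundedness to $\sup_n\|X_{[n]}\|_{S_\infty}<\infty$, combine the $p=\infty$ bound of Theorem~\ref{thm:main} with monotone convergence, and invoke a Gaussian zero--one law to turn ``$\mathbf{E}\sup_n\|X_{[n]}\|=\infty$'' into almost sure unboundedness. The paper merely streamlines the bookkeeping of the mean by using Remark~\ref{rem:nonzeromean} (so that $\mathbf{E}\|A_{[n]}+Y_{[n]}\|\asymp\|A_{[n]}\|+\mathbf{E}\|Y_{[n]}\|$ handles all three conditions at once) and cites the Landau--Shepp zero--one law in place of your Kolmogorov-plus-Fernique combination, which is an equivalent substitute.
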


We will also develop a number of extensions of Theorem \ref{thm:main} to 
non-symmetric matrices and to matrices with non-Gaussian entries.

\begin{rem}
While Theorem \ref{thm:main} is formulated for centered (zero 
mean) Gaussian matrices, 
the result is easily extended to noncentered matrices by noting that
$$
	\mathbf{E}[\|A+X\|_{S_p}^q]^{1/q} \asymp \|A\|_{S_p} + 
	\mathbf{E}[\|X\|_{S_p}^q]^{1/q}
$$
for any deterministic matrix $A$ and $p,q$ (see Remark 
\ref{rem:nonzeromean} below). There is therefore no loss of generality in 
restricting attention to centered random matrices, as we will do for 
simplicity throughout the remainder of the paper.
\end{rem}

At first sight, the statement of Theorem \ref{thm:main} may appear 
difficult to interpret. In fact, as we will presently explain, this result 
has a very simple probabilistic formulation that sheds significant light 
on the behavior of these matrices. Moreover, its proof will provide 
considerable insight into the structure of such matrices.

\subsection{Probabilistic formulation}

The questions investigated in this paper have their origin in a result of 
Seginer \cite{Seg00}, which provides matching probabilistic upper and 
lower bounds on the operator norm of random matrices with independent and 
\emph{identically distributed} entries with an arbitrary centered 
distribution: the expected operator norm of any such matrix is of the same 
order as the expectation of the maximal Euclidean norm of its rows and 
columns. The latter is a much simpler probabilistic quantity, which can be 
readily estimated in practice. The distribution of the entries is entirely 
irrelevant here: Seginer only uses that the distribution of the matrix is 
invariant under permutation of the entries, so that one may reduce the 
problem to a question about random matrices defined by random 
permutations. The latter can be investigated by combinatorial methods.

In view of such a remarkably general probabilistic principle, it is 
natural to ask whether the same conclusion also extends to nonhomogeneous 
matrix models that do not possess permutation symmetry. Unfortunately, 
this is not the case: as was already noted by Seginer, his result for 
i.i.d.\ matrices already fails to extend to the simplest examples of 
nonhomogeneous matrices with subgaussian entries. Surprisingly, however, 
no counterexamples could be found to the analogous question for 
\emph{Gaussian} matrices. This led the first author to conjecture, about 
15 years ago, that Seginer's conclusion might remain valid in 
general for nonhomogeneous random matrices with independent centered 
Gaussian entries (cf.\ \cite{Lat05}). Theorem \ref{thm:main} settles 
this conjecture in the affirmative: indeed, one may compute \cite[Theorem 
1.2]{vH17}
$$
	\mathbf{E}\Bigg[
	\max_i\sqrt{\sum_j X_{ij}^2}
	\Bigg] \asymp
	\max_i\sqrt{\sum_j b_{ij}^2} +
	\max_{ij} b_{ij}^*\sqrt{\log i},
$$
so that this quantity is always of the same order as 
$\mathbf{E}\|X\|_{S_\infty}$ by Theorem \ref{thm:main}. Beside settling 
the conjecture, this observation furnishes the case $p=\infty$ of Theorem 
\ref{thm:main} with a natural probabilistic interpretation that is far 
from evident from the explicit expression. Let us note that, on the face 
of it, this conclusion is quite striking: it is trivial that the 
operator norm of a matrix must be large if the matrix possesses a row with 
large Euclidean norm; what we have shown is that for symmetric Gaussian 
matrices with independent centered entries, this is the \emph{only} reason 
why the operator norm can be large, regardless of the variance pattern of 
the matrix entries. For some further discussion and a different 
probabilistic interpretation, see \cite{vH17}.

It turns out that the above probabilistic formulation can be developed 
in much greater generality. To this end, define the mixed norm
$$
	\|X\|_{\ell_p(\ell_2)} :=
	\Bigg(\sum_i\Bigg(\sum_j X_{ij}^2
	\Bigg)^{p/2}\Bigg)^{1/p}
$$
(the definition is extended in the obvious manner to the case $p=\infty$).
As a consequence of Theorem \ref{thm:main}, we will show that the
distributions of the random variables $\|X\|_{S_p}$ and
$\|X\|_{\ell_p(\ell_2)}$ are comparable in a strong sense.

\begin{cor}
\label{cor:schattentail}
Under the assumptions of Theorem \ref{thm:main}, we have
$$
	\mathbf{P}[\|X\|_{\ell_p(\ell_2)}\ge t] \le
	\mathbf{P}[\|X\|_{S_p}\ge t] \le
	C\,\mathbf{P}[\|X\|_{\ell_p(\ell_2)}\ge t/C]
$$
for all $t\ge 0$ and $2\le p\le\infty$, where $C$ is a universal constant.
In particular,
$$
	\mathbf{E}\Psi(\|X\|_{\ell_p(\ell_2)}) \le
	\mathbf{E}\Psi(\|X\|_{S_p}) \le
	\mathbf{E}\Psi(C\|X\|_{\ell_p(\ell_2)})
$$
for every increasing convex function $\Psi$ and $2\le p\le\infty$.
\end{cor}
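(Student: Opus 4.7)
The left-hand inequality $\mathbf{P}[\|X\|_{\ell_p(\ell_2)}\ge t] \le \mathbf{P}[\|X\|_{S_p}\ge t]$ reduces to the pointwise bound $\|X\|_{\ell_p(\ell_2)} \le \|X\|_{S_p}$, valid for $p\ge 2$. Since $\|X\|_{\ell_p(\ell_2)}^p = \sum_i ((X^2)_{ii})^{p/2}$, the Schur--Horn theorem asserts that the diagonal of the PSD matrix $X^2$ is majorized by its eigenvalues $(\lambda_i(X)^2)$; applying the convex function $t \mapsto t^{p/2}$ (convex on $[0,\infty)$ for $p\ge 2$) then yields $\sum_i ((X^2)_{ii})^{p/2} \le \sum_i \lambda_i(X)^p = \|X\|_{S_p}^p$. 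The case $p=\infty$ is the well-known bound $\|X e_i\|_2 \le \|X\|_{S_\infty}$.

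For the right-hand inequality, the plan has two stages. First, I would establish a moment comparison by computing $(\mathbf{E}\|X\|_{\ell_p(\ell_2)}^p)^{1/p}$ directly. Since $g\mapsto \|X_i\|_2 = (\sum_j b_{ij}^2 g_{ij}^2)^{1/2}$ is $\|b_i\|_\infty$-Lipschitz with mean of order $\|b_i\|_2$, Gaussian concentration gives $\mathbf{E}\|X_i\|_2^p \asymp \|b_i\|_2^p + p^{p/2}\|b_i\|_\infty^p$; summing over $i$ yields
$$(\mathbf{E}\|X\|_{\ell_p(\ell_2)}^p)^{1/p} \asymp \Bigl(\sum_i \|b_i\|_2^p\Bigr)^{1/p} + \sqrt{p}\,\Bigl(\sum_i \|b_i\|_\infty^p\Bigr)^{1/p},$$
which matches the first formula in Theorem~\ref{thm:main}. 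Hence $(\mathbf{E}\|X\|_{S_p}^p)^{1/p} \asymp (\mathbf{E}\|X\|_{\ell_p(\ell_2)}^p)^{1/p}$.

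The second stage is to upgrade this $p$-th moment comparison to the stated tail comparison. Both $F := \|X\|_{S_p}$ and $G := \|X\|_{\ell_p(\ell_2)}$ are $O(\sigma)$-Lipschitz functions of the independent Gaussians $(g_{ij})$, where $\sigma := \max_{ij} b_{ij}$, since each norm is dominated by the Frobenius norm for $p\ge 2$. Gaussian concentration therefore gives sub-Gaussian tails at scale $\sigma$ around the respective medians $m_F$ and $m_G$. Moreover, the pointwise bound $G \ge \sigma|g_{i^*j^*}|$ (where $b_{i^*j^*}=\sigma$) yields both $m_G \gtrsim \sigma$ and the reverse Gaussian lower tail $\mathbf{P}[G\ge t/C] \gtrsim \mathbf{P}[\sigma|g|\ge t/C]$. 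The tail comparison then follows by a split into two cases: for $t$ of order the medians, concentration of $G$ around $m_G\gtrsim\sigma$ forces $\mathbf{P}[G\ge t/C]$ to a positive constant, making the inequality trivial; for $t$ much larger than $m_F$, the sub-Gaussian upper tail of $F$ at scale $\sigma$ is absorbed, for sufficiently large universal $C$, into the reverse Gaussian lower tail $\mathbf{P}[\sigma|g|\ge t/C]$ (the exponent $-t^2/(8\sigma^2)$ beats $-t^2/(2C^2\sigma^2)$ once $C\ge 2$, even after accounting for the polynomial Gaussian factor).

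The convex-function inequality follows from the tail inequality via the layer-cake identity $\mathbf{E}\Psi(Y) = \Psi(0) + \int_0^\infty \Psi'(t)\mathbf{P}[Y > t]\,dt$ and a change of variables, which yield $\mathbf{E}\Psi(F) - \Psi(0) \le C(\mathbf{E}\Psi(CG) - \Psi(0))$; the residual factor $C$ is absorbed into the scaling by the convexity inequality $\Psi(\lambda x) \ge \lambda\Psi(x)$ (valid for $\lambda \ge 1$ and increasing convex $\Psi$ with $\Psi(0) = 0$, the general case reducing to this by replacing $\Psi$ with $\Psi - \Psi(0)$), at the cost of enlarging the constant from $C$ to $C^2$. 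The main obstacle I anticipate is the tail comparison step: $p$-th moment comparison does not, in general, imply tail comparison, and one must exploit in an essential way both the sub-Gaussian concentration of $F$ and the matching reverse Gaussian lower tail of $G$ coming from a single entry $\sigma g_{i^*j^*}$, with constants tuned so that the comparison holds uniformly in $t$.
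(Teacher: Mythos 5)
Your lower bound (Schur majorization of the diagonal of $X^2$ by its eigenvalues, plus convexity of $t\mapsto t^{p/2}$) and your convex-function step are correct, and your tail-comparison machinery---Gaussian concentration of $\|X\|_{S_p}$ at scale $\max_{ij}b_{ij}$ around its median, the single-entry lower bound $\|X\|_{\ell_p(\ell_2)}\ge\max_{ij}b_{ij}\,|g|$, and a case split in $t$---is essentially the paper's own argument. But there is a genuine gap in what you feed into that machinery. Your ``first stage'' establishes only the $p$-th moment comparison $(\mathbf{E}\|X\|_{S_p}^p)^{1/p}\asymp(\mathbf{E}\|X\|_{\ell_p(\ell_2)}^p)^{1/p}$, whereas the case analysis requires the comparison of medians (equivalently of first moments), $m_F\lesssim m_G$ where $F=\|X\|_{S_p}$ and $G=\|X\|_{\ell_p(\ell_2)}$. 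Without it, your two cases do not cover all $t$: in the regime $Cm_G\ll t\le 2m_F$ the probability $\mathbf{P}[F\ge t]$ can be of order one while concentration of $G$ around $m_G$ at scale $\sigma$ makes $\mathbf{P}[G\ge t/C]$ exponentially small, so the inequality at $t\asymp m_F$ is essentially \emph{equivalent} to $m_F\lesssim m_G$. The $p$-th moment comparison does not deliver this: one only gets $\mathbf{E}F\le(\mathbf{E}F^p)^{1/p}\lesssim(\mathbf{E}G^p)^{1/p}\lesssim\mathbf{E}G+\sqrt{p}\,\sigma\lesssim\sqrt{p}\,\mathbf{E}G$, i.e.\ exactly the $\sqrt{p}$ loss (already present in noncommutative Khintchine) that the paper is built to remove; a Gaussian vector with a single nonzero coordinate shows that $(\mathbf{E}G^p)^{1/p}/\mathbf{E}G$ really can be of order $\sqrt{p}$.

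The missing input is the second display of Theorem \ref{thm:main}, i.e.\ Theorem \ref{thm:schatten} together with Corollary \ref{cor:expl}, which give $\mathbf{E}\|X\|_{S_p}\asymp\mathbf{E}\|X\|_{\ell_p(\ell_2)}$ and hence $m_F\le 2\mathbf{E}F\le K\mathbf{E}G$; combined with Paley--Zygmund and the Poincar\'e bound $\mathrm{Var}(G)\lesssim(\mathbf{E}G)^2$ this makes $\mathbf{P}[G\ge t/C]$ bounded below by a constant for all $t\le 2m_F$, which is exactly how the paper closes the first case. This is a legitimate thing to invoke (the corollary is stated under the hypotheses of Theorem \ref{thm:main}, and the paper does invoke it), but be aware that it is the main theorem of the paper---proved via the block-diagonal decomposition of Section \ref{sec:opnorm}---and not a consequence of the elementary row-wise moment computation in your first stage. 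As written, your proof identifies the moment-to-tail upgrade as the obstacle but then relies on the wrong moment comparison to carry it out.
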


By taking $\Psi(x)=x^p$ or $\Psi(x)=x$, respectively, this result provides 
a tantalizing probabilistic interpretation of both explicit bounds that 
appear in Theorem \ref{thm:main}: what we have shown is that for any 
symmetric Gaussian matrix with independent centered entries, the Schatten 
$p$-norm (that is, the $\ell_p$-norm of its eigenvalues) is of the same 
order as the $\ell_p$-norm of the Euclidean norms of its rows.

\subsection{Main ideas of the proof}

The proof of Theorem \ref{thm:main} builds on initial progress that was 
made in this direction in two earlier papers \cite{BvH16,vH17}. 

In 
\cite[Theorem 1.1]{BvH16}, Bandeira and the second author proved the 
following bound on the operator norm of a Gaussian random matrix, which 
corresponds to the special case $p\sim\log n$ of the first 
inequality of Theorem \ref{thm:main} (cf.\ Theorem \ref{thm:bvh} below):
$$
	(\mathbf{E}\|X\|_{S_{\infty}}^{\log n})^{1/\log n}
	\asymp
	\max_i\sqrt{\sum_j b_{ij}^2} +
	\max_{ij} b_{ij}\sqrt{\log n}.	
$$
The proof relies on a comparison principle between the moments
$\mathbf{E}[\mathrm{Tr}[X^{2p}]]$ of the $n$-dimensional matrix $X$ and 
the moments $\mathbf{E}[\mathrm{Tr}[Y^{2p}]]$ of an $r$-dimensional Wigner 
matrix $Y$, where $r$ depends only on the coefficients $b_{ij}$ and $p$.
This elementary dimension-compression argument has proved to yield a very 
efficient mechanism to reduce questions about nonhomogeneous random 
matrices to analogous questions on homogeneous random matrices, which are 
already well understood.

By Jensen's inequality, the above result provides a dimension-dependent 
upper bound on the expected operator norm $\mathbf{E}\|X\|_{S_\infty}$ 
that turns out to be nearly sharp in many cases of interest. 
Nonetheless, the dimension-dependence makes this bound useless in 
infinite-dimensional situations, while our main aim is to obtain optimal 
dimension-free bounds. Unfortunately, this bound already yields the 
optimal result that could be obtained for the operator norm by the moment 
method.
In order to surmount this obstacle, the second author developed in 
\cite{vH17} an entirely different method to bound the operator norm of 
nonhomogeneous Gaussian matrices through the geometry of random processes.
This approach made it possible to prove the following dimension-free 
bound, cf.\ \cite[Theorem 1.4]{vH17} and subsequent discussion:
$$
        \mathbf{E}\|X\|_{S_\infty}\lesssim
        \max_i\sqrt{\sum_j b_{ij}^2} +
        \max_{ij}b_{ij}^*\log i.
$$
This result is strongly reminiscent of the sharp bound provided by
Theorem \ref{thm:main} in the case $p=\infty$, but nonetheless falls short 
of this goal: to obtain the optimal estimate, the factor $\log i$ in the 
second term should be reduced to $\sqrt{\log i}$. While this may seem
a small step away from the correct result, the suboptimal term appears 
to arise from a fundamental obstruction in this method of 
proof, cf.\ \cite[section 4.3]{vH17}. It is therefore 
unclear how this argument could be significantly improved.

In view of the two suboptimal bounds discussed above, it is natural to 
expect that the optimal result should arise by eliminating an inefficiency 
in the proof of one of these bounds. One of the main insights of this 
paper is that the missing ingredient for the proof of Theorem 
\ref{thm:main} (in the case $p=\infty$) lies in an entirely different 
place: to prove this result, we will exploit some strong structural 
information on the variance pattern of the underlying matrix. To this end, 
a key idea that we will develop is that, modulo a relabeling of the rows 
and columns, the random matrix $X$ can have bounded operator norm only if 
has a very specific form: it consists of a (nearly) block-diagonal 
``core'', which is made up of blocks of controlled dimension in which the 
variances of the entries decay at a slow rate; and an off-diagonal 
remainder, in which the entry variances decay at a much faster rate (this 
structure will be made precise in Lemma \ref{lem:core} and is illustrated 
in Figure \ref{fig:opdecomp} below). Remarkably, it turns out that with 
this structure in hand, the suboptimal bounds described above already 
suffice to conclude the proof for $p=\infty$: the dimension-dependent 
bound optimally controls the diagonal blocks, which are low-dimensional by 
construction; while the dimension-free bound is sufficiently accurate to 
optimally control the remainder of the matrix. Beside enabling us to prove 
the result, this structural information provides considerable insight into 
what random matrices with bounded operator norm look like: they must be 
nearly block-diagonal, in precise sense.

For the general case where $2\le p<\infty$, we will introduce a further 
decomposition of the matrix (illustrated in Figure \ref{fig:spdecomp}): in 
one part of the matrix, the $S_p$-norm is of the same order as the 
$S_\infty$-norm, which was already addressed above; while in the remaining 
part, the expected $S_p$-norm is optimally captured by the corresponding 
moment bound. The main difficulty in this part of the proof is to obtain 
optimal bounds on the moments $(\mathbf{E}\|X\|_{S_p}^p)^{1/p}$ for 
$p\ll\log n$, which do not follow from the simple argument of 
\cite{BvH16}. While the proof is based on the same dimension-compression 
idea as in \cite{BvH16}, the optimal implementation of this method is 
significantly more challenging and requires us to develop certain 
combinatorial ``Brascamp-Lieb-type'' inequalities that may be of 
independent interest (see section \ref{sec:holder} below).

Let us note that the proofs of the case $p=\infty$ of Theorem 
\ref{thm:main} and of Corollaries \ref{cor:inf} and \ref{cor:schattentail} 
can be read independently of the more technical combinatorial arguments 
needed to extend our results to general Schatten norms. The reader who is 
interested primarily in the operator norm may skip ahead directly to 
section \ref{sec:opnorm}.

\subsection{Non-symmetric matrices and non-Gaussian entries}

So far we have stated our results in the setting of symmetric matrices 
with independent centered Gaussian entries. However, neither 
symmetry nor Gaussianity is essential, and we will develop various 
extensions of our main results to more general situations.

The easiest to eliminate is the symmetry assumption, which plays little 
role in the proofs and is largely made for notational convenience. All our 
results extend readily to non-symmetric matrices, as will be shown in 
section \ref{sec:nonsym}.

The question of non-Gaussian entries is more subtle. As was explained 
above, an example of Seginer \cite[Theorem 3.2]{Seg00} shows that the 
conclusion of Corollary~\ref{cor:schattentail} \emph{cannot} hold even for 
subgaussian variables. Despite superficial similarities, Corollary 
\ref{cor:schattentail} arises for an entirely different reason than 
Seginer's result for i.i.d.\ matrices: we cannot prove a general 
comparison principle between $\|X\|_{S_p}$ and $\|X\|_{\ell_p(\ell_2)}$; 
instead, we prove explicit upper and lower bounds on each of these 
quantities in the Gaussian case, and show that these coincide. Gaussian 
analysis plays a crucial role in the proofs of these bounds, which 
suggests that the Gaussian distribution is rather special. Nonetheless, we 
will show in section \ref{sec:nong} that Corollary \ref{cor:schattentail} 
extends to a large class of non-Gaussian random matrices. Roughly 
speaking, we expect that the equivalence between $\|X\|_{S_p}$ and 
$\|X\|_{\ell_p(\ell_2)}$ should hold rather generally for entry 
distributions whose tails are at least as heavy as that of the Gaussian 
distribution, but not when the tails are strictly lighter than the 
Gaussian distribution. We will not develop this statement in full 
generality, but rather demonstrate this phenomenon in a broad class of 
heavy-tailed distributions.

While our results cannot yield optimal two-sided bounds for matrices with 
light-tailed entries, the explicit bounds of Theorem \ref{thm:main} still 
yield very useful \emph{upper} bounds in this case. Of particular interest 
in this setting is the fact that, while Theorem \ref{thm:main} is sharp 
only up to universal constants, the sharpest moment bounds obtained in 
this paper possess nearly optimal constants. We will exploit this 
fact in section~\ref{sec:bdd} to obtain some very sharp bounds on the 
norms of non-homogeneous matrices with bounded entries. For example, we 
will prove the following result:

\begin{thm}
\label{thm:bmain}
Let $X$ be an $n\times n$ symmetric matrix with independent, 
centered, and uniformly bounded entries for $i\ge j$. Then we have for 
every $p\in\mathbb{N}$
$$
	(\mathbf{E}\|X\|_{S_{2p}}^{2p})^{1/2p} \le
	2\Bigg(\sum_i\Bigg(\sum_j 
	\mathbf{E}[X_{ij}^2]\Bigg)^p\Bigg)^{\frac{1}{2p}}
	+C\sqrt{p}\,
	\Bigg(\sum_{i,j}
	\|X_{ij}\|_\infty^{2p}\Bigg)^{\frac{1}{2p}},
$$
where $C$ is a universal constant.
\end{thm}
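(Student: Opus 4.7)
The plan is to use the moment method and compare to the Gaussian case. Writing $\sigma_{ij}^2 := \mathbf{E}[X_{ij}^2]$ and $b_{ij} := \|X_{ij}\|_\infty$, I would expand
$$
\mathbf{E}\|X\|_{S_{2p}}^{2p} = \mathbf{E}[\mathrm{Tr}(X^{2p})] = \sum_{i_1,\ldots,i_{2p}} \mathbf{E}[X_{i_1i_2}X_{i_2i_3}\cdots X_{i_{2p}i_1}]
$$
as a sum over closed walks of length $2p$ on $\{1,\ldots,n\}$. By the independence of the entries for $i\ge j$ together with the centering assumption, only walks in which each distinct edge appears with multiplicity $m_e\ge 2$ survive the expectation, and each surviving contribution factorizes over distinct edges as $\prod_e \mathbf{E}[X_e^{m_e}]$.

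The crucial input from the bounded assumption is the pointwise inequality $\mathbf{E}[X_{ij}^m] \le \sigma_{ij}^2 b_{ij}^{m-2}$ for $m\ge 2$, which dominates each walk's contribution by $\prod_e \sigma_e^2 b_e^{m_e-2}$. I would split the surviving walks into two classes: \emph{tree-like} walks, in which every distinct edge has multiplicity exactly $2$, and \emph{higher} walks, in which some distinct edge appears with multiplicity at least $4$. On a tree-like walk, the bound $\prod_e \sigma_e^2$ is identical to the contribution that the same walk would make to $\mathbf{E}[\mathrm{Tr}(G^{2p})]$ for a Gaussian matrix $G$ with entries of variance $\sigma_{ij}^2$ (because the second Gaussian moment already equals $\sigma^2$). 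Summing only over tree-like walks therefore gives an upper bound that is exactly a Gaussian trace moment, to which I would apply the sharp Gaussian moment estimate obtained earlier in the paper, taking care to invoke the version whose constant has been tracked to be near-optimal. This produces the first term $2(\sum_i(\sum_j \sigma_{ij}^2)^p)^{1/2p}$, including the precise constant $2$.

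For the higher walks, at least one edge contributes an extra factor of $b_e^{m_e-2}$ beyond what a Gaussian walk would produce, and the resulting combinatorial sum must be bounded in terms of the $b_{ij}$. This I would handle using the Brascamp--Lieb-type inequalities developed in Section~\ref{sec:holder}: they give precisely the combinatorial control needed to bound the sum of $\prod_e \sigma_e^2 b_e^{m_e-2}$ over higher walks by $(C\sqrt{p})^{2p}\sum_{i,j} b_{ij}^{2p}$ with the correct $p$-scaling and no spurious dimension factors. Adding the two contributions and taking $(2p)$-th roots via $(a^{2p}+c^{2p})^{1/2p}\le a+c$ gives the claimed inequality. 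The main obstacle is the simultaneous preservation of the sharp constant $2$ on the tree-like part and of the optimal $\sqrt{p}$ scaling on the higher-walk correction: the former forces the use of the refined Gaussian moment bound (rather than any crude tracing argument), and the latter depends essentially on the combinatorial inequalities of Section~\ref{sec:holder}, since the tighter Gaussian second-term $\sum_i \max_j b_{ij}^{2p}$ is unavailable in the bounded regime and must be replaced by the larger but accessible $\sum_{i,j} b_{ij}^{2p}$.
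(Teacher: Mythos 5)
Your setup (expanding $\mathbf{E}[\mathrm{Tr}(X^{2p})]$ over closed walks, keeping only walks in which every distinct edge has multiplicity $\ge 2$, and dominating each edge factor by $\mathbf{E}[|X_{ij}|^m]\le \sigma_{ij}^2 b_{ij}^{m-2}$) matches the paper, and your bound on the ``all multiplicities exactly $2$'' walks by the Gaussian trace moment with variance profile $(\sigma_{ij}^2)$, followed by Theorem~\ref{thm:moment}, is a legitimate way to extract the leading term $2\sigma_p$ (the Gaussian bound's own second term is controlled by $\sigma_{ij}\le\|X_{ij}\|_\infty$). But the treatment of the remaining walks has a genuine gap, in two respects. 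First, your dichotomy is incomplete: since the entries are only centered, not symmetrically distributed, edges of odd multiplicity $3$ contribute (e.g.\ a $4$-cycle with every edge traversed three times is an admissible closed walk of length $12$), so the complement of the ``all exactly $2$'' class is ``some multiplicity $\ge 3$,'' not ``$\ge 4$.'' Second, and more seriously, the claim that the higher walks are collectively bounded by $(C\sqrt{p})^{2p}\sum_{i,j}\|X_{ij}\|_\infty^{2p}$ is false. A shape with $m$ distinct vertices contributes, after the H\"older/Brascamp--Lieb step, a term of order $\sigma_p^{2(m-1)}(\sigma_p^*)^{2p-2(m-1)}$; for a shape with a single edge of multiplicity $4$ and $m=p$ vertices this is $\sigma_p^{2p-2}(\sigma_p^*)^2$, which dwarfs $(\sqrt{p}\,\sigma_p^*)^{2p}$ whenever $\sigma_p\gg\sqrt{p}\,\sigma_p^*$ (take all entries Rademacher with $n\gg p$: the higher walks contribute $\gtrsim n^{p}$ while your claimed bound is $p^{p}n^{2}$). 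So the higher walks produce genuine mixed terms in $\sigma_p$ and $\sigma_p^*$ and cannot be absorbed into the second summand alone.

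The missing ingredient is the shape-counting step. The Brascamp--Lieb-type inequalities of section~\ref{sec:holder} bound the sum over vertex labelings $\mathbf{v}$ for a \emph{fixed} shape; they say nothing about how many shapes with $m$ distinct vertices there are, and matching that count against the binomial coefficients in the expansion of $(2\sigma_p+C\sqrt{p}\,\sigma_p^*)^{2p}$ is exactly where the constant $2$ is at stake. The paper resolves this by comparing $\sum_{\mathbf{s}}\sigma_p^{2(m(\mathbf{s})-1)}$ with the trace moments of an $r\times r$ reference matrix $Y$ with i.i.d.\ entries, $r=\lfloor\sigma_p^2\rfloor+p+1$, chosen so that $\mathbf{E}[Y_{ij}^k]\ge 1$ for every integer $k\ge 2$ --- a centered, rescaled $\mathrm{Bern}(1/4)$ ensemble. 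This choice is forced by point (i) above: a Gaussian or Rademacher reference matrix has vanishing odd moments and therefore fails to dominate the count of admissible shapes with odd edge multiplicities. One then concludes with the P\'ech\'e--Soshnikov bound $\mathbf{E}\|Y\|\le 2\sqrt{r}+C'$ for Wigner matrices with non-symmetrically distributed entries and Talagrand's concentration inequality to pass to $(\mathbf{E}\|Y\|^{2p})^{1/2p}\le 2\sqrt r+C''\sqrt p$. Without this comparison (or an equivalent sharp enumeration of admissible shapes), your argument does not close.
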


This improves a result proved in \cite{BvH16} for the special case 
$p\sim\log n$ under an additional symmetry assumption. We will spell out 
several variations on this result in section \ref{sec:bdd} in view of 
their considerable utility in applications. For example, we will show that 
Theorem \ref{thm:bmain} effortlessly yields the correct behavior of the 
spectral edge of the centered adjacency matrix of Erd\H{o}s-R\'enyi random 
graphs, recovering a recent result of \cite{BBK17} by a simpler and more 
general method.

\subsection{Overview and notation}
\label{sec:notation}

The rest of this paper is organized as follows. In section 
\ref{sec:moment}, we obtain optimal bounds on the moments 
$(\mathbf{E}\|X\|_{S_p}^p)^{1/p}$ in the Gaussian symmetric case, proving 
the first part of Theorem \ref{thm:main}. The second part of Theorem 
\ref{thm:main}, concerning the expected norms $\mathbf{E}\|X\|_{S_p}$, is 
proved in section \ref{sec:norm}. Here we also prove Corollary 
\ref{cor:schattentail}, as well as the characterization of infinite 
Gaussian matrices that define bounded operators on $\ell_2$. Finally, in 
section \ref{sec:ext}, we develop various extensions of our main results 
to non-symmetric and non-Gaussian matrices.

The following notation and terminology will be used throughout the paper. 
The Schatten norm $\|X\|_{S_p}$ and mixed norm $\|X\|_{\ell_p(\ell_2)}$ 
were already defined above. We will sometimes denote the operator norm as 
$\|X\|:=\|X\|_{S_\infty}$ for notational simplicity. All random matrices 
will be real unless otherwise noted (however, the extension of our main 
results to complex matrices is essentially trivial, cf.\ section 
\ref{sec:complex} below). We write $a\lesssim 
b$ or $b\gtrsim a$ to denote that $a\le Cb$ for a universal constant $C$ 
(which does not depend on any parameter of the problem unless explicitly 
noted otherwise). We will write $a\asymp b$ if $a\lesssim b$ and 
$b\lesssim a$. Finally, we recall that a random variable $Z$ is said 
to be $\sigma$-subgaussian if $\mathbf{E}[e^{tZ}] \le e^{t^2\sigma^2/2}$ 
for all $t\in\mathbb{R}$.

\section{Moment estimates}
\label{sec:moment}

The main result of this section is the following theorem.

\begin{thm}
\label{thm:moment}
Let $X$ be an $n\times n$ symmetric matrix with 
$X_{ij}=b_{ij}g_{ij}$, where $b_{ij}\ge 0$ and $g_{ij}$ are i.i.d.\ 
standard Gaussian variables for $i\ge j$. Then for $p\in\mathbb{N}$
$$
	(\mathbf{E}[\mathrm{Tr}[X^{2p}]])^{1/2p} \le
	2\Bigg(\sum_i\Bigg(\sum_j b_{ij}^2\Bigg)^p\Bigg)^{1/2p}
	+ 5\sqrt{2p}\,\Bigg(\sum_i \max_j b_{ij}^{2p}\Bigg)^{1/2p}.
$$
\end{thm}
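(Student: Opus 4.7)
The plan is to use the moment method. I would first expand
$$\mathbf{E}[\mathrm{Tr}[X^{2p}]] = \sum_{i_0, \ldots, i_{2p-1}} \mathbf{E}\Bigg[\prod_{k=0}^{2p-1} X_{i_k i_{k+1}}\Bigg]$$
(indices mod $2p$) and apply Isserlis' theorem to write each Gaussian expectation as a sum over perfect matchings $\pi$ of $\{0, \ldots, 2p-1\}$ for which every pair $\{k,l\} \in \pi$ satisfies $\{i_k, i_{k+1}\} = \{i_l, i_{l+1}\}$. Each nonvanishing term corresponds to a closed walk of length $2p$ in $\{1, \ldots, n\}$ whose edges, taken with multiplicities, are matched up by $\pi$ and in particular appear evenly often. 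I would then group terms by the combinatorial type of $(\gamma, \pi)$, splitting into (i) \emph{tree-type}, where the underlying simple graph is a tree with exactly $p$ distinct edges each traversed twice (so $p+1$ distinct vertices), and (ii) \emph{non-tree-type}, where either some edge has multiplicity $\ge 4$ or the underlying graph contains a cycle, so that the walk visits at most $p$ distinct vertices.

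For tree-type walks, the contribution factors as $\sum_T \sum_{\text{labelings}} \prod_{(u,v)\in T} b_{i_u i_v}^2$, where $T$ ranges over rooted plane trees with $p$ edges. The main technical ingredient is the Brascamp--Lieb-type inequality
$$\sum_{\text{labelings of }T} \prod_{(u,v)\in T} b_{i_u i_v}^2 \;\le\; \sum_i\Bigg(\sum_j b_{ij}^2\Bigg)^p,$$
which should hold with constant $1$ for every tree $T$ with $p$ edges. I would prove this by rooting $T$ and successively summing out leaves: at each leaf $v$ attached to $u$, the sum over $i_v$ reduces a factor $b_{i_u i_v}^2$ to a factor $\sum_j b_{i_u j}^2$ in the remaining (smaller) tree, and a careful application of H\"older's inequality consolidates the resulting factors so as not to lose any constant. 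Summing over the $C_p$ rooted plane tree shapes and using the asymptotic $C_p^{1/2p} \to 2$ produces the first term of the theorem with constant $2$.

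For non-tree-type walks, the underlying graph has at most $p$ vertices, so one free summation is lost; the corresponding vertex indices must be controlled using $\max_j b_{ij}^{2k}$ factors rather than $\sum_j b_{ij}^2$ factors. Enumerating the non-tree shapes by Catalan-like counting, with extra combinatorial weights that grow only polynomially in $p$, yields the remainder $(5\sqrt{2p})^{2p}\sum_i \max_j b_{ij}^{2p}$. The main obstacle is establishing the Brascamp--Lieb inequality for trees with the \emph{sharp} constant $1$, because any lossy version would degrade the universal constants in the theorem. This is precisely the step that goes beyond the dimension-compression argument of \cite{BvH16}, which compares to a single Wigner matrix and therefore only produces the optimal bound in the regime $p \sim \log n$; the refinement here is what extends the estimate to all $p \in \mathbb{N}$ with dimension-free constants.
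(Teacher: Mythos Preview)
Your tree-type analysis is essentially correct and matches the paper's key lemma: the inequality
\[
\sum_{v_0,\ldots,v_p\in[n]}\ \prod_{\{a,b\}\in E(T)} b_{v_a v_b}^2 \;\le\; \sum_i\Bigg(\sum_j b_{ij}^2\Bigg)^p
\]
for every tree $T$ with $p$ edges is a special case of the paper's Brascamp--Lieb-type Lemma~\ref{lem:bizarrobl} (take all $p_e=p$), and summing over the $C_p\le 4^p$ rooted plane trees does recover the first term with constant $2$.

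The gap is in your non-tree step. Your claim that the non-tree contribution is bounded by $(5\sqrt{2p})^{2p}\sum_i\max_j b_{ij}^{2p}$ alone is false. A shape with $m$ distinct vertices, $2\le m\le p$, contributes a term of order $\sigma_p^{2(m-1)}(\sigma_p^*)^{2(p-m+1)}$ (this is Proposition~\ref{prop:holder}), not $(\sigma_p^*)^{2p}$; when $m$ is close to $p$ and $\sigma_p\gg\sigma_p^*$ this is vastly larger than your proposed bound. So the dichotomy ``tree $\Rightarrow$ first term, non-tree $\Rightarrow$ second term'' does not work. To control non-tree shapes you need the full Proposition~\ref{prop:holder}, which in turn requires two ingredients you have not addressed: reducing an arbitrary connected multigraph to a tree (Lemma~\ref{lem:tree}), and a Brascamp--Lieb inequality for trees with \emph{general} edge multiplicities $k_e\ge 2$, not just the uniform case $k_e\equiv 2$ you state.

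There is also a misconception in your final paragraph. The paper does \emph{not} abandon the Wigner comparison of \cite{BvH16}; it still compares $\mathbf{E}[\mathrm{Tr}[X^{2p}]]$ to $\mathbf{E}[\|Y\|^{2p}]$ for an $r\times r$ Wigner matrix with $r=\lfloor\sigma_p^2\rfloor+p+1$, and this is precisely how the summation over all shapes (with their Gaussian weights $\prod_i(2i-1)!!^{n_{2i}(\mathbf{s})}$) is handled. What is new relative to \cite{BvH16} is the sharper per-shape estimate of Proposition~\ref{prop:holder}, which replaces the crude bound $n\cdot(\max_i\sum_j b_{ij}^2)^{m-1}(\max_{ij}b_{ij}^2)^{p-m+1}$ by a dimension-free one. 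Your ``Catalan-like counting with polynomial weights'' is not a substitute for this comparison: the number of shapes with a given $m$, weighted by the Gaussian moment factors, is not polynomially bounded in $p$.
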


This result essentially contains the first part of Theorem \ref{thm:main}; 
it remains to extend the conclusion to arbitrary (non-integer) $p$ and to 
prove a matching lower bound. To this end, we will establish the following 
corollary:

\begin{cor}
\label{cor:moment}
Under the assumptions of Theorem \ref{thm:moment}, we have
for all $2\le p<\infty$
$$
	(\mathbf{E}\|X\|_{S_p}^p)^{1/p} \asymp
	\Bigg(\sum_i\Bigg(\sum_j b_{ij}^2\Bigg)^{p/2}\Bigg)^{1/p}
	+ \sqrt{p}\,\Bigg(\sum_{i,j} b_{ij}^{p}\Bigg)^{1/p}.
$$
\end{cor}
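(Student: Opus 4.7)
The plan is to sandwich $(\mathbf{E}\|X\|_{S_p}^p)^{1/p}$ between the upper bound furnished by Theorem~\ref{thm:moment} (for even integer $p$) and a matching lower bound via Schur--Horn majorisation, and then pass from even integer to arbitrary real $p\ge 2$.

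For the upper bound at an even integer $p=2m$, Theorem~\ref{thm:moment} is immediately applicable: $\|X\|_{S_{2m}}^{2m}=\mathrm{Tr}[X^{2m}]$, and the elementary inequality $(\sum_i\max_j b_{ij}^{2m})^{1/2m}\le(\sum_{i,j}b_{ij}^{2m})^{1/2m}$ transfers its second term into the form claimed here. For the lower bound, valid at all real $p\ge 2$, I would apply the Schur--Horn theorem to $X^TX$: its eigenvalues $\sigma_i(X)^2$ majorise its diagonal entries $\sum_j X_{ij}^2$, so by convexity of $t\mapsto t^{p/2}$ for $p\ge 2$,
$$
  \|X\|_{S_p}^p = \sum_i \sigma_i(X)^p \;\ge\; \sum_i\Big(\sum_j X_{ij}^2\Big)^{p/2} \;\ge\; \sum_{i,j}|X_{ij}|^p,
$$
where the last inequality uses $\ell_p\le\ell_2$ on sequences. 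Taking expectations and applying Jensen's inequality inside each row yields $\mathbf{E}\|X\|_{S_p}^p \ge \sum_i(\sum_j b_{ij}^2)^{p/2}$, producing the first term, while the entry-wise bound gives $\mathbf{E}\|X\|_{S_p}^p \ge(\sum_{i,j}b_{ij}^p)\,\mathbf{E}|g|^p$ and $(\mathbf{E}|g|^p)^{1/p}\asymp\sqrt{p}$ produces the second term.

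To extend the upper bound from the even-integer case to real $p\ge 2$, I would combine Schatten monotonicity $\|X\|_{S_p}\le\|X\|_{S_{p_0}}$ (with $p_0=2\lfloor p/2\rfloor$) and Lyapunov's inequality with Gaussian concentration. Because $\|\cdot\|_{S_{p_0}}\le\|\cdot\|_F$ for $p_0\ge 2$, the map $g\mapsto \|X(g)\|_{S_{p_0}}$ is $(\max_{ij}b_{ij})$-Lipschitz, so its $p$-th moment differs from its $p_0$-th moment by at most $C\sqrt{p}\,\max_{ij}b_{ij}$, a quantity that sits inside the $\sqrt{p}(\sum_{i,j}b_{ij}^p)^{1/p}$ piece of the target; Theorem~\ref{thm:moment} then handles the $p_0$-th moment. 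The matching lower bound for real $p$ follows immediately from the Schur--Horn argument above, which was stated uniformly for $p\ge 2$.

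The main subtlety I anticipate lies in closing the real-$p$ extension of the upper bound. A naive log-convex interpolation between adjacent even-integer bounds introduces a factor of the form $R(p_0)/R(p)$, which is not universally bounded in the dimension: the ratios $\|a\|_{p_0}/\|a\|_p$ and $\|B\|_{p_0}/\|B\|_p$ can each grow with $n$ when the row-norm sequence $a_i=(\sum_j b_{ij}^2)^{1/2}$ has large dynamic range. Using the Gaussian-concentration step described above to absorb the resulting mismatch into the additive $\sqrt{p}\|B\|_p$-type contribution already appearing on the right-hand side of the asserted bound is, I expect, the most delicate part of the proof and will likely require case analysis according to which of the two terms in $R(p)$ dominates.
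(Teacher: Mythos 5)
Your lower bound is correct and is essentially the paper's own argument: the pointwise inequality $\|X\|_{S_p}^p\ge\sum_i(\sum_j X_{ij}^2)^{p/2}$ is Lemma \ref{lem:schlp} (proved there by duality on $\|XX^*\|_{S_{p/2}}$ rather than by Schur--Horn majorization, but the content is identical), and the two ensuing estimates via Jensen and via $(\mathbf{E}|g|^p)^{1/p}\asymp\sqrt p$ are exactly those in the paper. The even-integer upper bound via Theorem \ref{thm:moment} is also fine. The genuine gap is in the passage to non-integer $p$, and it is not one that concentration can close. Schatten monotonicity gives $\|X\|_{S_p}\le\|X\|_{S_{p_0}}$ with $p_0=2\lfloor p/2\rfloor\le p$, but then the \emph{leading deterministic term} produced by Theorem \ref{thm:moment}, namely $(\sum_i a_i^{p_0})^{1/p_0}$ with $a_i=(\sum_j b_{ij}^2)^{1/2}$, exceeds the target term $(\sum_i a_i^{p})^{1/p}$ by a factor as large as $n^{1/p_0-1/p}$. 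This is a discrepancy between means, not a fluctuation: for the Wigner matrix $b_{ij}\equiv 1$ and $p=3$ one has $p_0=2$, your route gives $(\mathbf{E}\|X\|_{S_2}^2)^{1/2}=n$, while the right-hand side of the corollary is of order $n^{5/6}$. Gaussian concentration compares different moments of the \emph{same} random variable $\|X\|_{S_{p_0}}$ up to an additive $C\sqrt p\max_{ij}b_{ij}$; it says nothing about the gap between $\mathbf{E}\|X\|_{S_{p_0}}$ and $\mathbf{E}\|X\|_{S_p}$, which here is a power of $n$. No case analysis on which term of $R(p)$ dominates rescues this, since the failure already occurs in the regime where the first term dominates.

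The paper closes this step by a device absent from your sketch: complex interpolation. It linearizes the problem via the map $T((a_{ij}))=(a_{ij}\tilde g_{ij})$, observes (Lemma \ref{lem:complext}) that Theorem \ref{thm:moment} makes $T$ bounded from the intersection of the three Banach lattice norms $\|(a_{ij})\|_{\ell_{2q}(\ell_2)}$, $\|(a_{ji})\|_{\ell_{2q}(\ell_2)}$, $\sqrt p\,\|(a_{ij})\|_{\ell_{2q}}$ into $L^{2q}(S_{2q})$ at the integer $q=\lceil p\rceil$ and trivially at $q=1$, and then interpolates using Maligranda's theorem that complex interpolation commutes, up to a factor $2$, with intersections of finite-dimensional Banach lattices, together with the isometric identifications $(\ell_2,\ell_{2q}(\ell_2))_\theta=\ell_{2p}(\ell_2)$ and $(L^2(S_2),L^{2q}(S_{2q}))_\theta=L^{2p}(S_{2p})$. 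Some substitute for this step --- interpolation, or a direct combinatorial argument valid at non-integer exponents --- is required; the monotonicity-plus-concentration route cannot supply it.
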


Together with Remark \ref{rem:strange} below, this concludes the first
part of Theorem \ref{thm:main}.
The remainder of this section is devoted to the proofs of these results.

The generality of Corollary \ref{cor:moment} comes at the expense of 
replacing the sharp constants of Theorem \ref{thm:moment} by larger 
universal constants. Theorem \ref{thm:moment} is therefore of independent 
interest, and will be put to good use in section \ref{sec:nong} below.

\begin{rem}
\label{rem:strange}
It may appear somewhat strange that the second term in the upper bound of 
Theorem \ref{thm:moment} is smaller than the second term in the lower 
bound of Corollary \ref{cor:moment}. It is instructive to give a direct 
proof that the two bounds are nonetheless of the same order. To this end, 
we can estimate using Young's inequality
$$
	\sum_{i,j}b_{ij}^{2p} \le
	\sum_i \Bigg(\sum_j b_{ij}^2\Bigg)
	\max_j b_{ij}^{2p-2} \le
	\frac{a^p}{p}\sum_i\Bigg(\sum_j b_{ij}^2\Bigg)^p
	+
	\frac{p-1}{pa^{p/(p-1)}}
	\sum_i \max_j b_{ij}^{2p}
$$
for any $a>0$. Therefore, setting $a=e^{-2(p-1)}$ we readily obtain
$$
	\Bigg(\sum_{i,j}b_{ij}^{2p}\Bigg)^{1/2p} \lesssim
	e^{-p}
	\Bigg(\sum_i\Bigg(\sum_j b_{ij}^2\Bigg)^p\Bigg)^{1/2p}
	+
	\Bigg(
	\sum_i \max_j b_{ij}^{2p}
	\Bigg)^{1/2p}.
$$
The equivalence of the bounds in Theorem \ref{thm:moment} and
Corollary \ref{cor:moment} (up to the value of the universal constant)
follows immediately from this estimate.
\end{rem}

\subsection{Proof of Theorem \ref{thm:moment}}

Before we begin developing the proof of Theorem~\ref{thm:moment} in 
earnest, we make a simple observation: it suffices to prove the result
under the assumption that the diagonal entries of the matrix vanish.

\begin{lem}
\label{lem:nodiag}
Let $X$ be defined as in Theorem \ref{thm:moment}, let
$D_{ij}:=X_{ij}1_{i=j}$ be its diagonal part, and denote by
$\tilde X:=X-D$ its off-diagonal part. Then
$$
	(\mathbf{E}[\mathrm{Tr}[X^{2p}]])^{1/2p} \le
	(\mathbf{E}[\mathrm{Tr}[\tilde X^{2p}]])^{1/2p} +
	\sqrt{2p} \Bigg(\sum_i b_{ii}^{2p}\Bigg)^{1/2p}.
$$
\end{lem}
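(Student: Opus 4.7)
The plan is to decouple the diagonal and off-diagonal contributions using a Minkowski-type triangle inequality, and then evaluate the diagonal term by a direct Gaussian moment computation.

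First, I would use the fact that for $p\in\mathbb{N}$, the map $M\mapsto\|M\|_{S_{2p}} = (\mathrm{Tr}[M^{2p}])^{1/(2p)}$ is a norm on symmetric matrices. Combined with Minkowski's inequality in $L^{2p}(\mathbf{P})$, the decomposition $X=\tilde X+D$ gives
$$
(\mathbf{E}[\mathrm{Tr}[X^{2p}]])^{1/(2p)}
\le (\mathbf{E}[\mathrm{Tr}[\tilde X^{2p}]])^{1/(2p)}
+ (\mathbf{E}[\mathrm{Tr}[D^{2p}]])^{1/(2p)},
$$
which isolates the desired off-diagonal piece up to a clean diagonal remainder.

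Next I would compute the diagonal term exactly. Since $D$ is diagonal, $\mathrm{Tr}[D^{2p}] = \sum_i X_{ii}^{2p} = \sum_i b_{ii}^{2p} g_{ii}^{2p}$, and independence of the $g_{ii}$ yields
$$
\mathbf{E}[\mathrm{Tr}[D^{2p}]] = \mathbf{E}[g^{2p}] \sum_i b_{ii}^{2p}.
$$
It then remains to bound the standard Gaussian moment $\mathbf{E}[g^{2p}]=(2p-1)!!$. Writing $(2p-1)!!=\prod_{k=1}^p(2k-1)\le(2p)^p$, one gets $(\mathbf{E}[g^{2p}])^{1/(2p)}\le\sqrt{2p}$, which is exactly the constant that appears in the lemma.

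There is no real obstacle here: the entire statement follows from Minkowski's inequality plus an elementary Gaussian moment bound. I would expect the proof to take just a few lines. The content of this lemma is merely to reduce the proof of Theorem \ref{thm:moment} to the off-diagonal case, where the honest combinatorial work on $\mathbf{E}[\mathrm{Tr}[\tilde X^{2p}]]$ happens.
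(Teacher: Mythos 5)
Your proposal is correct and is essentially identical to the paper's proof, which also applies the triangle inequality for the $L^{2p}(S_{2p})$ norm and then bounds $(\mathbf{E}[\mathrm{Tr}[D^{2p}]])^{1/2p}=(\sum_i\mathbf{E}[X_{ii}^{2p}])^{1/2p}\le\sqrt{2p}\,(\sum_i b_{ii}^{2p})^{1/2p}$ via the Gaussian moment estimate. Your explicit verification $((2p-1)!!)^{1/2p}\le\sqrt{2p}$ just fills in the one detail the paper leaves implicit.
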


\begin{proof}
This follows immediately using the triangle inequality and the simple 
estimate $(\mathbf{E}[\mathrm{Tr}[D^{2p}]])^{1/2p}=
(\sum_i \mathbf{E}[X_{ii}^{2p}])^{1/2p} \le
\sqrt{2p} (\sum_i b_{ii}^{2p})^{1/2p}$.
\end{proof}

In view of Lemma \ref{lem:nodiag}, it suffices to prove the result of 
Theorem \ref{thm:moment} under the assumption that $b_{ii}=0$ for all $i$ 
(with constant $4$ rather than $5$ in the second term). While this 
observation is trivial, the elimination of the diagonal entries will be 
very helpful for the implementation of the combinatorial arguments that 
are used in the proof. \emph{We therefore assume in the rest of 
this section that $b_{ii}=0$ for all $i$.}

We now turn to the main part of the proof. Our starting point is the 
identity
$$
	\mathbf{E}[\mathrm{Tr}[X^{2p}]] =
	\sum_{u_1,\ldots,u_{2p}\in[n]}
	b_{u_1u_2}b_{u_2u_3}\cdots b_{u_{2p}u_1}
	\,\mathbf{E}[g_{u_1u_2}g_{u_2u_3}\cdots g_{u_{2p}u_1}].
$$
We view $u_1\to u_2\to\cdots\to u_{2p}\to u_1$ as a cycle in the complete 
undirected graph on $n$ points. By symmetry of the
Gaussian distribution, each distinct edge $\{u_k,u_{k+1}\}$ that appears 
in the cycle must be traversed an even number of times for a term in the 
sum to be nonzero. We will call cycles with this property \emph{even}.

Following \cite{BvH16}, the \emph{shape} $\mathbf{s}(\mathbf{u})$ of a
cycle $\mathbf{u}\in[n]^{2p}$ is defined by relabeling the vertices in
order of appearance. For example, the cycle $2\to 7\to 9\to 7\to 8\to 
7\to 2$ has shape $1\to 2\to 3\to 2\to 4\to 2\to 1$. We define
$$
	\mathcal{S}_{2p}:=
	\{\mathbf{s}(\mathbf{u}):\mathbf{u}\mbox{ is an even
	cycle of length }2p\}.
$$
We denote by $n_i(\mathbf{s})$ the number of distinct edges that
are traversed exactly $i$ times by a cycle of shape $\mathbf{s}$, and
by $m(\mathbf{s})$ the number of distinct vertices visited by $\mathbf{s}$.
In terms of these quantities, we can rewrite the moments
$\mathbf{E}[\mathrm{Tr}[X^{2p}]]$ as
$$
	\mathbf{E}[\mathrm{Tr}[X^{2p}]] =
	\sum_{\mathbf{s}\in\mathcal{S}_{2p}}
	\prod_{i\ge 1} \mathbf{E}[g^{2i}]^{n_{2i}(\mathbf{s})}
	\sum_{\mathbf{u}:\mathbf{s}(\mathbf{u})=\mathbf{s}}
	b_{u_1u_2}b_{u_2u_3}\cdots b_{u_{2p}u_1},
$$
where $g\sim N(0,1)$.
The key difficulty of the proof is to obtain the following estimate.

\begin{prop}
\label{prop:holder}
For any $\mathbf{s}\in\mathcal{S}_{2p}$
$$
	\sum_{\mathbf{u}:\mathbf{s}(\mathbf{u})=\mathbf{s}}
	b_{u_1u_2}b_{u_2u_3}\cdots b_{u_{2p}u_1}
	\le
	\Bigg(\sum_i\Bigg(\sum_j b_{ij}^2\Bigg)^p\Bigg)^{\frac{m(\mathbf{s})-1}{p}}
	\Bigg(\sum_i \max_j b_{ij}^{2p}\Bigg)^{1-\frac{m(\mathbf{s})-1}{p}}.
$$
\end{prop}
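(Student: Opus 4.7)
The sum to bound rewrites as $\sum_{\mathbf{v}\in [n]^m}\prod_{e\in E(G_\mathbf{s})} b_{v_a v_b}^{\mu_e}$, where $G_\mathbf{s}$ is the reduced multigraph of the shape (with $m = m(\mathbf{s})$ vertices, each distinct edge $e = \{a,b\}$ carrying even multiplicity $\mu_e\ge 2$, and $\sum_e \mu_e = 2p$). Dropping the distinctness constraint on $\mathbf{v}$ only enlarges the sum. Writing $R_i := \sum_j b_{ij}^2$ and $M_i := \max_j b_{ij}^2$, the target is the weighted geometric mean $(\sum_i R_i^p)^{(m-1)/p}(\sum_i M_i^p)^{(p-m+1)/p}$, whose exponents sum to $1$; this is a strong hint that the proof should be a H\"older-type interpolation between the two extreme cases.

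The extremes are transparent. When $m=2$, $G_\mathbf{s}$ has a single edge of multiplicity $2p$, and $\sum_{v_1,v_2} b_{v_1 v_2}^{2p}\le \sum_{v_1} M_{v_1}^{p-1} R_{v_1} \le (\sum_i M_i^p)^{(p-1)/p}(\sum_i R_i^p)^{1/p}$ by a single H\"older step. When $m=p+1$, $G_\mathbf{s}$ is forced to be a spanning tree with every edge of multiplicity exactly $2$, and iterated leaf-peeling using $\sum_{v_\ell} b_{v_\ell v_\pi}^2 = R_{v_\pi}$ (together with a Cauchy--Schwarz step at each internal vertex) yields $\sum_i R_i^p$.

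My plan for the general case is a two-stage reduction to a tree sum, followed by a combinatorial Brascamp--Lieb-type inequality. First, pick a spanning tree $T$ of $G_\mathbf{s}$. For each non-tree edge $e = \{a,b\}$ of multiplicity $\mu_e$, commit to one endpoint $\sigma(e)\in\{a,b\}$ and use $b_{v_a v_b}^{\mu_e}\le M_{\sigma(e)}^{\mu_e/2}$. For each tree edge with $\mu_e > 2$, split $b_{v_a v_b}^{\mu_e} = b_{v_a v_b}^2 \cdot b_{v_a v_b}^{\mu_e-2}$ and bound the second factor by $M_{\sigma'(e)}^{(\mu_e-2)/2}$ for some choice $\sigma'(e)\in\{a,b\}$. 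After these reductions it suffices to bound
\[
  \sum_{\mathbf{v}} \prod_v M_v^{\gamma_v}\prod_{e\in T} b_{v_a v_b}^2,
\]
where the induced vertex weights $\gamma_v\ge 0$ satisfy $\sum_v \gamma_v = p - m + 1$.

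The heart of the proof is to show that this last display is bounded by $(\sum_i R_i^p)^{(m-1)/p}(\sum_i M_i^p)^{(p-m+1)/p}$. I would attack this by induction on $T$: at each step, peel a leaf $\ell$ with parent $\pi$ and sum out $v_\ell$ via H\"older, so that the $M_{v_\ell}^{\gamma_\ell}$ weight contributes a factor of $\|M\|_p^{\gamma_\ell}$ while the $b^2$-edge contributes a factor of $\|R\|_p$ to the appropriate exponent and updates the effective weight at $v_\pi$. This is precisely the ``Brascamp--Lieb-type'' combinatorial inequality advertised in the paper, and it is the main obstacle: naive peeling produces $\ell^{m-1}$-norms of $R$ (which are larger than the desired $\ell^p$-norms when $m-1\le p$), and the sharp exponent $(m-1)/p$ is recovered only by a carefully designed H\"older step at each peel, coupled with a judicious choice of the endpoint assignments $\sigma,\sigma'$ so that the weights $\gamma_v$ are distributed across the tree in a way compatible with the peeling.
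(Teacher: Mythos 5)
Your setup and reduction are sound and closely parallel the paper's architecture: pass to the reduced multigraph, drop the distinctness constraint, reduce to a spanning tree, and then invoke a H\"older/Brascamp--Lieb-type inequality on the tree. Your bookkeeping differs slightly from the paper's: you dispose of non-tree edges and of excess multiplicity on tree edges by the pointwise bound $b_{v_av_b}^{\mu}\le M_{v_{\sigma(e)}}^{\mu/2}$, producing vertex weights $\gamma_v$ with $\sum_v\gamma_v=p-m+1$, whereas the paper (Lemma \ref{lem:tree}) uses H\"older to fold all excess multiplicity into edge weights $k_\ell'\ge 2$ on a tree with $\sum_\ell k_\ell'=2p$. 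Both reductions are valid, and your target inequality
$$
  \sum_{\mathbf{v}}\prod_v M_{v_v}^{\gamma_v}\prod_{e\in T}b_{v_av_b}^2
  \le\Bigg(\sum_i R_i^p\Bigg)^{\frac{m-1}{p}}\Bigg(\sum_i M_i^p\Bigg)^{\frac{p-m+1}{p}}
$$
is in fact true for \emph{any} choice of the assignments $\sigma,\sigma'$ (one way to see this: attach to each vertex $v$ with $\gamma_v>0$ a phantom leaf whose edge weight matrix is $b^{(e)}_{ij}=1_{j=j^*(i)}b_{ij}^{2\gamma_v}$ with $j^*(i)$ the argmax, so that its row sums are $M_i^{\gamma_v}$, and apply Lemma \ref{lem:bizarrobl} to the augmented tree with exponents $p_e=p$ on real edges and $p_e=p/\gamma_v$ on phantom edges).

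The genuine gap is that this tree inequality --- which you correctly identify as ``the main obstacle'' --- is left unproved. It is not a routine H\"older computation: as you note, peeling a leaf naively produces $\ell_{m-1}$-norms of $R$ rather than $\ell_p$-norms, and already your extreme case $m=p+1$ (``iterated leaf-peeling together with a Cauchy--Schwarz step at each internal vertex'') is exactly the equal-exponent case of the paper's Lemma \ref{lem:bizarrobl}, whose proof requires a careful induction in which each H\"older application must preserve both the tree structure of the remaining graph and the $1$-homogeneity in every edge variable, with the exponents $q_s=\sum_{e\notin E(G_{I_s})}p_{e_s}/p_e$ tracked explicitly so that the final exponents come out to $p_e$. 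Saying that the sharp exponent ``is recovered only by a carefully designed H\"older step at each peel'' describes the difficulty without resolving it, so the heart of the proposition is missing. (Your appeal to a ``judicious choice'' of $\sigma,\sigma'$ is also a red herring: no such choice is needed, but nothing in your write-up establishes the inequality for even one choice.)
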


Proposition \ref{prop:holder} will be proved in section 
\ref{sec:holder} below. With this result in hand, we can 
readily complete the proof of Theorem \ref{thm:moment} as in \cite{BvH16}.

\begin{proof}[Proof of Theorem \ref{thm:moment}]
Define for simplicity
$$
	\sigma_p :=
	\Bigg(\sum_i\Bigg(\sum_j b_{ij}^2\Bigg)^p\Bigg)^{1/2p},
	\qquad
	\sigma_p^* := \Bigg(\sum_i \max_j b_{ij}^{2p}\Bigg)^{1/2p}.
$$
By rescaling the matrix $X$, we may assume without loss of generality
that $\sigma_p^*=1$. Then Proposition \ref{prop:holder} implies the estimate
$$
	\mathbf{E}[\mathrm{Tr}[X^{2p}]] \le
	\sum_{\mathbf{s}\in\mathcal{S}_{2p}}
	\sigma_p^{2(m(\mathbf{s})-1)}
	\prod_{i\ge 1} \mathbf{E}[g^{2i}]^{n_{2i}(\mathbf{s})}.
$$
On the other hand, let $Y$ be an $r\times r$ symmetric matrix
whose entries $Y_{ij}$ are i.i.d.\ standard Gaussian variables for $i\ge j$.
Then we have
\begin{align*}
	\mathbf{E}[\mathrm{Tr}[Y^{2p}]] &=
	\sum_{\mathbf{s}\in\mathcal{S}_{2p}}
	\card\{\mathbf{u}\in[r]^{2p}:\mathbf{s}(\mathbf{u})=\mathbf{s}\}
	\prod_{i\ge 1} \mathbf{E}[g^{2i}]^{n_{2i}(\mathbf{s})}
	\\
	&=\sum_{\mathbf{s}\in\mathcal{S}_{2p}}
	\frac{r!}{(r-m(\mathbf{s}))!}
	\prod_{i\ge 1} \mathbf{E}[g^{2i}]^{n_{2i}(\mathbf{s})}
\end{align*}
provided $p<r$ (which ensures that $r\ge m(\mathbf{s})$, as
$m(\mathbf{s})\le p+1$ for any even cycle).
As $(r-1)!/(r-m)!\ge (r-m+1)^{m-1}$ and using $m(\mathbf{s})\le p+1$,
we obtain
$$
	\mathbf{E}[\mathrm{Tr}[X^{2p}]] \le
	\frac{1}{r}\mathbf{E}[\mathrm{Tr}[Y^{2p}]]
	\le \mathbf{E}[\|Y\|^{2p}]
	\quad\mbox{for}\quad
	r=\lfloor\sigma_p^2\rfloor+p+1.
$$
We now invoke a standard bound on the norm of
Wigner matrices \cite[Lemma 2.2]{BvH16}
$$
	(\mathbf{E}[\mathrm{Tr}[X^{2p}]])^{1/2p} \le
	\mathbf{E}[\|Y\|^{2p}]^{1/2p} \le
	2\sqrt{r}+2\sqrt{2p} \le
	2\sigma_p + 4\sqrt{2p}
$$
to conclude the proof.
\end{proof}

\begin{rem}
Instead of Proposition \ref{prop:holder}, the following much simpler 
estimate 
$$
	\sum_{\mathbf{u}:\mathbf{s}(\mathbf{u})=\mathbf{s}}
	b_{u_1u_2}b_{u_2u_3}\cdots b_{u_{2p}u_1}
	\le
	n\,\Bigg(\max_i\sum_j b_{ij}^2\Bigg)^{m(\mathbf{s})-1}
	\Bigg(\max_{ij} b_{ij}^2\Bigg)^{p-m(\mathbf{s})+1}
$$
was obtained in \cite[Lemma 2.5]{BvH16}. Indeed, this estimate is an 
almost immediate consequence of the fact that every edge of an even cycle
is traversed at least twice. However, when combined with the rest of the 
argument, this estimate gives rise to a dimension-dependent bound on the 
Schatten norms
$$
	(\mathbf{E}[\mathrm{Tr}[X^{2p}]])^{1/2p} \lesssim
	n^{1/2p}\Bigg(
	\max_i\sqrt{\sum_j b_{ij}^2} + \max_{ij}b_{ij}\sqrt{p}\Bigg).
$$
The latter coincides with the optimal bound of Theorem \ref{thm:moment} 
only when $p\gtrsim\log n$. As we will presently see, the case $p\ll\log 
n$ is much more delicate, and the key feature of Proposition 
\ref{prop:holder} is that it gives the right dimension-free estimate.
\end{rem}

\subsection{Proof of Proposition \ref{prop:holder}}
\label{sec:holder}

To complete the proof of Theorem \ref{thm:moment}, it remains to prove the 
key estimate of Proposition \ref{prop:holder}. Its innocent-looking 
statement suggests that it should arise as an application of H\"older's 
inequality, and this is in essence the case. Nonetheless, we do not know a 
short proof of this fact. The difficulty in proving this result is that 
the manner in which H\"older's inequality must be applied depends 
nontrivially on the topology of the shape $\mathbf{s}$, and it is unclear 
at the outset how to identify the relevant structure. To facilitate the 
analysis, it will be convenient to place ourselves in a somewhat more 
general framework.

Let $\mathcal{G}_m$ be the set of all undirected, connected graphs 
$G=([m],E(G))$. For any edge $e=\{i,j\}$ of a graph $G\in\mathcal{G}_m$ 
and $\mathbf{v}=(v_i)_{i\in[m]}\in [n]^m$, we will write 
$v(e):=\{v_i,v_j\}$. The key quantity that we will investigate is
the following.

\begin{defn}
\label{defn:wg}
For any graph $G\in\mathcal{G}_m$, and any family $\mathbf{k}:=(k_e)_{e\in 
E(G)}$ of labelings of its edges by positive values $k_e>0$, we define
$$
	W^{\mathbf{k}}(G) := 
	\sum_{\mathbf{v}\in[n]^m}\prod_{e\in E(G)} b_{v(e)}^{k_e}.
$$
\end{defn}

Definition \ref{defn:wg} is more general than the quantities that appear
in Proposition \ref{prop:holder}. Indeed, given any shape 
$\mathbf{s}$ of length $2p$ with $m(\mathbf{s})=m$ distinct vertices, 
define a graph $G\in\mathcal{G}_m$ whose edges are given by
$E(G)=\{\{s_1,s_2\},\{s_2,s_3\},\ldots,\{s_{2p},s_1\}\}$, and let
$k_e$ be the number of times the edge $e\in E(G)$ is traversed by 
$\mathbf{s}$. Then clearly
$$
	\sum_{\mathbf{u}:\mathbf{s}(\mathbf{u})=\mathbf{s}}
	b_{u_1u_2}b_{u_2u_3}\cdots b_{u_{2p}u_1}
	=
	\sum_{v_1\ne v_2\ne\cdots\ne v_m}\prod_{e\in E(G)} b_{v(e)}^{k_e}
	\le W^{\mathbf{k}}(G).
$$
The conclusion of Proposition \ref{prop:holder} will therefore
follow immediately from the following more general statement about
the quantity $W^{\mathbf{k}}(G)$.

\begin{thm}
\label{thm:holder}
For any $G\in\mathcal{G}_m$ and $\mathbf{k}$ so that
$k_e\ge 2$ for every $e\in E(G)$, we have
$$
	W^{\mathbf{k}}(G) \le
	\Bigg(\sum_i\Bigg(\sum_j b_{ij}^2\Bigg)^{\frac{|\mathbf{k}|}{2}}
	\Bigg)^{\frac{2(m-1)}{|\mathbf{k}|}}
	\Bigg(\sum_i \max_j b_{ij}^{|\mathbf{k}|}\Bigg)^{1-
		\frac{2(m-1)}{|\mathbf{k}|}},
$$
where we denote $|\mathbf{k}|:=\sum_{e\in E(G)}k_e$.
\end{thm}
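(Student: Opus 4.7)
My plan is to prove Theorem \ref{thm:holder} by induction on the number of vertices $m$, using H\"older's inequality at each step. Writing $R_i := \sum_j b_{ij}^2$ and $M_i := \max_j b_{ij}$, the target bound is
\[
  W^{\mathbf{k}}(G) \le \Bigl(\sum_i R_i^{|\mathbf{k}|/2}\Bigr)^{2(m-1)/|\mathbf{k}|} \Bigl(\sum_i M_i^{|\mathbf{k}|}\Bigr)^{1 - 2(m-1)/|\mathbf{k}|}.
\]
Since $G$ is connected we have $|E(G)| \ge m-1$, and combined with $k_e \ge 2$ this gives $|\mathbf{k}| \ge 2(m-1)$, so the exponent on the first factor lies in $[0,1]$ and the bound is a convex combination, the natural format for H\"older. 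For the base case $m=2$, $G$ is a single edge with weight $k\ge 2$, and the inequality reads $\sum_{i,j} b_{ij}^k \le (\sum_i R_i^{k/2})^{2/k}(\sum_i M_i^k)^{(k-2)/k}$. It follows by two successive applications of H\"older: the estimate $b_{ij}^k = b_{ij}^2 \cdot b_{ij}^{k-2}\le b_{ij}^2 M_i^{k-2}$ gives $\sum_{ij} b_{ij}^k \le \sum_i R_i M_i^{k-2}$, and then H\"older with exponents $k/2$ and $k/(k-2)$ on the $i$-sum closes the estimate.

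For the inductive step, I would choose a vertex $v^*\in[m]$ such that $G-v^*$ remains connected; such a vertex exists because any connected graph on at least two vertices has at least two non-cut vertices. Let $u_1,\ldots,u_d$ be the neighbors of $v^*$ in $G$, with edge weights $k_1,\ldots,k_d$. Integrating $v_{v^*}$ out first via $b_{v_{u_i},j}^{k_i}\le b_{v_{u_i},j}^2 M_{v_{u_i}}^{k_i-2}$, and bounding all but one of the $b^2$ factors by $M^2$, yields
\[
  \sum_j \prod_{i=1}^d b_{v_{u_i},j}^{k_i} \le R_{v_{u_1}}\, M_{v_{u_1}}^{k_1-2}\prod_{i\ge 2} M_{v_{u_i}}^{k_i}.
\]
Substituting back, $W^{\mathbf{k}}(G)$ is bounded by a sum over $[n]^{m-1}$ of a product of edge weights on $G-v^*$ together with vertex weights of the form $R_{v_w}^{\alpha_w} M_{v_w}^{\beta_w}$ accumulated on the former neighbors of $v^*$.

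This forces the inductive hypothesis to be strengthened to a \emph{generalized statement} in which each vertex $w$ carries an extra factor $R_{v_w}^{\alpha_w} M_{v_w}^{\beta_w}$, and the right-hand side has its $R$- and $M$-exponents adjusted by the totals of vertex weights as well as edge weights. Each peeling step contributes one $R$-factor of weight $2$ and $M$-factors of total weight equal to the edge-weight removed, so after $m-1$ peels the accumulated $R$-weight is $2(m-1)$ and the $M$-weight is $|\mathbf{k}|-2(m-1)$, exactly matching the exponents in the target bound. The main obstacle will be finding the right generalized inductive statement and verifying that the peeling step preserves its form without any loss of constants; in particular, the choice of which neighbor plays the role of $u_1$ (the one carrying the $R$-factor) interacts with the accumulated weights and must be made consistently, and the connectedness of $G-v^*$ is essential so the reduced problem is of the same form. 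A final H\"older step after the peeling consolidates the accumulated weights into the desired bound.
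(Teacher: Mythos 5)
Your base case is fine, your weight accounting (total $R$-weight $2(m-1)$, $M$-weight $|\mathbf{k}|-2(m-1)$) is correct, and for \emph{trees} your scheme genuinely works: there every non-cut vertex is a leaf, so $d=1$ at each peel, and the final consolidation is a legitimate H\"older step since $\sum_i R_i^{m-1}M_i^{|\mathbf{k}|-2(m-1)}=\sum_i(R_i^{|\mathbf{k}|/2})^{2(m-1)/|\mathbf{k}|}(M_i^{|\mathbf{k}|})^{1-2(m-1)/|\mathbf{k}|}$ has H\"older exponents summing to $1$. The genuine gap is the case of graphs with cycles, where a non-cut vertex has degree $d\ge 2$ and your decoupling bound $\sum_j\prod_i b_{v_{u_i}j}^{k_i}\le R_{v_{u_1}}M_{v_{u_1}}^{k_1-2}\prod_{i\ge2}M_{v_{u_i}}^{k_i}$ is irreparably lossy: it replaces the coupled quantity $\sum_j b_{v_{u_1}j}^{k_1}b_{v_{u_2}j}^{k_2}$ by a product of per-vertex factors before the outer sums are performed. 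Test this on $G=K_3$ with all $k_e=2$, where the target is $(\sum_iR_i^3)^{2/3}(\sum_iM_i^6)^{1/3}$. Your first peel gives $W\le\sum_{v_1,v_2}b_{v_1v_2}^2R_{v_1}M_{v_2}^2$, and the second peel must bound $\sum_{v_2}b_{v_1v_2}^2M_{v_2}^2$ by (a function of $v_1$)$\times$(a global factor); the scheme then necessarily terminates with (global factor)$\times\sum_i R_i^\alpha M_i^\beta$ for a single unweighted sum over $i$. For a permutation (involution) coefficient matrix one has $R_i=M_i=1$, so the terminal sum equals $n$ while the target is $(\,n\,)^{2/3}(\,n\,)^{1/3}=n$; the global factor $(\sum_jM_j^{2q})^{1/q}=n^{1/q}$ then forces $q=\infty$, i.e.\ $W\le\max_jM_j^2\sum_iR_i^2$. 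But that bound fails against the target for a matrix consisting of one unit entry $b_{12}=b_{21}=1$ plus a spread-out block $b_{ij}=(n-2)^{-1/2}$ on $\{3,\dots,n\}$: there $\max_jM_j^2\sum_iR_i^2\asymp n$ while the target is $\asymp n^{2/3}$. No fixed choice of H\"older exponents in your peeling survives both examples, so the ``generalized inductive statement'' you defer cannot exist in the per-vertex product form you propose; this is not a bookkeeping issue but a failure of the decoupling step itself.

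The paper circumvents exactly this obstacle by a preliminary reduction to trees (Lemma \ref{lem:tree}): H\"older is applied to the \emph{product over the edges incident to the removed vertex}, $\prod_{\{i,m\}\in E(G)}(\cdots)^{k(i)/k_m'}$, which merges those edges into a single edge of weight $k_m'=\sum_ik_{\{i,m\}}$ attached to one neighbor, with the maximum over attachment points taken \emph{outside} the $\mathbf{v}$-summation. In the $K_3$ example this replaces your $R_{v_1}M_{v_2}^2$ by $\max_{i_3}\sum_jb_{v_{i_3}j}^4$ and leads to $\sum_iR_i\sum_jb_{ij}^4$, which does match the target. Only after this reduction does the paper peel leaves (Lemma \ref{lem:bizarrobl}), keeping the intermediate quantities in the form $\sum_i(\sum_jb_{ij}^{k})^{q}$ and postponing the split into the $R$- and $M$-sums to one H\"older application at the very end. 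Your tree argument is a reasonable, slightly more eager variant of that second stage; what is missing, and cannot be replaced by your peeling, is the first stage.
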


The remainder of this section is devoted to the proof of this result.

\subsubsection{Reduction to trees}

As a first step towards the proof of Theorem \ref{thm:holder}, we will 
show that it suffices to prove the result in the case where $G$ is a tree; 
the special structure of trees will be heavily exploited in the rest
of the proof.

In the sequel, we denote by $\mathcal{G}_m^{\rm tree}$ the set of trees 
on $m$ vertices, that is, the set of graphs $G\in\mathcal{G}_m$ such that 
$\card E(G)=m-1$. We will also denote by $G_I$ the subgraph induced by a 
graph $G\in\mathcal{G}_m$ on a subset $I\subseteq[m]$ of its vertices.

When $G\in\mathcal{G}_m^{\rm tree}$, the quantity $W^{\mathbf{k}}(G)$
takes a particularly simple form. Indeed, suppose without loss of 
generality that the vertices $[m]$ are ordered so that $m$ is a leaf of
$G$ and every $\ell<m$ is a leaf of the induced subtree $G_{[\ell]}$.
Then the tree is rooted at vertex $1$, and every vertex $\ell\ge 2$ has a 
unique parent vertex $i_\ell\le \ell-1$. In particular, then
$E(G)=\{\{1,2\},\{i_3,3\},\ldots,\{i_m,m\}\}$, so we can write
$$
	W^{\mathbf{k}}(G) =
	\sum_{v_1,\ldots,v_m\in[n]}
	b_{v_1v_2}^{k_2}b_{v_{i_3}v_3}^{k_3}\cdots
	b_{v_{i_m}v_m}^{k_m},
$$
where we denoted $k_{\{i_\ell,\ell\}} =: k_{\ell}$ for simplicity.
Conversely, for any $k_2,\ldots,k_m>0$ and
$i_\ell\le \ell-1$, the expression on the right-hand side arises as
$W^{\mathbf{k}}(G)$ for some $G\in\mathcal{G}_m^{\rm tree}$ (indeed,
one may generate any tree by starting at the root and repeatedly 
attaching a new vertex $\ell$ to a previously generated vertex
$i_\ell$).

We now show that among all graphs $G\in\mathcal{G}_m$, the value 
of $W^{\mathbf{k}}(G)$ is maximized by trees. This will allow us to 
restrict attention to trees in the rest of the proof.

\begin{lem}
\label{lem:tree}
For any $G\in\mathcal{G}_m$ and $\mathbf{k}=(k_e)_{e\in E(G)}$ with
$k_e>0$, there exist
$k_2',\ldots,k_m'>0$ with $\min_ik_i'\ge \min_ek_e$ and
$\sum_{i=2}^mk_i'=|\mathbf{k}|$ such that
$$
	W^{\mathbf{k}}(G) \le
	\max_{i_3,\ldots,i_m\in[m]:i_\ell\le \ell-1}
	\sum_{v_1,\ldots,v_m\in[n]}
        b_{v_1v_2}^{k_2'}b_{v_{i_3}v_3}^{k_3'}\cdots
        b_{v_{i_m}v_m}^{k_m'} \le
	\max_{G'\in\mathcal{G}_m^{\rm tree}}
	W^{\mathbf{k'}}(G').
$$
\end{lem}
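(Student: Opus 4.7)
The plan is to induct on $|E(G)|$. When $G$ is itself a tree (the base case $|E(G)|=m-1$), order its vertices $1,2,\ldots,m$ via BFS/DFS rooted at $1$ so that each $\ell\ge 2$ has a unique parent $i_\ell\le\ell-1$; setting $k'_\ell:=k_{\{i_\ell,\ell\}}$ yields equality in $W^{\mathbf{k}}(G)\le(\text{middle})$, and the minimum/sum conditions on $\mathbf{k'}$ are trivially satisfied. The second inequality in the display is tautological: each admissible $(i_3,\ldots,i_m)$ defines a tree in $\mathcal{G}_m^{\rm tree}$ with edges $\{\{1,2\},\{i_3,3\},\ldots,\{i_m,m\}\}$, so the middle maximum is bounded above by the maximum over all trees in $\mathcal{G}_m^{\rm tree}$.

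For the inductive step, suppose $|E(G)|\ge m$ and pick any vertex $a$ lying on a cycle of $G$. Let $C_1,\ldots,C_s$ denote the connected components of $G-a$. The naive strategy of using H\"older at vertex $a$ to replace the $\deg_G(a)\ge 2$ edges incident to $a$ by a single edge of weight $K_a:=\sum_{e\ni a}k_e$ fails whenever $a$ is a cut vertex, since the resulting graph then has isolated vertices which introduce a dimension-dependent factor of $n$ and destroy the bound. \emph{This is the main obstacle.} The fix is to collapse neighbors of $a$ separately within each component. Write $N_i:=N_G(a)\cap V(C_i)$, $K_a^i:=\sum_{j\in N_i}k_{\{a,j\}}$, $\alpha_j^i:=k_{\{a,j\}}/K_a^i$, and apply the weighted AM--GM inequality within each component,
$$
\prod_{j\in N_i}b_{v_av_j}^{k_{\{a,j\}}}\le\sum_{j\in N_i}\alpha_j^i\,b_{v_av_j}^{K_a^i}.
$$
Taking the product over $i$, summing over $v_a$, and folding in the edges of $G$ disjoint from $a$, I obtain
$$
W^{\mathbf{k}}(G)\le\sum_{\vec j}\Bigl(\prod_i\alpha_{j_i}^i\Bigr)\,W^{\mathbf{k}^{(\vec j)}}(G_{\vec j}),
$$
where $\vec j=(j_i)_{i=1}^s$ ranges over choices of one representative per component, and $G_{\vec j}$ is the graph obtained from $G$ by replacing all edges incident to $a$ with the $s$ edges $\{a,j_i\}$ of weights $K_a^i$. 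Since $\sum_{\vec j}\prod_i\alpha_{j_i}^i=1$, the convex combination is dominated by $\max_{\vec j}W^{\mathbf{k}^{(\vec j)}}(G_{\vec j})$.

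Each $G_{\vec j}$ is connected, since vertex $a$ is now joined by an edge to every component of $G-a$, hence $G_{\vec j}\in\mathcal{G}_m$. Moreover, $a$ lying on a cycle forces at least one $N_i$ to contain two neighbors of $a$ (the two cycle-endpoints at $a$ lie in a common component of $G-a$), so $\deg_G(a)>s$ and $|E(G_{\vec j})|=|E(G)|-\deg_G(a)+s<|E(G)|$. The new weights $\mathbf{k}^{(\vec j)}$ sum to $|\mathbf{k}|$, and $\min_e k_e^{(\vec j)}\ge\min_ek_e$ since each $K_a^i\ge\min_{j\in N_i}k_{\{a,j\}}\ge\min_ek_e$. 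Applying the inductive hypothesis to the maximizing $G_{\vec j^*}$ with weights $\mathbf{k}^{(\vec j^*)}$ then produces $k_2',\ldots,k_m'$ satisfying the required sum and minimum conditions and bounds $W^{\mathbf{k}^{(\vec j^*)}}(G_{\vec j^*})$ by the middle expression; this bound transfers to $W^{\mathbf{k}}(G)$, completing the induction.
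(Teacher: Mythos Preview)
Your proof is correct, but the route differs from the paper's. The paper does not induct on $|E(G)|$; instead it fixes a spanning tree $T$ of $G$, orders the vertices so that each $G_{[\ell]}$ is connected, and defines the $k_\ell'$ \emph{explicitly} as $k_\ell' := \sum_{e=\{i,\ell\}\in E(G_{[\ell]})} k_e$ (the total weight of edges from $\ell$ to lower-indexed vertices). It then peels off vertices $m,m-1,\ldots,3$ one at a time: at each stage it applies H\"older's inequality with exponents $k_\ell'/k(i)$ to collapse all edges from the current top vertex $\ell$ into a single edge of weight $k_\ell'$, and bounds the resulting weighted geometric mean by the maximum over the choice of parent $i_\ell$. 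Connectivity of $G_{[\ell]}$ is guaranteed by the spanning-tree ordering, so the cut-vertex obstacle you identified never arises.

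Your approach, by contrast, leaves the $k_\ell'$ implicit (they depend on the sequence of maximizers $\vec j^*$) and addresses the cut-vertex issue head-on by collapsing edges separately within each component of $G-a$. This is a genuinely different and perfectly valid organization; both arguments rest on the same weighted AM--GM/H\"older step. The paper's version has the advantage of giving an explicit formula for $\mathbf{k}'$ and a single linear pass through the vertices, while yours has the advantage of making the reduction-to-trees structure more transparent and modular.
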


\begin{proof}
Let $T$ be a spanning tree of $G$, and assume that the vertices $[m]$ of 
$G$ are ordered so that $m$ is a leaf of $T$ and every $\ell<m$ is a leaf 
of $T_{[\ell]}$ (this entails no loss of generality, as this can always be 
accomplished by relabeling the vertices of $G$). The only point of this 
assumption is that it ensures that the subgraph $G_{[\ell]}$ is connected 
for every $\ell$. We now define the numbers $k_2',\ldots,k_m'$ as
$$
	k_\ell' :=
	\sum_{e=\{i,\ell\}\in E(G_{[\ell]})} k_e.
$$
That is, $k_\ell'$ is the total weight of edges incident to $\ell$ in 
$G_{[\ell]}$. It is clear from this definition that 
$\min_\ell k_\ell'\ge\min_ek_e$ and $\sum_\ell k_\ell'=|\mathbf{k}|$.

The main part of the proof proceeds by induction. For the initial step, we 
begin by isolating the contribution of the vertex $m$ in the definition of 
$W^{\mathbf{k}}(G)$:
\begin{align*}
	W^{\mathbf{k}}(G) &=
	\sum_{\mathbf{v}\in[n]^m}
	\Bigg(
	\prod_{e\in E(G_{[m-1]})}
	b_{v(e)}^{k_e}
	\Bigg)\Bigg(
	\prod_{\{i,m\}\in E(G)} b_{v_iv_m}^{k(i)}
	\Bigg) \\
	&=
	\sum_{\mathbf{v}\in[n]^m}
	\prod_{\{i,m\}\in E(G)} 
	\Bigg(
	b_{v_iv_m}^{k_m'}
	\prod_{e\in E(G_{[m-1]})}
	b_{v(e)}^{k_e}
	\Bigg)^{k(i)/k_m'},
\end{align*}
where we denote $k_{\{i,m\}}=:k(i)$ and we observe that
$\sum_i k(i) = k_m'$. Therefore
\begin{align*}
	W^{\mathbf{k}}(G) &\le
	\prod_{\{i,m\}\in E(G)} 
	\Bigg(
	\sum_{\mathbf{v}\in[n]^m}
	b_{v_iv_m}^{k_m'}
	\prod_{e\in E(G_{[m-1]})}
	b_{v(e)}^{k_e}
	\Bigg)^{k(i)/k_m'}
	\\ &\le
	\max_{i_m\le m-1}
	\sum_{\mathbf{v}\in[n]^m}
	b_{v_{i_m}v_m}^{k_m'}
	\prod_{e\in E(G_{[m-1]})}
	b_{v(e)}^{k_e}
\end{align*}
by H\"older's inequality.

The argument for the inductive step proceeds along very similar lines. 
Suppose we have shown, for some $4\le r\le m$, the induction hypothesis
$$
	W^{\mathbf{k}}(G) \le
	\max_{i_r,\ldots,i_m:i_\ell\le \ell-1}
	\sum_{\mathbf{v}\in[n]^m}
	b_{v_{i_r}v_r}^{k_r'}\cdots
	b_{v_{i_m}v_m}^{k_m'}
	\prod_{e\in E(G_{[r-1]})}
	b_{v(e)}^{k_e}.
$$
By our assumption on the ordering of the vertices, $G_{[r-1]}$ is 
connected. In particular, this means that there exists at least one edge
between vertex $r-1$ and $[r-2]$. Isolating these edges
in the same manner as above yields
\begin{align*}
&	\sum_{\mathbf{v}\in[n]^m}
	b_{v_{i_r}v_r}^{k_r'}\cdots
	b_{v_{i_m}v_m}^{k_m'}
	\prod_{e\in E(G_{[r-1]})}
	b_{v(e)}^{k_e} = \mbox{}\\
&	\sum_{\mathbf{v}\in[n]^m}
	\prod_{\{i,r-1\}\in E(G_{[r-1]})}
\!\!
	\Bigg(
	b_{v_iv_{r-1}}^{k_{r-1}'}
	b_{v_{i_r}v_{r}}^{k_r'}\cdots
	b_{v_{i_m}v_{m}}^{k_m'}
	\prod_{e\in E(G_{[r-2]})}
	b_{v(e)}^{k_e}
	\Bigg)^{k(i)/k_{r-1}'},
\end{align*}
where we now redefined $k_{\{i,r-1\}}=:k(i)$. Applying
H\"older's inequality once more establishes the validity of the
induction hypothesis for $r\leftarrow r-1$.

The above induction guarantees the validity of the induction hypothesis
for $r=3$. This completes the proof, however, as $G_{[2]}$ contains the
single edge $\{1,2\}$.
\end{proof}

\subsubsection{Iterative pruning}

By virtue of Lemma \ref{lem:tree}, we have now reduced the proof of 
Theorem \ref{thm:holder} to the special case where the graph $G$ is a 
tree. To complete the proof, we will iteratively apply H\"older's 
inequality to the leaves of the tree. Unlike in the proof of Lemma 
\ref{lem:tree}, however, it is important in the present case to keep track 
of the powers of the different terms generated by H\"older's inequality, 
which introduces additional complications. To facilitate the requisite 
bookkeeping, it will be convenient to consider a further generalization of 
the quantity $W^{\mathbf{k}}(G)$.

In the following, let us suppose that each edge $e\in E(G)$ may be endowed 
with an independent (symmetric) weight matrix $b^{(e)}_{ij}$, and define
$$
	W(G) := 
	\sum_{\mathbf{v}\in[n]^m}
	\prod_{e\in E(G)} b_{v(e)}^{(e)}.
$$
We will recover $W(G)=W^{\mathbf{k}}(G)$ by setting $b^{(e)}_{ij}=b_{ij}^{k_e}$.

\begin{lem}
\label{lem:bizarrobl}
For any $G\in\mathcal{G}_m^{\rm tree}$ and $p_e\ge 1$ such that
$\sum_{e\in E(G)} 1/p_e=1$, we have
$$
	W(G) \le
	\prod_{e\in E(G)}
	\Bigg(
	\sum_i\Bigg(
	\sum_j b_{ij}^{(e)}\Bigg)^{p_e}
	\Bigg)^{1/p_e}.
$$
\end{lem}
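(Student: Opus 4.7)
The proof proceeds by induction on the number $m$ of vertices of the tree; throughout I write $A^{(e)}(k) := \sum_j b^{(e)}_{kj}$, so that $\|A^{(e)}\|_{p_e}$ is precisely the factor on the right-hand side. The base case $m=2$ forces $p_e=1$ for the unique edge, and the claim reduces to the identity $W(G) = \sum_{i,j}b^{(e)}_{ij} = \|A^{(e)}\|_1$.

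For the inductive step with $m\ge 3$, pick any leaf $\ell$ of $G$, let $i$ be its parent, and let $e_0 = \{i,\ell\}$. Since $m\ge 3$, vertex $i$ has another neighbor $i'$ via some edge $e_1 = \{i,i'\}$. First sum out $v_\ell$, which produces a factor $A^{(e_0)}(v_i)$ in front of the product over the remaining edges. Now \emph{absorb} this residual into the adjacent edge $e_1$ by defining
$$
        \tilde b^{(e_1)}_{v_i,v_{i'}} := A^{(e_0)}(v_i)\,b^{(e_1)}_{v_i,v_{i'}}.
$$
This rewrites $W(G)$ as the analogous sum on the reduced tree $G' := G - \ell$ with modified weight $\tilde b^{(e_1)}$ on edge $e_1$ and the original matrices on all other edges.

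To apply the inductive hypothesis on $G'$, introduce the retuned exponent
$$
        \tfrac{1}{\tilde p_{e_1}} := \tfrac{1}{p_{e_0}} + \tfrac{1}{p_{e_1}}
$$
and keep $\tilde p_e := p_e$ for every other edge of $G'$. Since $\sum_e 1/p_e = 1$ and $|E(G)|\ge 2$, we have $1/p_{e_0}+1/p_{e_1}\le 1$, so $\tilde p_{e_1}\ge 1$, and $\sum_{e\in E(G')}1/\tilde p_e = 1$ by construction. The inductive hypothesis then yields
$$
        W(G) \le \|\tilde A^{(e_1)}\|_{\tilde p_{e_1}}\prod_{e\in E(G)\setminus\{e_0,e_1\}}\|A^{(e)}\|_{p_e},
$$
where $\tilde A^{(e_1)}(v_i) = \sum_{v_{i'}} \tilde b^{(e_1)}_{v_i,v_{i'}} = A^{(e_0)}(v_i)\,A^{(e_1)}(v_i)$. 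One final application of H\"older's inequality with conjugate exponents $p_{e_0}/\tilde p_{e_1}$ and $p_{e_1}/\tilde p_{e_1}$, whose reciprocals sum to $\tilde p_{e_1}(1/p_{e_0}+1/p_{e_1})=1$ by the very choice of $\tilde p_{e_1}$, gives
$$
        \|A^{(e_0)} A^{(e_1)}\|_{\tilde p_{e_1}} \le \|A^{(e_0)}\|_{p_{e_0}}\|A^{(e_1)}\|_{p_{e_1}},
$$
and substitution closes the induction.

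The main obstacle, and the reason a one-shot application of H\"older does not work, is that the naive generalized H\"older bound on $\sum_{\mathbf{v}}\prod_e b^{(e)}_{v(e)}$ spends a factor $n^{m-2}$ on integrating variables that do not appear in each edge. The absorption trick repairs this: by merging the leaf residual $A^{(e_0)}(v_i)$ into $b^{(e_1)}$ and simultaneously replacing $p_{e_1}$ by the harmonic-mean exponent $\tilde p_{e_1}$, the contribution of edge $e_0$ is transferred onto edge $e_1$ with no dimensional loss, and the tree shrinks by one vertex so the induction proceeds. A minor technical point is that $\tilde b^{(e_1)}$ is not symmetric even when $b^{(e_1)}$ is; but the argument only uses non-negativity of the entries, with $A^{(e)}$ interpreted as the sum over the second index, so the inductive hypothesis applies without change.
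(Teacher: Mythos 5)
Your induction does not close: the inductive hypothesis you invoke on the pruned tree is false in the form you need it. After absorbing $A^{(e_0)}(v_i)$ into $b^{(e_1)}$, the matrix $\tilde b^{(e_1)}$ is asymmetric, and the mixed norm $\sum_{v_i}(\sum_{v_{i'}}\cdot)^{p}$ is genuinely orientation-dependent; your final H\"older step forces the outer ($\ell_{\tilde p_{e_1}}$) index to sit at the absorbing vertex $i$, but whenever $\deg_G(i)=2$ the vertex $i$ becomes a leaf of $G'$ and the correct direction of H\"older on $G'$ wants the outer index at $i'$. Concretely, take the path $1-2-3-4$ with $p_e=3$ for all three edges and symmetric weights $b^{(e_0)}_{xy}=\tfrac{1}{n-1}1_{x\ne 1}1_{y\ne 1}$, $b^{(e_1)}_{yz}=1_{\{y=1\ \mathrm{or}\ z=1\}}$, $b^{(e_2)}_{zw}=1_{z=1}1_{w=1}$. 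Removing leaf $1$ gives $A^{(e_0)}(v_2)=1_{v_2\ne1}$, hence $\tilde b^{(e_1)}_{v_2v_3}=1_{v_2\ne1}1_{v_3=1}$ and $\tilde p_{e_1}=3/2$; then $\tilde A^{(e_1)}(v_2)=1_{v_2\ne 1}$, so the inequality you need from the inductive hypothesis reads $W(G')\le \|\tilde A^{(e_1)}\|_{3/2}\,\|A^{(e_2)}\|_{3}=(n-1)^{2/3}$, whereas $W(G')=n-1$. (The final bound you would output is still true, as it must be, but the derivation passes through a false statement.)

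This orientation issue is not a minor technical point; it is the actual difficulty of the lemma, and it is why one application of H\"older per leaf cannot suffice. Once an internal edge has had mass absorbed at both of its endpoints, its row sums no longer factor as $A^{(e_0)}\cdot A^{(e_1)}$, and you are left with a genuine trilinear form $\sum_{i,j}f(i)\,b_{ij}\,g(j)$ whose bound by $\|f\|_{p_1}\|A\|_{p_2}\|g\|_{p_3}$ is again an instance of the lemma rather than of H\"older. The paper's proof avoids absorption altogether: when a leaf is pruned, the factor $\big(\sum_j b^{(e_\ell)}_{v_{i_\ell}j}\big)$ is kept as a standalone term attached to the vertex $i_\ell$, and H\"older is used to \emph{split the whole sum into a product of two sums}, each carrying exactly one such standalone factor raised to an accumulated power $q_s$; iterating yields a product of many terms whose exponents are only identified at the end via a homogeneity argument. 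To salvage your scheme you would have to prove the asymmetric statement you are implicitly assuming, and as the example shows, that statement is false.
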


This H\"older-type inequality is reminiscent of a special Brascamp-Lieb 
inequality (see, for example, \cite{BCCT10} and the references therein), 
but involving mixed $\ell_p(\ell_1)$ norms. We do not know whether it 
follows from a more general principle.

\begin{proof}
Throughout the proof we will assume without loss of generality that 
$m>2$, as the conclusion is trivial in the case $m=2$.

The proof again proceeds by induction. For the initial step, we begin by 
noting that any finite tree $G$ must have at least two leaves (that is,
vertices that have exactly one neighbor). Suppose that vertices 
$\ell,\ell'$ are leaves of $G$. Then
$$
        W(G) =
        \sum_{\mathbf{v}\in[n]^{I}}
	\Bigg(
	\sum_j b_{v_{i_\ell}j}^{(e_\ell)}
	\Bigg)
	\Bigg(
	\sum_j b_{v_{i_{\ell'}}j}^{(e_{\ell'})}
	\Bigg)
        \prod_{e\in E(G_{I})}b_{v(e)}^{(e)},
$$
where $I=[m]\backslash\{\ell,\ell'\}$ and $e_\ell=\{i_\ell,\ell\}$, 
$e_{\ell'}=\{i_{\ell'},\ell'\}$ are the unique edges connecting the leaves
$\ell,\ell'$ to $I$ (here we used that as $m>2$, the set $I$ is nonempty 
and $e_\ell\ne e_{\ell'}$). We can 
therefore estimate by H\"older's inequality
\begin{align*}
	W(G) \le\mbox{} &
	\Bigg[
        \sum_{\mathbf{v}\in[n]^{I}}
	\Bigg(
	\sum_j b_{v_{i_\ell}j}^{(e_\ell)}
	\Bigg)^{ 1 + \frac{p_{e_\ell}}{p_{e_{\ell'}}} }
        \prod_{e\in E(G_{I})}b_{v(e)}^{(e)}
	\Bigg]^{ \frac{ p_{e_{\ell'}} }{ p_{e_\ell}+p_{e_{\ell'}} } }
	\times\mbox{}\\
	&\Bigg[
        \sum_{\mathbf{v}\in[n]^{I}}
	\Bigg(
	\sum_j b_{v_{i_{\ell'}}j}^{(e_{\ell'})}
	\Bigg)^{ 1 + \frac{p_{e_{\ell'}}}{p_{e_{\ell}}} }
        \prod_{e\in E(G_{I})}b_{v(e)}^{(e)}
	\bigg]^{ \frac{ p_{e_{\ell}} }{ p_{e_\ell}+p_{e_{\ell'}} } }.
\end{align*}
Now observe the following properties of the right-hand side:
\begin{itemize}[label=\textbullet, leftmargin=*]
\item The graph $G_{I}$ is again a tree, as we remove only leaves
from $G$.
\item Both sides of the inequality are $1$-homogeneous in all the 
variables $b^{(e)}$.
\end{itemize}
The latter two properties will form the basis for the induction.

Let us now describe the induction step, which is again very similar.
Suppose we have shown, for some $r>1$, the induction hypothesis
$$
	W(G) \le
	\prod_{s=1}^S
	\Bigg[
        \sum_{\mathbf{v}\in[n]^{I_s}}
	\Bigg(
	\sum_j b_{v_{i_s}j}^{(e_s)}
	\Bigg)^{q_s}
        \prod_{e\in E(G_{I_s})}b_{v(e)}^{(e)}
	\Bigg]^{\frac{1}{\alpha_s}},
$$
where $S<\infty$, $I_s\subseteq[m]$, $i_s\in I_s$, $e_s\in E(G)
\backslash E(G_{I_s})$, and $\alpha_s,q_s\ge 1$ satisfy:
\begin{itemize}[label=\textbullet, leftmargin=*]
\item $\card I_s=r$ and $G_{I_s}$ is a tree for every $s$.
\item $q_s = \sum_{e\in E(G)\backslash E(G_{I_s})}p_{e_s}/p_e$.
\item The right-hand side is $1$-homogeneous
in all the variables $b^{(e)}$, $e\in E(G)$.
\end{itemize}
We aim to show that the induction hypothesis remains valid for
$r\leftarrow r-1$.

Consider a single term $s$ on the right-hand side.
As $G_{I_s}$ is a tree, it must have at least two leaves; in
particular, there is a vertex $\ell\ne i_s$ that is a leaf of
$G_{I_s}$. Thus
\begin{align*}
        &\sum_{\mathbf{v}\in[n]^{I_s}}
	\Bigg(
	\sum_j b_{v_{i_s}j}^{(e_s)}
	\Bigg)^{q_s}
        \prod_{e\in E(G_{I_s})}b_{v(e)}^{(e)}
	= \\ &\qquad\quad
        \sum_{\mathbf{v}\in[n]^{I_s'}}
	\Bigg(
	\sum_j b_{v_{i_s}j}^{(e_s)}
	\Bigg)^{q_s}
	\Bigg(
	\sum_j b_{v_{i'}j}^{(e')}
	\Bigg)
        \prod_{e\in E(G_{I_s'})}b_{v(e)}^{(e)},
\end{align*}
where $I_s'=I_s\backslash\{\ell\}$ and $i'\in I_s'$ is the unique vertex 
such that $e'=\{i',\ell\}\in E(G_{I_s})$. Applying
H\"older's inequality readily yields
\begin{align*}
        \sum_{\mathbf{v}\in[n]^{I_s}}
	\Bigg(
	\sum_j b_{v_{i_s}j}^{(e_s)}
	\Bigg)^{q_s}
        \prod_{e\in E(G_{I_s})}b_{v(e)}^{(e)}
	\le \mbox{} & 
	\Bigg[
        \sum_{\mathbf{v}\in[n]^{I_s'}}
	\Bigg(
	\sum_j b_{v_{i_s}j}^{(e_s)}
	\Bigg)^{q_s'}
        \prod_{e\in E(G_{I_s'})}b_{v(e)}^{(e)}
	\Bigg]^{\frac{q_s}{q_s'}}
	\\
	& \times \Bigg[
        \sum_{\mathbf{v}\in[n]^{I_s'}}
	\Bigg(
	\sum_j b_{v_{i'}j}^{(e')}
	\Bigg)^{q'}
        \prod_{e\in E(G_{I_s'})}b_{v(e)}^{(e)}
	\Bigg]^{\frac{1}{q'}},
\end{align*}
where $q_s'=\sum_{e\in E(G)\backslash E(G_{I_s'})}p_{e_s}/p_e$ and
$q'=\sum_{e\in E(G)\backslash E(G_{I_s'})}p_{e'}/p_e$ (here we used 
$q_s/q_s' + 1/q' = 1$).
Observe in particular that by construction, both sides in this inequality 
have the same degree of homogeneity in each variable $b^{(e)}$.

We have now shown how to bound a single term $s$ in the induction 
hypothesis. To conclude the induction argument, we replace every term in 
the induction hypothesis by the upper bound obtained by this procedure. We 
claim that the resulting bound again satisfies the induction hypothesis 
with $r\leftarrow r-1$, concluding the induction step. Indeed, by 
construction, each set $I_s'$ that appears in the new bound satisfies 
$\card I_s'=r-1$ and $G_{I_s'}$ is a tree (it was obtained from $G_{I_s}$
by removing a leaf). Moreover, by construction each term has the correct 
power $q_s'$. Finally, as each term in the induction hypothesis has been 
replaced by a term with the same homogeneity in each variable $b^{(e)}$, 
it follows immediately that the new bound is still $1$-homogeneous. This 
concludes the proof of the induction step.

The above induction guarantees the validity of the induction hypothesis
for $r=1$, that is, we have proved the following bound:
$$
	W(G) \le
	\prod_{s=1}^S
	\Bigg[
        \sum_{i}
	\Bigg(
	\sum_j b_{ij}^{(e_s)}
	\Bigg)^{p_{e_s}}
	\Bigg]^{\frac{1}{\alpha_s}} =
	\prod_{e\in E(G)}
	\Bigg[
        \sum_{i}
	\Bigg(
	\sum_j b_{ij}^{(e)}
	\Bigg)^{p_e}
	\Bigg]^{\frac{1}{\alpha_e}},
$$
where we defined $1/\alpha_e = \sum_{s:e_s=e}1/\alpha_{e_s}$. Moreover, 
the induction argument guarantees that the right-hand side is 
$1$-homogeneous in all variables $b^{(e)}$, $e\in E(G)$. It must therefore 
necessarily be the case that $\alpha_e = p_e$, concluding the proof. 
\end{proof}

Combining Lemmas \ref{lem:tree} and \ref{lem:bizarrobl} with the
assumption of Theorem \ref{thm:holder} yields:

\begin{cor}
\label{cor:preholder}
For any $G\in\mathcal{G}_m$ and $\mathbf{k}=(k_e)_{e\in E(G)}$ such that
$k_e\ge 2$ for every $e\in E(G)$, there exist 
$k_2',\ldots,k_m'\ge 2$ such that $\sum_{\ell=2}^mk_\ell'=|\mathbf{k}|$ and
$$
	W^{\mathbf{k}}(G) \le
	\prod_{\ell=2}^m
	\Bigg(
	\sum_i\Bigg(
	\sum_j b_{ij}^{k_\ell'}\Bigg)^{\frac{|\mathbf{k}|}{k_\ell'}}
	\Bigg)^{\frac{k_\ell'}{|\mathbf{k}|}}.
$$
\end{cor}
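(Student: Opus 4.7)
The plan is to deduce Corollary \ref{cor:preholder} by a direct concatenation of the two preceding lemmas, with essentially no further work. First, I would invoke Lemma \ref{lem:tree} to produce weights $k_2',\ldots,k_m' \ge \min_e k_e \ge 2$ with $\sum_{\ell=2}^m k_\ell' = |\mathbf{k}|$ and a tree $G' \in \mathcal{G}_m^{\mathrm{tree}}$ (encoded by some parent map $i_\ell \le \ell-1$) such that
$$
W^{\mathbf{k}}(G) \;\le\; W^{\mathbf{k}'}(G') \;=\; \sum_{v_1,\ldots,v_m \in [n]} b_{v_1 v_2}^{k_2'} b_{v_{i_3} v_3}^{k_3'} \cdots b_{v_{i_m} v_m}^{k_m'}.
$$
At this point the graph is a tree and the hypotheses of Lemma \ref{lem:bizarrobl} apply.

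Second, I would apply Lemma \ref{lem:bizarrobl} to $G'$ with the per-edge weight matrices $b^{(e_\ell)}_{ij} := b_{ij}^{k_\ell'}$ and with the dual exponents $p_{e_\ell} := |\mathbf{k}|/k_\ell'$. The choice of exponents is forced by two requirements: on the one hand, $\sum_{\ell=2}^m 1/p_{e_\ell} = \sum_{\ell=2}^m k_\ell'/|\mathbf{k}| = 1$ as required by the hypothesis of Lemma \ref{lem:bizarrobl}; on the other hand, $p_{e_\ell} \ge 1$ since $k_\ell' \le |\mathbf{k}|$ (this holds because $k_\ell' \ge 2 > 0$ and the $k_\ell'$ sum to $|\mathbf{k}|$, so no single term can exceed the total). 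Plugging in these choices gives exactly
$$
W^{\mathbf{k}'}(G') \;\le\; \prod_{\ell=2}^m \Bigg(\sum_i \Bigg(\sum_j b_{ij}^{k_\ell'}\Bigg)^{|\mathbf{k}|/k_\ell'}\Bigg)^{k_\ell'/|\mathbf{k}|},
$$
which combined with the first inequality yields the conclusion.

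There is essentially no obstacle here since all the work has been done in Lemmas \ref{lem:tree} and \ref{lem:bizarrobl}; the only things to verify are the bookkeeping items (that the $k_\ell'$ remain $\ge 2$, that $\sum 1/p_{e_\ell} = 1$, and that $p_{e_\ell} \ge 1$), all of which are immediate from the constraints built into Lemma \ref{lem:tree}. The slightly subtle point worth flagging is that Lemma \ref{lem:tree} does not give us the freedom to choose the $k_\ell'$ ourselves—they are determined by the reduction procedure—but the bound in the corollary is stated existentially (``there exist $k_2',\ldots,k_m'\ge 2$''), so this matches perfectly.
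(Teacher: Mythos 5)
Your proposal is correct and is precisely the paper's argument: the paper derives Corollary \ref{cor:preholder} by exactly this concatenation of Lemma \ref{lem:tree} (which supplies the weights $k_\ell'\ge\min_e k_e\ge 2$ summing to $|\mathbf{k}|$ and reduces to a tree) with Lemma \ref{lem:bizarrobl} applied to $b^{(e_\ell)}_{ij}=b_{ij}^{k_\ell'}$ and $p_{e_\ell}=|\mathbf{k}|/k_\ell'$. The bookkeeping checks you flag ($\sum_\ell 1/p_{e_\ell}=1$ and $p_{e_\ell}\ge 1$) are the only things to verify, and you verify them correctly.
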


We can now conclude the proof of Theorem \ref{thm:holder}.

\begin{proof}[Proof of Theorem \ref{thm:holder}]
For any $2\le k\le K$, note that
\begin{align*}
	\sum_i\Bigg(\sum_j b_{ij}^k\Bigg)^{K/k} &\le
	\sum_i
	\Bigg(\sum_j b_{ij}^2\Bigg)^{K/k}
	\max_jb_{ij}^{K(k-2)/k}
	\\ &\le
	\Bigg(\sum_i\Bigg(\sum_j b_{ij}^2\Bigg)^{K/2}\Bigg)^{2/k}
	\Bigg(\sum_i\max_jb_{ij}^K\Bigg)^{(k-2)/k},
\end{align*}
where the second inequality used H\"older. The conclusion of Theorem
\ref{thm:holder} follows by applying this estimate to every term of
Corollary \ref{cor:preholder} (with $k=k_\ell'$, $K=|\mathbf{k}|$).
\end{proof}

\subsection{Proof of Corollary \ref{cor:moment}}
\label{sec:complex}

There are two separate issues that must be addressed in the proof of 
Corollary \ref{cor:moment}. The first is to prove a lower bound on 
$\mathbf{E}\|X\|_{S_p}^p$, which is elementary. The second is to extend 
the upper bound of Theorem \ref{thm:moment} to non-integer values of $p$,
which will be accomplished by complex interpolation.

We begin with the lower bound. We will need the following 
deterministic fact.

\begin{lem}
\label{lem:schlp}
For any (not necessarily symmetric) matrix $M$ and
$2\le p\le\infty$
$$
	\|M\|_{S_p} \ge
	\|M\|_{\ell_p(\ell_2)} :=
	\Bigg(\sum_i\Bigg(\sum_j M_{ij}^2\Bigg)^{p/2}\Bigg)^{1/p}.
$$
\end{lem}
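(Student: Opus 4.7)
The plan is to rewrite both sides of the inequality in terms of the positive semidefinite matrix $A := MM^T$, and then reduce to the standard fact that for any $A\succeq 0$ and any exponent $q\ge 1$,
$$
    \mathrm{Tr}(A^q) \;\ge\; \sum_i A_{ii}^q.
$$
Indeed, the $i$-th diagonal of $A$ is $A_{ii}=\sum_j M_{ij}^2$, so $\|M\|_{\ell_p(\ell_2)}^p = \sum_i A_{ii}^{p/2}$, while by definition $\|M\|_{S_p}^p = \mathrm{Tr}[(MM^T)^{p/2}] = \mathrm{Tr}(A^{p/2})$. Thus with $q=p/2\ge 1$ the inequality we want is exactly the displayed one above.

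To establish $\mathrm{Tr}(A^q)\ge \sum_i A_{ii}^q$ for $q\ge 1$, I would use the operator Jensen inequality. Writing $A=\sum_k \lambda_k u_k u_k^T$ via the spectral theorem, for any unit vector $e_i$ we have $\langle e_i, A e_i\rangle = \sum_k \lambda_k \langle e_i, u_k\rangle^2$, which is an average of the nonnegative values $\lambda_k$ against the probability weights $\mu_k := \langle e_i,u_k\rangle^2$ (these sum to $1$ as $\{u_k\}$ is an orthonormal basis). Since $x\mapsto x^q$ is convex on $[0,\infty)$ for $q\ge 1$, Jensen's inequality gives
$$
    A_{ii}^q \;=\; \Big(\textstyle\sum_k \mu_k \lambda_k\Big)^q
    \;\le\; \sum_k \mu_k \lambda_k^q \;=\; \langle e_i, A^q e_i\rangle.
$$
Summing over $i$ yields $\sum_i A_{ii}^q \le \sum_i \langle e_i, A^q e_i\rangle = \mathrm{Tr}(A^q)$, as desired.

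For the endpoint case $p=\infty$, the claim reduces to $\max_i \sqrt{A_{ii}} \le \sqrt{\lambda_{\max}(A)}=\|M\|_{S_\infty}$, which is immediate from $A_{ii}=\langle e_i,Ae_i\rangle\le \lambda_{\max}(A)$; alternatively one may let $p\to\infty$ in the finite-$p$ inequality. There is no significant obstacle here; the only substantive ingredient is the convexity-based trace inequality, which is entirely standard (it may equivalently be deduced from the Schur--Horn theorem, since the diagonal of $A$ is majorized by its spectrum and $x\mapsto x^q$ is convex for $q\ge 1$).
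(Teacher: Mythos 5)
Your proof is correct. Both you and the paper make the same initial reduction, namely $\|M\|_{S_p}^p=\mathrm{Tr}[(MM^*)^{p/2}]$ and $\|M\|_{\ell_p(\ell_2)}^p=\sum_i (MM^*)_{ii}^{p/2}$, so everything comes down to the trace inequality $\mathrm{Tr}(A^q)\ge\sum_i A_{ii}^q$ for $A\succeq 0$ and $q\ge 1$. Where you differ is in how this last inequality is justified. The paper uses trace duality of Schatten norms: it writes $\|MM^*\|_{S_{p/2}}=\sup_{\|Z\|_{S_{p/(p-2)}}\le 1}\mathrm{Tr}[ZMM^*]$ and lower bounds the supremum by restricting to diagonal matrices $Z=\mathrm{diag}(v)$ with $\|v\|_{p/(p-2)}\le 1$, which recovers the $\ell_{p/2}$ norm of the diagonal of $MM^*$. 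You instead expand $A_{ii}=\sum_k\lambda_k\langle e_i,u_k\rangle^2$ via the spectral theorem and apply Jensen's inequality to the probability weights $\langle e_i,u_k\rangle^2$; this is the standard majorization argument (diagonal majorized by spectrum) and is entirely self-contained, requiring no knowledge of the dual Schatten norm. The paper's route is shorter if one takes the duality formula for granted; yours is more elementary and makes the convexity mechanism explicit. Your handling of the endpoint $p=\infty$ is also fine.
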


\begin{proof}
Note that as $p\ge 2$, we can write
\begin{align*}
	\|M\|_{S_p}^2 =
	\|MM^*\|_{S_{p/2}} &=
	\sup_{\|Z\|_{S_{p/(p-2)}}\le 1} \mathrm{Tr}[ZMM^*]
	\\ &\ge \sup_{\|v\|_{p/(p-2)}\le 1} 
	\mathrm{Tr}[\mathrm{diag}(v)MM^*] = 
	\Bigg(\sum_i (MM^*)_{ii}^{p/2}\Bigg)^{2/p}.
\end{align*}
The result follows readily.
\end{proof}

\begin{proof}[Proof of Corollary \ref{cor:moment}: lower bound]
We compute two distinct lower bounds. First, note that by Lemma 
\ref{lem:schlp} and Jensen's inequality,
$$
	(\mathbf{E}\|X\|_{S_p}^p)^{1/p} \ge
	\Bigg(\sum_i\Bigg(\sum_j 
	\mathbf{E}X_{ij}^2\Bigg)^{p/2}\Bigg)^{1/p} =
	\Bigg(\sum_i\Bigg(\sum_j 
	b_{ij}^2\Bigg)^{p/2}\Bigg)^{1/p}.
$$
On the other hand, using again Lemma \ref{lem:schlp}, we can estimate
$$
	(\mathbf{E}\|X\|_{S_p}^p)^{1/p} \ge
	\Bigg(\sum_{i,j}\mathbf{E}|X_{ij}|^p\Bigg)^{1/p}
	\gtrsim \sqrt{p}\Bigg(\sum_{i,j}b_{ij}^p\Bigg)^{1/p},
$$
where we used $(\sum_j X_{ij}^2)^{p/2} \ge \sum_j |X_{ij}|^p$ and
$\mathbb{E}[|g|^p]^{1/p} \asymp \sqrt{p}$ when $g$ is standard Gaussian.
Averaging these two bounds concludes the proof of the lower bound.
\end{proof}

In the rest of this section, we will use standard facts and definitions 
from the theory of complex interpolation that can be found, for example, 
in \cite[Chapter 8]{Pis16}.

In order to apply complex interpolation, it will be most convenient to 
work with non-symmetric matrices rather than symmetric ones. Our basic 
object of study will be defined as follows. Let $(\tilde 
g_{ij})_{i,j\in[n]}$ be i.i.d.\ (real) standard Gaussian variables, and 
define the linear mapping $T:\mathbb{C}^{n\times n}\to 
\bigcap_p L^p(\Omega;\mathbb{C}^{n\times n})$ as
$$
	T((a_{ij})_{i,j\in[n]}) = (a_{ij}\tilde g_{ij})_{i,j\in [n]}.
$$
That is, $T$ maps the complex coefficients $(a_{ij})$ to the non-symmetric 
complex random matrix with entries $(a_{ij}\tilde g_{ij})$.
From Theorem \ref{thm:moment}, we deduce the following (here we define the 
random matrix norm
$\|X\|_{L^{p}(S_{p})} := (\mathbf{E}\|X\|_{S_{p}}^{p})^{1/p}$).

\begin{lem}
\label{lem:complext}
We have for all $p\in\mathbb{N}$
$$
	\|T((a_{ij}))\|_{L^{2p}(S_{2p})}\lesssim
	\|(a_{ij})\|_{\ell_{2p}(\ell_2)} +
	\|(a_{ji})\|_{\ell_{2p}(\ell_2)} + \sqrt{p}\,\|(a_{ij})\|_{\ell_{2p}}.
$$
\end{lem}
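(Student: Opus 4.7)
The plan is to reduce the lemma to Theorem~\ref{thm:moment} by two standard symmetrization steps: a real/imaginary decomposition to eliminate the complex coefficients, followed by a Hermitian dilation to reduce the non-symmetric matrix to a symmetric one.

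\emph{From complex to real coefficients.} Writing $a_{ij} = \alpha_{ij} + i\beta_{ij}$ with $\alpha_{ij},\beta_{ij}\in\mathbb{R}$ and using that $\tilde g_{ij}$ is real, we have $T((a_{ij})) = T((\alpha_{ij})) + i\,T((\beta_{ij}))$. The triangle inequality in $L^{2p}(S_{2p})$ yields
$$
	\|T((a_{ij}))\|_{L^{2p}(S_{2p})} \le \|T((\alpha_{ij}))\|_{L^{2p}(S_{2p})} + \|T((\beta_{ij}))\|_{L^{2p}(S_{2p})},
$$
and since $|\alpha_{ij}|,|\beta_{ij}|\le|a_{ij}|$, it suffices to establish the claim for real coefficient matrices.

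\emph{Hermitian dilation and application of Theorem~\ref{thm:moment}.} Given real coefficients $c=(c_{ij})$ set $Y=T(c)$ and form the symmetric $2n\times 2n$ random matrix
$$
	Z := \begin{pmatrix} 0 & Y \\ Y^T & 0 \end{pmatrix}.
$$
Its eigenvalues are $\pm\sigma_k(Y)$, so $\|Z\|_{S_{2p}}^{2p}=2\|Y\|_{S_{2p}}^{2p}$. Moreover $Z$ is a symmetric real Gaussian matrix with independent centered entries above the diagonal whose variance pattern is
$$
	B_{I,n+j}=B_{n+j,I}=|c_{Ij}|\quad(I,j\in[n]),\qquad B_{IJ}=0\ \text{otherwise}.
$$
Thus Theorem~\ref{thm:moment} applies to $Z$ and gives
$$
	(\mathbf{E}\|Z\|_{S_{2p}}^{2p})^{1/(2p)} \le 2\Bigl(\sum_I\Bigl(\sum_J B_{IJ}^2\Bigr)^{p}\Bigr)^{1/(2p)} + 5\sqrt{2p}\,\Bigl(\sum_I\max_J B_{IJ}^{2p}\Bigr)^{1/(2p)}.
$$
A direct inspection of the block pattern of $B$ gives $\sum_I(\sum_J B_{IJ}^2)^p = \sum_i(\sum_j c_{ij}^2)^p + \sum_i(\sum_j c_{ji}^2)^p$ and $\sum_I\max_J B_{IJ}^{2p}\le 2\sum_{i,j}c_{ij}^{2p}$. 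Combining these with the elementary subadditivity $(x+y)^{1/(2p)}\le x^{1/(2p)}+y^{1/(2p)}$ yields
$$
	\|Y\|_{L^{2p}(S_{2p})}\lesssim \|(c_{ij})\|_{\ell_{2p}(\ell_2)} + \|(c_{ji})\|_{\ell_{2p}(\ell_2)} + \sqrt{p}\,\|(c_{ij})\|_{\ell_{2p}}.
$$
Applying this to $c=\alpha$ and $c=\beta$ and dominating each term by the same quantity for $|a_{ij}|$ completes the proof.

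\emph{Main obstacle.} There is no deep difficulty: once Theorem~\ref{thm:moment} is in hand, the lemma is essentially a packaging step, with the dilation dealing with asymmetry and the real/imaginary split dealing with complex scalars. The only care required is bookkeeping: verifying that the row- and column-Euclidean norms on the right-hand side arise correctly from the block structure of the dilation $Z$, and tracking the harmless multiplicative constants produced by the two symmetrizations.
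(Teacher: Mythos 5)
Your proposal is correct and follows essentially the same route as the paper: a real/imaginary split via the triangle inequality, followed by the $2n\times 2n$ Hermitian dilation with $\|Z\|_{S_{2p}}^{2p}=2\|Y\|_{S_{2p}}^{2p}$, and then Theorem~\ref{thm:moment} applied to the dilated symmetric Gaussian matrix. The bookkeeping of the block variance pattern is also as in the paper (which leaves it implicit), so there is nothing to add.
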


\begin{proof}
By writing $T((a_{ij})) = (\mathop{\mathrm{Re}}a_{ij}\tilde g_{ij}) +
i(\mathop{\mathrm{Im}}a_{ij}\tilde g_{ij})$ and applying the triangle 
inequality, it evidently suffices to prove the claim for the case where
all $a_{ij}$ are real. To this end, form the $2n\times 2n$ symmetric 
matrix
$$
	X =
	\begin{pmatrix}
	0 & (a_{ij}\tilde g_{ij}) \\
	(a_{ji}\tilde g_{ji}) & 0
	\end{pmatrix},
$$
and note that $\|X\|_{S_{2p}}^{2p} =
\|X^2\|_{S_p}^p = 2\|(a_{ij}\tilde g_{ij})\|_{S_{2p}}^{2p}$.
The conclusion now follows readily by applying Theorem \ref{thm:moment}
to the real symmetric random matrix $X$.
\end{proof}

Observe that all three norms that appear on the right-hand side of Lemma 
\ref{lem:complext} are Banach lattice norms, as they are monotone in 
$|a_{ij}|$ for every $i,j$. This enables us to apply a very convenient 
observation of \cite[Theorem 2]{Mal87}: intersections of Banach lattices 
are well-behaved under complex interpolation, in the sense that if
$B_i=(\mathbb{C}^m,\|\cdot\|_{B_i})$, $i=0,1,2$ are finite-dimensional 
Banach lattices, then $\|x\|_{(B_0,B_1\cap B_2)_\theta}\le
2\|x\|_{(B_0,B_1)_\theta\cap(B_0,B_2)_\theta}$, where
$\|x\|_{B\cap C}:=\max(\|x\|_B,\|x\|_C)$. It remains to combine this 
observation with standard interpolation arguments.

\begin{proof}[Proof of Corollary \ref{cor:moment}: upper bound]
Let $1\le p<\infty$ be arbitrary. Define $q:=\lceil p\rceil$, and
$\theta\in(0,1)$ by $(1-\theta)/2 + \theta/2q = 1/2p$. By Lemma 
\ref{lem:complext}, we have
\begin{align*}
&	\|T((a_{ij}))\|_{L^{2}(S_{2})} \lesssim
	\|(a_{ij})\|_{\ell_2}, \\
&	\|T((a_{ij}))\|_{L^{2q}(S_{2q})} \lesssim
	\|(a_{ij})\|_{\ell_{2q}(\ell_2)} +
	\|(a_{ji})\|_{\ell_{2q}(\ell_2)} + \sqrt{p}\,\|(a_{ij})\|_{\ell_{2q}}.
\end{align*}
We now recall that
$$
	(\ell_2,\ell_{2q}(\ell_2))_\theta= \ell_{2p}(\ell_2),\qquad
	(\ell_2,\ell_{2q})_\theta=\ell_{2p},\qquad
	(L^2(S_2),L^{2q}(S_{2q}))_\theta=L^{2p}(S_{2p})
$$
isometrically, see \cite[Theorem 8.21 and (14.3)]{Pis16}. Thus the 
above-mentioned lattice property \cite{Mal87} and the fundamental
theorem of interpolation \cite[Theorem 8.8]{Pis16} yield
$$
	\|T((a_{ij}))\|_{L^{2p}(S_{2p})} \lesssim
	\|(a_{ij})\|_{\ell_{2p}(\ell_2)} +
	\|(a_{ji})\|_{\ell_{2p}(\ell_2)} + \sqrt{p}\,\|(a_{ij})\|_{\ell_{2p}}.
$$
That is, we have shown that Lemma \ref{lem:complext} extends to all
(non-integer) $1\le p<\infty$.

Finally, let the symmetric matrix $X$ be as in Theorem \ref{thm:moment}.
To deduce the conclusion of Corollary \ref{cor:moment}, note that we can 
estimate by the triangle inequality
$$
	\|X\|_{L^p(S_p)} \le
	\|T((b_{ij}1_{i\ge j}))\|_{L^p(S_p)} +
	\|T((b_{ij}1_{i<j}))\|_{L^p(S_p)},
$$
where we used that the entries above (or below) the diagonal of $X$ are 
independent. The conclusion now follows readily from the above estimate
on $T$.
\end{proof}

\section{Norm estimates}
\label{sec:norm}

The main result of this section is the following theorem. When combined 
with Corollary \ref{cor:moment} and Remark \ref{rem:strange}, this 
concludes the proof of Theorem \ref{thm:main}.

\begin{thm}
\label{thm:schatten}
Let $X$ be an $n\times n$ symmetric matrix with 
$X_{ij}=b_{ij}g_{ij}$, where $b_{ij}\ge 0$ and $g_{ij}$ are i.i.d.\ 
standard Gaussian variables for $i\ge j$. Then for $2\le p\le\infty$
\begin{align*}
	\mathbf{E}\|X\|_{S_p} &\asymp
	\mathbf{E}\Bigg[
	\Bigg(\sum_i\Bigg(\sum_j X_{ij}^2\Bigg)^{p/2}\Bigg)^{1/p}
	\Bigg] 
	\\
	&\asymp
	\Bigg(\sum_i\Bigg(\sum_j b_{ij}^2\Bigg)^{p/2}\Bigg)^{1/p} +
	\max_{i\le e^p}\max_j b_{ij}^*\sqrt{\log i} +
	\sqrt{p}\Bigg(
	\sum_{i\ge e^p}\max_j {b_{ij}^{*}}^p
	\Bigg)^{1/p},
\end{align*}
where the matrix $(b_{ij}^*)$ is obtained by permuting the rows and 
columns of the matrix $(b_{ij})$ such that
$\max_j b_{1j}^* \ge \max_j b_{2j}^* \ge \cdots \ge \max_j b_{nj}^*$.
\end{thm}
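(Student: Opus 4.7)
The plan is to prove both equivalences simultaneously by showing that each of $\mathbf{E}\|X\|_{S_p}$ and $\mathbf{E}\|X\|_{\ell_p(\ell_2)}$ matches the explicit three-term expression. The lower-bound direction is the easier half: the inequality $\mathbf{E}\|X\|_{S_p} \ge \mathbf{E}\|X\|_{\ell_p(\ell_2)}$ is automatic from Lemma \ref{lem:schlp}, and I will bound $\mathbf{E}\|X\|_{\ell_p(\ell_2)}$ below by the three explicit terms separately: the first via Jensen applied to $x \mapsto (\sum_j x_j^2)^{p/2}$; the second from $\|X\|_{\ell_p(\ell_2)} \ge \max_{i \le k}\max_j |X_{ij}|$ together with the Gaussian max lower bound $\mathbf{E}\max_{i \le k}\max_j |X_{ij}| \gtrsim \max_j b_{kj}^*\sqrt{\log k}$, optimized over $k \le e^p$; and the third from $\|X\|_{\ell_p(\ell_2)}^p \ge \sum_{i > e^p}\max_j |X_{ij}|^p$ together with $(\mathbf{E}|g|^p)^{1/p} \asymp \sqrt{p}$.

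For the upper bound on $\mathbf{E}\|X\|_{\ell_p(\ell_2)}$, set $R_i := (\sum_j X_{ij}^2)^{1/2}$, which is a $\max_j b_{ij}$-Lipschitz function of the independent Gaussians $(g_{ij})_j$ and therefore $\sigma_i$-subgaussian about its mean with $\sigma_i := \max_j b_{ij}$ and $\mathbf{E} R_i \asymp (\sum_j b_{ij}^2)^{1/2}$. The triangle inequality gives $\mathbf{E}\|R\|_p \le \|(\mathbf{E} R_i)\|_p + \mathbf{E}\|R - \mathbf{E} R\|_p$, where the first summand is exactly the first explicit term and the second is handled by the classical Gluskin--Kwapie\'n formula for the expected $\ell_p$-norm of independent centered subgaussians; that formula yields precisely the second plus third explicit terms once the sorted $\sigma_i^*$ is identified with $\max_j b_{ij}^*$.

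The upper bound on $\mathbf{E}\|X\|_{S_p}$ is the principal difficulty. I first handle the case $p = \infty$: after permuting rows and columns so that $\max_j b_{ij}^*$ is decreasing, a structural lemma (cf.\ Lemma \ref{lem:core} below) will decompose $X$ into a nearly block-diagonal ``core'' of low-dimensional blocks and an off-diagonal ``remainder'' whose entry variances decay rapidly. The core is controlled by the dimension-dependent moment bound of \cite{BvH16}, which is sharp in its low-dimensional regime, while the remainder is controlled by the suboptimal dimension-free bound of \cite{vH17} whose extra $\log i$ factor is rendered harmless by the rapid decay. For general $2 \le p < \infty$, I set $k := \lfloor e^p\rfloor$ and split $X = X^{(1)} + X^{(2)}$, where $X^{(1)}$ is the restriction to the top-left $k\times k$ block. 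Since $k^{1/p} = e$, the trivial bound $\|X^{(1)}\|_{S_p} \le e\|X^{(1)}\|_{S_\infty}$ combined with the already-proved $p=\infty$ case gives $\mathbf{E}\|X^{(1)}\|_{S_p}$ bounded by the first two explicit terms. For $X^{(2)}$, I apply Corollary \ref{cor:moment} directly and reduce the offending tail term $\sqrt{p}(\sum_{\max(i,j)>k} b_{ij}^{*p})^{1/p}$ via Remark \ref{rem:strange} together with the sorting property (which forces $\max_{j>k} b_{ij}^* \le \max_j b_{(k+1)\,j}^*$ whenever $i \le k$, and identifies $\max_j b_{ij}^*$ with $M_i$ when $i > k$), producing exactly the second and third explicit terms.

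The principal obstacle is the $p = \infty$ case, which rests on the structural decomposition of the variance pattern so that the two previously known suboptimal bounds complement each other optimally; once this is in hand, the passage to general $p$ demands only the elementary two-scale argument above and introduces no substantial new difficulty.
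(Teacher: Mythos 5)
Your proposal follows essentially the same route as the paper: the lower bound via Lemma \ref{lem:schlp} plus the explicit evaluation of the mixed norm, the mixed-norm upper bound via row-wise Gaussian concentration of $R_i$, the $p=\infty$ case via the core/remainder structural decomposition that plays the dimension-dependent bound of \cite{BvH16} against the dimension-free bound of \cite{vH17}, and the passage to general $p$ by cutting at scale $e^p$ and combining the rank bound with Corollary \ref{cor:moment} and Remark \ref{rem:strange}. The only (inessential) difference is that you remove the top-left $e^p\times e^p$ block, whereas the paper sets $Z_{ij}=X_{ij}\mathbf{1}_{\min(i,j)\ge e^p}$ so that the off-diagonal strips land in the low-rank piece; your observation that symmetry and the sorting force $\max_{j>e^p}b_{ij}^*\le\max_j b^*_{\lceil e^p\rceil\, j}$ for $i\le e^p$ correctly absorbs those strips into the second explicit term, so both versions go through.
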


\begin{rem}
Theorem \ref{thm:schatten} is concerned with the regime $2\le p\le\infty$. 
That its conclusion remains valid in the regime $1\le p<2$ follows already 
from the much more general noncommutative Khintchine inequality 
\cite[section 6]{PX03}, when specialized to symmetric matrices with 
independent Gaussian entries (the formulation for non-symmetric matrices 
is a bit more subtle). In fact, the noncommutative Khintchine 
inequality is valid in the range $1\le p<\infty$, but becomes increasingly 
suboptimal for large $p$. The key novelty of Theorem \ref{thm:schatten} is 
that it captures the precise behavior for $p\to\infty$ in the setting
where the matrix has independent entries. Our result is already
qualitatively new for $p=\infty$; it implies, for example, the
characterization of bounded infinite-dimensional random matrices of
Corollary \ref{cor:inf} in the introduction. The proof of the latter 
result will be given at the end of this section.
\end{rem}

\begin{rem}
Theorem \ref{thm:schatten} proves that $\mathbf{E}\|X\|_{S_p}\asymp 
\mathbf{E}\|X\|_{\ell_p(\ell_2)}$. It is interesting to note that 
Corollary \ref{cor:moment} admits a similar interpretation: one may show 
that its conclusion can be rewritten as 
$(\mathbf{E}\|X\|_{S_p}^p)^{1/p}\asymp(\mathbf{E}\|X\|_{\ell_p(\ell_2)}^p)^{1/p}$. 
These observations suggest that perhaps other moments of the random 
variables $\|X\|_{S_p}$ and $\|X\|_{\ell_p(\ell_2)}$ may also be 
comparable. In fact, we will show that the distributions of these random 
variables are comparable in a much stronger sense, as was stated 
in Corollary \ref{cor:schattentail} in the introduction. This result
will also be proved at the end of this section.
\end{rem}

\begin{rem}
\label{rem:nonzeromean}
Throughout the paper, we focus attention on centered Gaussian 
matrices $X$. There is however no loss of generality in doing so:
a matrix with arbitrary mean can always be reduced to the centered case
using that
$$
        \mathbf{E}[\|A+X\|_{S_p}^q]^{1/q} \asymp \|A\|_{S_p} +
        \mathbf{E}[\|X\|_{S_p}^q]^{1/q}
$$
for any deterministic matrix $A$, centered random matrix $X$, and $q\ge 1$. 
Indeed, the upper bound is obvious by the triangle inequality. To show the
lower bound, note that $\mathbf{E}[\|A+X\|_{S_p}^q]^{1/q}\ge\|A\|_{S_p}$ 
by Jensen's inequality (as $\mathbf{E}X=0$), while
$\mathbf{E}[\|A+X\|_{S_p}^q]^{1/q}\ge\mathbf{E}[\|X\|_{S_p}^q]^{1/q}-
\|A\|_{S_p}$ by the reverse triangle inequality. Adding twice the first 
inequality to the second inequality gives a the desired lower bound.
\end{rem}

\subsection{The mixed norm}

Before we turn to the main part of the proof of 
Theorem~\ref{thm:schatten}, we first establish the explicit expression 
given there in terms of the coefficients $b_{ij}$. This explicit 
expression will play an important role in the subsequent analysis of the 
random matrix. In the special case $p=\infty$, this result was proved in 
\cite[Theorem~1.2]{vH17}; we extend it here to any value of $2\le p\le 
\infty$.

We will need the following elementary result.

\begin{lem}
\label{lem:gausslp}
Let $G=(G_1,\ldots,G_n)$ be independent with $G_i\sim N(0,\sigma_i^2)$.
Then
$$
	\mathbf{E}\|G\|_{\ell_p} \asymp
	\max_{i\le e^p}\sigma_i^*\sqrt{\log(i+1)} +
	\sqrt{p}\,\Bigg(\sum_{i\ge e^p} {\sigma_i^*}^p\Bigg)^{1/p}
$$
for $2\le p\le \infty$, where $(\sigma_i^*)$ is the nonincreasing 
rearrangement of $(\sigma_i)$. The upper bound remains valid without 
independence and when $G_i$ is only $\sigma_i$-subgaussian.
\end{lem}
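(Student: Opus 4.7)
By the permutation invariance of $\|\cdot\|_{\ell_p}$, I would assume without loss of generality that $\sigma_i = \sigma_i^*$ is already nonincreasing. The plan is to split the coordinates at the threshold $k := \lceil e^p \rceil$ into a ``head'' $\{1,\ldots,k\}$ and a ``tail'' $\{k+1,\ldots,n\}$, and to estimate the two pieces by different tools: the expected maximum for the head, and the $p$th moment for the tail. The threshold is chosen so that $k^{1/p} \in [e, 2e]$, which will make $\ell_p$ and $\ell_\infty$ comparable on the head; for $p=\infty$ the tail is empty and only the head bound is needed.

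For the upper bound I would split $\|G\|_{\ell_p} \le \|(G_i)_{i \le k}\|_{\ell_p} + \|(G_i)_{i > k}\|_{\ell_p}$ by the triangle inequality. On the head, $\|(G_i)_{i \le k}\|_{\ell_p} \le k^{1/p}\max_{i \le k}|G_i| \lesssim \max_{i \le k}|G_i|$, and the classical subgaussian bound on expected maxima (after reordering to pass to the $\sigma_i^*$) gives $\mathbf{E}\max_{i \le k}|G_i| \lesssim \max_{i \le k}\sigma_i^*\sqrt{\log(i+1)}$. On the tail, $\mathbf{E}|G_i|^p \le (C\sigma_i\sqrt{p})^p$ combined with Jensen yields $\mathbf{E}\|(G_i)_{i > k}\|_{\ell_p} \le (\sum_{i > k}\mathbf{E}|G_i|^p)^{1/p} \lesssim \sqrt{p}(\sum_{i > k}{\sigma_i^*}^p)^{1/p}$. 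Neither step uses independence, which accounts for the extension of the upper bound to subgaussian variables.

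For the lower bound, independence enters in two places. First, $\|G\|_{\ell_p} \ge \max_{i \le k}|G_i|$ together with the classical lower bound on expected maxima of independent Gaussians gives $\mathbf{E}\|G\|_{\ell_p} \gtrsim \max_{i \le k}\sigma_i^*\sqrt{\log(i+1)}$. Second, $\|G\|_{\ell_p} \ge \|(G_i)_{i > k}\|_{\ell_p}$, and I must transfer the $p$th moment lower bound $(\mathbf{E}\|(G_i)_{i > k}\|_{\ell_p}^p)^{1/p} \gtrsim \sqrt{p}(\sum_{i > k}{\sigma_i^*}^p)^{1/p}$ (immediate from the moment identity above) back to the first moment. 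For this I would invoke Gaussian concentration: the map $v \mapsto \|(\sigma_i v_i)_{i > k}\|_{\ell_p}$ is $\sigma_{k+1}^*$-Lipschitz on $\mathbb{R}^{n-k}$ (using $\|\cdot\|_{\ell_p}\le\|\cdot\|_{\ell_2}$ for $p\ge 2$), so
$$
\bigl(\mathbf{E}\|(G_i)_{i>k}\|_{\ell_p}^p\bigr)^{1/p} \lesssim \mathbf{E}\|(G_i)_{i>k}\|_{\ell_p} + \sqrt{p}\,\sigma_{k+1}^*,
$$
which rearranges to $\mathbf{E}\|(G_i)_{i>k}\|_{\ell_p} \gtrsim \sqrt{p}(\sum_{i>k}{\sigma_i^*}^p)^{1/p} - \sqrt{p}\,\sigma_{k+1}^*$. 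The subtracted error is absorbed into the head term via $\sqrt{p}\,\sigma_{k+1}^* \le \sigma_k^*\sqrt{\log(k+1)} \lesssim \max_{i \le k}\sigma_i^*\sqrt{\log(i+1)}$, after which averaging the two lower bounds completes the proof.

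The main technical point is precisely this passage from the $p$th moment back to the first moment in the tail estimate: Gaussian concentration produces an additive error of order $\sqrt{p}\,\sigma_{k+1}^*$, and everything hinges on that error being dominated by the head term. The choice $k \sim e^p$ is calibrated exactly so that $\sqrt{p}\,\sigma_{k+1}^*$ is comparable to $\sigma_k^*\sqrt{\log(k+1)}$ and can be harmlessly swallowed; this is what picks out $e^p$ as the right splitting threshold.
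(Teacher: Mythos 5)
Your proposal is correct and follows essentially the same route as the paper: split at the threshold $e^p$, bound the head by $\ell_\infty$ (using $k^{1/p}\lesssim 1$) together with the standard two-sided estimates for maxima of (sub)gaussians, bound the tail via the $p$th moment, and for the lower bound pass from the $p$th moment back to the first moment by Gaussian concentration, absorbing the resulting $\sqrt{p}\,\sigma_{\lceil e^p\rceil}^*$ error into the head term. The only cosmetic difference is that the paper states the Lipschitz/subgaussian constant as $\max_{i\ge e^p}\sigma_i$ rather than $\sigma_{k+1}^*$, which is the same quantity under your monotonicity reduction.
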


\begin{proof}
By permutation invariance, we can assume in the following 
without loss of generality that $\sigma_i$ are positive and nonincreasing 
(so that $\sigma_i=\sigma_i^*$).

Let us begin with the upper bound. By the triangle inequality, we have
$\mathbf{E}\|G\|_{\ell_p}\le
\mathbf{E}\|G_{\le e^p}\|_{\ell_p} + \mathbf{E}\|G_{\ge e^p}\|_{\ell_p}$,
where $G_{\le k}:=(G_1,\ldots,G_{\lfloor k\rfloor})$, 
$G_{\ge k}:=(G_{\lceil k\rceil },\ldots,G_n)$.
Now recall that for a vector $x\in\mathbb{R}^k$, we have
$\|x\|_{\ell_p}\le k^{1/p}\|x\|_{\ell_\infty}$. Therefore
$$
	\mathbf{E}\|G_{\le e^p}\|_{\ell_p} \le
	e\,\mathbf{E}\|G_{\le e^p}\|_{\ell_\infty}
	\lesssim 
	\max_{i\le e^p}\sigma_i\sqrt{\log(i+1)},
$$
where the last inequality can be found in \cite[Lemma 2.3]{vH17}. On the 
other hand,
$$
	\mathbf{E}\|G_{\ge e^p}\|_{\ell_p} \le
	(\mathbf{E}\|G_{\ge e^p}\|_{\ell_p}^p)^{1/p} \lesssim
	\sqrt{p}\,\Bigg(\sum_{i\ge e^p} \sigma_i^p\Bigg)^{1/p},
$$
where we used that $\mathbf{E}[|G_i|^p]^{1/p}\lesssim \sigma_i\sqrt{p}$
when $G_i$ is $\sigma_i$-subgaussian. This concludes the proof of the 
upper bound. Note that the only assumption that was used so far is
that each $G_i$ is $\sigma_i$-subgaussian; no independence was assumed.

We now turn to the lower bound. In the sequel, we will make use of the 
stronger assumptions that $G_i\sim N(0,\sigma_i^2)$ and that $(G_i)$ are 
independent. First, note that
$$
	\max_{i\le e^p}\sigma_i\sqrt{\log(i+1)} \asymp
	\mathbf{E}\|G_{\le e^p}\|_{\ell_\infty} \le
	\mathbf{E}\|G\|_{\ell_p},
$$
where the first inequality is given in \cite[Lemma 2.4]{vH17} and the 
second inequality is trivial. On the other hand, let us note that
$$
	\sqrt{p}\,\Bigg(\sum_{i\ge e^p} \sigma_i^p\Bigg)^{1/p}
	\asymp
	(\mathbf{E}\|G_{\ge e^p}\|_{\ell_p}^p)^{1/p} 
	\lesssim
	\mathbf{E}\|G_{\ge e^p}\|_{\ell_p} + 
	\max_{i\ge e^p}\sigma_i\sqrt{p},
$$
where the first inequality follows as $\mathbf{E}[|G_i|^p]^{1/p}\asymp
\sigma_i\sqrt{p}$ when $G_i\sim N(0,\sigma_i^2)$, and the second 
follows using the triangle inequality and that
$\|G_{\ge e^p}\|_{\ell_p}-\mathbf{E}\|G_{\ge e^p}\|_{\ell_p}$ is 
$\max_{i\ge e^p}\sigma_i$-subgaussian by Gaussian concentration 
\cite[Theorem 5.8]{BLM13}. But as we assumed that $\sigma_i$ are 
nonincreasing, we evidently have 
$$
	\max_{i\ge e^p}\sigma_i\sqrt{p} \le
	\sigma_{\lfloor e^p\rfloor} \sqrt{p} \le
	\max_{i\le e^p}\sigma_i\sqrt{\log(i+1)} \lesssim
	\mathbf{E}\|G_{\le e^p}\|_{\ell_p}.
$$
We can therefore easily conclude that
$$
	\sqrt{p}\,\Bigg(\sum_{i\ge e^p} \sigma_i^p\Bigg)^{1/p}
	\lesssim \mathbf{E}\|G\|_{\ell_p},
$$
and the proof is completed by averaging the two lower bounds on
$\mathbf{E}\|G\|_{\ell_p}$.
\end{proof}

We are now ready to prove the explicit bound given in Theorem 
\ref{thm:schatten}.

\begin{cor}
\label{cor:expl}
Under the assumptions of Theorem \ref{thm:schatten}, we have for all
$2\le p\le\infty$
\begin{align*}
	&\mathbf{E}\Bigg[
	\Bigg(\sum_i\Bigg(\sum_j X_{ij}^2\Bigg)^{p/2}\Bigg)^{1/p}
	\Bigg] \asymp \mbox{} 
	\\
	&\qquad
	\Bigg(\sum_i\Bigg(\sum_j b_{ij}^2\Bigg)^{p/2}\Bigg)^{1/p} +
	\max_{i\le e^p}\max_j b_{ij}^*\sqrt{\log i} +
	\sqrt{p}\Bigg(
	\sum_{i\ge e^p}\max_j {b_{ij}^{*}}^p
	\Bigg)^{1/p}.
\end{align*}
\end{cor}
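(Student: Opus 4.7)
The plan is to compute $\mathbf{E}\|X\|_{\ell_p(\ell_2)} = \mathbf{E}\|R\|_{\ell_p}$, where $R_i := (\sum_j X_{ij}^2)^{1/2}$ is the Euclidean norm of the $i$th row and $\sigma_i := \max_j b_{ij}$, by reducing essentially to Lemma \ref{lem:gausslp} for Gaussian vectors of dimension $n$, together with the special case $p = \infty$ from \cite[Theorem 1.2]{vH17}. The preliminary input is that $R_i$ is $\sigma_i$-Lipschitz in the underlying Gaussians, so by Gaussian concentration $R_i - \mathbf{E}R_i$ is $\sigma_i$-subgaussian; combining $\mathbf{E}[R_i^2] = \sum_j b_{ij}^2$ with $\mathrm{Var}(R_i) \le \sigma_i^2$ and $\mathbf{E}R_i \ge \sqrt{2/\pi}\,\sigma_i$ (picking $j^*(i)$ with $b_{i,j^*(i)} = \sigma_i$ yields $R_i \ge |X_{i,j^*(i)}|$), a short case analysis gives $\mathbf{E}R_i \asymp (\sum_j b_{ij}^2)^{1/2}$. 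Observe also that the nonincreasing rearrangement of $(\sigma_i)$ coincides with $\sigma_i^* = \max_j b_{ij}^*$.

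For the upper bound I would apply the triangle inequality $\mathbf{E}\|R\|_{\ell_p} \le \|\mathbf{E}R\|_{\ell_p} + \mathbf{E}\|R - \mathbf{E}R\|_{\ell_p}$; the first summand is at most $(\sum_i(\sum_j b_{ij}^2)^{p/2})^{1/p}$, and since $R - \mathbf{E}R$ has $\sigma_i$-subgaussian coordinates (independence is not required), the second is controlled by the remaining two terms via the subgaussian upper bound in Lemma \ref{lem:gausslp}.

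The lower bound requires three matching estimates. The first, $\mathbf{E}\|R\|_{\ell_p} \gtrsim (\sum_i(\sum_j b_{ij}^2)^{p/2})^{1/p}$, follows from Jensen applied to $\|\cdot\|_{\ell_p}$. The second, $\mathbf{E}\|R\|_{\ell_p} \gtrsim \max_{i \le e^p}\sigma_i^*\sqrt{\log i}$, uses $\|R\|_{\ell_p} \ge \|R\|_{\ell_\infty}$ together with \cite[Theorem 1.2]{vH17}. For the third, $\mathbf{E}\|R\|_{\ell_p} \gtrsim \sqrt{p}(\sum_{i \ge e^p}(\sigma_i^*)^p)^{1/p} =: C$, I plan to restrict to the tail rows $T := \{i : i \ge e^p\}$ in the $\sigma^*$-sorted labeling, set $F := \|(R_i)_{i \in T}\|_{\ell_p}$, and combine a moment bound with Gaussian concentration. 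Using $R_i \ge |X_{i,j^*(i)}|$ and $(\mathbf{E}|g|^p)^{1/p} \asymp \sqrt{p}$, the moment lower bound reads $(\mathbf{E}F^p)^{1/p} \gtrsim \sqrt{p}(\sum_{i \in T}\sigma_i^p)^{1/p} = C$. A direct calculation shows $F$ is $O(\sigma_{e^p}^*)$-Lipschitz in the independent Gaussian parameters defining $X$, because every coefficient $b_{ij}$ touching a row of $T$ is bounded by $\sigma_i \le \sigma_{e^p}^*$, and for $p \ge 2$ the row-wise Lipschitz estimates sum to a global $\ell_2$ bound via the embedding $\ell_p \hookrightarrow \ell_2$ on finite sequences. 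Gaussian concentration then yields $\|F - \mathbf{E}F\|_{L^p} \lesssim \sigma_{e^p}^*\sqrt{p}$ and hence $\mathbf{E}F \ge cC - C'\sigma_{e^p}^*\sqrt{p}$. Since monotonicity of $\sigma_i^*$ gives $\sigma_{e^p}^*\sqrt{p} \lesssim \max_{i \le e^p}\sigma_i^*\sqrt{\log i}$, the subtracted term is dominated by the second lower bound and can be absorbed into $\mathbf{E}\|R\|_{\ell_p}$, yielding $\mathbf{E}\|R\|_{\ell_p} \gtrsim C$.

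The hard part will be this third lower bound. A naive moment-plus-concentration argument yields an error $\sigma_{e^p}^*\sqrt{p}$ that could a priori be as large as $C$ itself; the resolution is to localize the Lipschitz constant of the truncated norm $F$ to $\sigma_{e^p}^*$ rather than the much larger $\sigma_1^*$ by exploiting the sorted structure, so that this error is dominated by the independently established second lower bound and can be absorbed into the left-hand side.
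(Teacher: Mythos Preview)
Your proposal is correct. The upper bound is identical to the paper's argument: triangle inequality, Jensen for $\|\mathbf{E}R\|_{\ell_p}$, and the subgaussian half of Lemma~\ref{lem:gausslp} for the centered part.

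For the lower bound the route differs slightly. The paper obtains the second and third terms in one stroke: it minorizes $R_i \ge |X_{ik_i}|$ where $b_{ik_i}=\max_j b_{ij}$, and then applies the full two-sided Lemma~\ref{lem:gausslp} to the vector $(X_{ik_i})_i$, with a brief remark that the at-most-two-to-one dependence caused by symmetry only affects constants. You instead treat the two terms separately: the $\sqrt{\log i}$ term via $\|R\|_{\ell_p}\ge\|R\|_{\ell_\infty}$ and the known $p=\infty$ case from \cite{vH17}, and the $\sqrt{p}$ term by a direct moment-plus-concentration estimate on $F=\|(R_i)_{i\in T}\|_{\ell_p}$, with the subtracted error absorbed back into the $\sqrt{\log i}$ term. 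Your key observation---that $F$ is $O(\sigma_{e^p}^*)$-Lipschitz because every coefficient $b_{ij}$ touching a tail row satisfies $b_{ij}\le\sigma_i^*\le\sigma_{e^p}^*$---is sound (symmetry causes each independent Gaussian to be counted at most twice, which only costs a factor $\sqrt{2}$). In effect you are reproving, at the level of the row norms $R_i$, the same concentration step that appears inside the proof of Lemma~\ref{lem:gausslp}. The paper's route is a bit shorter; yours has the minor advantage of sidestepping the dependence issue among the $X_{ik_i}$ entirely.
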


\begin{proof}
Let us begin with the upper bound. Define the vector
$Z=(Z_1,\ldots,Z_n)$ with $Z_i := (\sum_j X_{ij}^2)^{1/2}$.
Then we can estimate using the triangle inequality
$$
	\mathbf{E}\Bigg[ 
        \Bigg(\sum_i\Bigg(\sum_j X_{ij}^2\Bigg)^{p/2}\Bigg)^{1/p}
        \Bigg] =
	\mathbf{E}\|Z\|_{\ell_p}
	\le
	\|\mathbf{E}Z\|_{\ell_p} +
	\mathbf{E}\|Z-\mathbf{E}Z\|_{\ell_p}.
$$
It follows easily from Jensen's inequality that
$$
	\|\mathbf{E}Z\|_{\ell_p}  \le
	\Bigg(\sum_i\Bigg(\sum_j b_{ij}^2\Bigg)^{p/2}\Bigg)^{1/p}.
$$
On the other hand, by Gaussian concentration \cite[Theorem
5.8]{BLM13}, the random 
variable $Z_i-\mathbf{E}Z_i$ is $\max_j b_{ij}$-subgaussian. We 
therefore obtain
$$
	\mathbf{E}\|Z-\mathbf{E}Z\|_{\ell_p} \lesssim
	\max_{i\le e^p}\max_j b_{ij}^*\sqrt{\log(i+1)} +
	\sqrt{p}\Bigg(
	\sum_{i\ge e^p}\max_j {b_{ij}^{*}}^p
	\Bigg)^{1/p}
$$
by Lemma \ref{lem:gausslp}, which completes the proof of the upper bound. 
(In the final bound, we estimated $\log(i+1)\le 1+\log i$ for aesthetic 
reasons; this does not entail any loss provided we slightly increase
the constant in front of the first two terms.)

To prove the lower bound, denote by $k_i$ the entry of the $i$th row of 
the matrix that has the largest variance, that is, $b_{ik_i}=\max_jb_{ij}$.
Then we have
\begin{align*}
	\mathbf{E}\Bigg[ 
        \Bigg(\sum_i\Bigg(\sum_j X_{ij}^2\Bigg)^{p/2}\Bigg)^{1/p}
        \Bigg] &\ge
	\mathbf{E}\Bigg[ 
        \Bigg(\sum_i|X_{ik_i}|^p\Bigg)^{1/p}
        \Bigg] \\ &\gtrsim
        \max_{i\le e^p}\max_j b_{ij}^*\sqrt{\log i} +
        \sqrt{p}\Bigg(
        \sum_{i\ge e^p}\max_j {b_{ij}^{*}}^p
        \Bigg)^{1/p}
\end{align*}
using Lemma \ref{lem:gausslp}. To be precise, note that the random 
variables $X_{ik_i}$ and $X_{jk_j}$ are either independent or identically 
equal, the latter happening if $k_i=j$ and $k_j=i$. However, as each 
independent variable appears at most twice in the vector $(X_{ik_i})$, it 
is readily verified that the conclusion of Lemma \ref{lem:gausslp} remains 
valid in this setting modulo a suitable modification of the universal 
constants.

On the other hand, we can lower bound by Jensen's inequality
$$
	\mathbf{E}\Bigg[ 
        \Bigg(\sum_i\Bigg(\sum_j X_{ij}^2\Bigg)^{p/2}\Bigg)^{1/p}
        \Bigg] =
	\mathbf{E}\|Z\|_{\ell_p}
	\ge
	\|\mathbf{E}Z\|_{\ell_p}.
$$
Now note that by the Gaussian Poincar\'e inequality \cite[Theorem 
3.20]{BLM13}, we have
$$
	\sum_j b_{ij}^2 = \mathbf{E}[Z_i^2] =
	\mathbf{E}[Z_i]^2 + \mathrm{Var}(Z_i) \le
	\mathbf{E}[Z_i]^2  +\max_{j}b_{ij}^2 \lesssim
	\mathbf{E}[Z_i]^2.
$$
We therefore obtain
$$
	\mathbf{E}\Bigg[ 
        \Bigg(\sum_i\Bigg(\sum_j X_{ij}^2\Bigg)^{p/2}\Bigg)^{1/p}
        \Bigg] 
	\gtrsim
	\mathbf{E}\Bigg[ 
        \Bigg(\sum_i\Bigg(\sum_j b_{ij}^2\Bigg)^{p/2}\Bigg)^{1/p}
        \Bigg],
$$
and the proof is concluded by averaging the two lower bounds.
\end{proof}

\subsection{The operator norm}
\label{sec:opnorm}

The aim of this section is to prove Theorem \ref{thm:schatten} in the case 
$p=\infty$. In fact, this turns out to be the most interesting case: in 
the next section, we will see that the proof of Theorem \ref{thm:schatten} 
for arbitrary $2\le p\le\infty$ follows rather quickly by combining the 
case $p=\infty$ with Theorem \ref{thm:moment}. In the following, we will 
denote the operator norm as $\|X\|:=\|X\|_{S_\infty}$ for notational 
simplicity.

Before we describe the main construction behind the proof, we must first 
recall two suboptimal bounds on $\mathbf{E}\|X\|$. First, we observe that 
a useful bound can already be deduced from Theorem \ref{thm:moment}; for 
the operator norm, this result was obtained (by a significantly simpler 
variant of the proof of Theorem \ref{thm:moment}) in \cite{BvH16}.

\begin{thm}[\cite{BvH16}, Theorem 1.1]
\label{thm:bvh}
Let $X$ be an $n\times n$ symmetric matrix with 
$X_{ij}=b_{ij}g_{ij}$, where $b_{ij}\ge 0$ and $g_{ij}$ are i.i.d.\ 
standard Gaussian variables for $i\ge j$. Then
$$
	\mathbf{E}\|X\|\lesssim
	\max_i\sqrt{\sum_j b_{ij}^2} + 
	\max_{ij}b_{ij}\sqrt{\log n}.
$$
\end{thm}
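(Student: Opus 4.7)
The plan is to deduce this result directly from the moment estimate of Theorem \ref{thm:moment} by applying the moment method at the logarithmic scale $p\sim\log n$; no separate argument is needed, because the full strength of Proposition \ref{prop:holder} is already more than sufficient in this regime.

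First I would convert the operator norm to a trace moment. Since $X$ is symmetric and $2p$ is even, $\mathrm{Tr}[X^{2p}]$ is the sum of the $2p$-th powers of the absolute values of the eigenvalues of $X$, so $\|X\|^{2p}\le\mathrm{Tr}[X^{2p}]$. Combining this with Jensen's inequality yields
$$
\mathbf{E}\|X\|\le(\mathbf{E}\|X\|^{2p})^{1/2p}\le(\mathbf{E}[\mathrm{Tr}[X^{2p}]])^{1/2p}
$$
for every positive integer $p$.

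Second, I would plug in Theorem \ref{thm:moment} and crudely bound each of the two $\ell_{2p}$-type quantities on its right-hand side by the corresponding $\ell_\infty$-quantity, at the cost of the usual factor $n^{1/2p}$ coming from the $n$ summands:
$$
\Bigg(\sum_i\Bigg(\sum_j b_{ij}^2\Bigg)^{p}\Bigg)^{1/2p}\le n^{1/2p}\max_i\sqrt{\sum_j b_{ij}^2},\qquad
\Bigg(\sum_i\max_j b_{ij}^{2p}\Bigg)^{1/2p}\le n^{1/2p}\max_{ij}b_{ij}.
$$

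Finally I would optimize the free parameter by taking $p=\lceil\log n\rceil$. Then $n^{1/2p}\le\sqrt{e}$ is a universal constant and $\sqrt{p}\lesssim\sqrt{\log n}$, so chaining the three steps produces exactly
$$
\mathbf{E}\|X\|\lesssim\max_i\sqrt{\sum_j b_{ij}^2}+\sqrt{\log n}\,\max_{ij}b_{ij},
$$
as desired. The hard part has already been done: all the real work sits in Theorem \ref{thm:moment}, and the present theorem is simply the standard moment-method corollary obtained by balancing $p$ against $\log n$. This is essentially the argument of \cite{BvH16}, which used the simpler dimension-dependent shape estimate recorded in the remark following the proof of Theorem \ref{thm:moment} rather than Proposition \ref{prop:holder}; in the present setting the stronger Proposition \ref{prop:holder} works equally well and the crude bound $n^{1/2p}\le\sqrt{e}$ absorbs any loss.
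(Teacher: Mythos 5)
Your proposal is correct and is essentially identical to the paper's own proof: both deduce the bound from Theorem \ref{thm:moment} via $\mathbf{E}\|X\|\le(\mathbf{E}[\mathrm{Tr}[X^{2p}]])^{1/2p}$, bound the $\ell_{2p}$-sums by $n^{1/2p}$ times the corresponding maxima, and take $p\sim\log n$ so that $n^{1/2p}$ is a universal constant. Your closing remark about the original argument of \cite{BvH16} using the simpler shape estimate is also accurate.
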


\begin{proof}
As $\mathbf{E}\|X\|\le(\mathbf{E}\|X\|_{S_{2p}}^{2p})^{1/2p}$,
we can apply Theorem \ref{thm:moment} for any $p$. We conclude by 
observing that $\|x\|_{\ell_p}\asymp \|x\|_{\ell_\infty}$ for 
$x\in\mathbb{R}^n$ if we choose $p\sim\log n$.
\end{proof}

The problem with this bound is that it is dimension-dependent, while the 
sharp result of Theorem \ref{thm:schatten} is inherently dimension-free. 
Unfortunately, as is indicated by Corollary \ref{cor:moment}, any bound on 
the $p$th moment with $p\sim\log n$ must necessarily depend on the 
dimension $n$. One therefore cannot hope to obtain a dimension-free bound 
by an improvement of the moment method. Instead, an entirely different 
approach was introduced in \cite{vH17} to obtain a dimension-free bound on 
$\mathbf{E}\|X\|$ through the theory of Gaussian processes. We recall the 
following result without proof.

\begin{thm}[\cite{vH17}, paragraph after Theorem 1.4]
\label{thm:vh}
In the setting of Theorem \ref{thm:bvh}
$$
	\mathbf{E}\|X\|\lesssim
	\max_i\sqrt{\sum_j b_{ij}^2} + 
	\max_{ij}b_{ij}^*\log i.
$$
\end{thm}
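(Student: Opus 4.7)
I would bound $\mathbf{E}\|X\|$ by expressing it, up to a universal constant, as the expected supremum of the centered Gaussian quadratic form $Z_v := v^{\mathsf T} X v$ over the unit sphere $v \in S^{n-1}$, and then estimating this supremum via a generic chaining argument on the sphere endowed with its intrinsic Gaussian pseudometric $d(v,w)^2 = \sum_{ij} b_{ij}^2(v_i v_j - w_i w_j)^2$. After relabeling rows and columns so that $\sigma_i := \max_j b_{ij}$ is nonincreasing in $i$, the definition of the target quantity $v_* := \max_{ij} b_{ij}^*\log i$ yields the key decay estimate $\sigma_i \lesssim v_*/\log i$ for $i \ge 2$, encoding that rows with large index are uniformly small in sup-norm. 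This is the only extra structural information beyond $\sigma := \max_i\sqrt{\sum_j b_{ij}^2}$ that I would feed into the chaining.

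I would then chain along dyadic scales. Partition $[n]$ into blocks $I_k := [2^k, 2^{k+1})$, on which $\max_{i \in I_k}\sigma_i \lesssim v_*/k$, and approximate each $v \in S^{n-1}$ by its coordinate truncation $\pi_k v$ to $[1, 2^k]$; the truncations live in low-dimensional spheres that admit efficient Euclidean $\varepsilon$-nets. The chaining increment $Z_v - Z_{\pi_k v}$ is controlled by the operator norms of those blocks of $X$ whose rows or columns lie outside $[1, 2^k]$. Applying Theorem \ref{thm:bvh} to these blocks, together with the row-max decay $v_*/k$, bounds each block's norm by $\sigma + (v_*/k)\sqrt{\log 2^{k+1}} \asymp \sigma + v_*/\sqrt{k}$. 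Combining this with the Gaussian chaining entropy at scale $k$ and summing over scales in a telescoping geometric fashion yields the claimed $\sigma + v_*$.

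The main obstacle is the simultaneous balancing of the two competing costs at each dyadic scale: the chaining entropy grows with the dimension of the approximating sphere, while the row-max decay provides only a $1/k$ savings, and aligning them requires a full majorizing-measure construction rather than a naive triangle inequality over blocks. The cost of this alignment is precisely the source of the suboptimal logarithm $\log i$ in place of the optimal $\sqrt{\log i}$, and it reflects a structural obstruction to purely Gaussian-process methods that is discussed in \cite[section 4.3]{vH17}. It is this obstruction that the core-block decomposition of the present paper is designed to bypass, recovering the sharp factor $\sqrt{\log i}$ in Theorem \ref{thm:main}.
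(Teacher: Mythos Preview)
The paper does not prove this statement: it is quoted from \cite{vH17} and explicitly ``recalled without proof.'' So there is no proof in the paper to compare against; the relevant benchmark is the argument in \cite{vH17}, which proceeds via Gaussian comparison (a Slepian--Fernique type construction of a dominating process), not via a direct generic-chaining bound combined with the moment estimate of Theorem~\ref{thm:bvh}.

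Your sketch is in the right conceptual neighborhood---expressing $\mathbf{E}\|X\|$ as the supremum of the Gaussian process $v\mapsto v^{\mathsf T}Xv$ and exploiting the decay $\sigma_i\lesssim v_*/\log i$---but the argument as written has a real gap at the summation step. You assert that the $k$th-scale contribution is of order $\sigma + v_*/\sqrt{k}$ (from applying Theorem~\ref{thm:bvh} to a block of dimension $\sim 2^k$ with entry bound $\sim v_*/k$), and then that these contributions ``sum in a telescoping geometric fashion'' to $\sigma + v_*$. But $\sum_k k^{-1/2}$ is not geometric and does not converge; and in a chaining bound the per-scale increments are summed, not maximized, so without a genuine geometric decay you cannot conclude. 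Moreover, the link you draw between the chaining increment $Z_v - Z_{\pi_k v}$ and the operator norm of a block of $X$ is not the same as bounding the expected supremum of those increments over the net at scale $k$; the entropy cost has to be paid somewhere, and you have not specified where. Finally, invoking Theorem~\ref{thm:bvh} inside the argument is a hybrid that is foreign to the method of \cite{vH17}; that paper works purely with Gaussian-process comparison and does not appeal to moment bounds. The block-decomposition-plus-Theorem~\ref{thm:bvh} idea is precisely what the present paper uses in the proof of Theorem~\ref{thm:latala}, but there it takes Theorem~\ref{thm:vh} as an \emph{input}, so that route cannot be used to establish Theorem~\ref{thm:vh} itself.
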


The advantage of Theorem \ref{thm:vh} is that it is dimension-free, in a 
manner strongly reminiscent of the sharp result of Theorem 
\ref{thm:schatten}. However, the result is suboptimal in a different 
sense, as its second term is too large ($\log i$ rather than $\sqrt{\log i}$).

We will presently show that the sharp bound of Theorem \ref{thm:schatten} 
in the case $p=\infty$ can be obtained by efficiently exploiting the two 
suboptimal bounds of Theorems~\ref{thm:bvh} and \ref{thm:vh}. This will be 
achieved by decomposing the matrix $X$ into different pieces: dominant 
pieces of small dimension, which are controlled optimally by the 
dimension-dependent bound, and a small remainder of large dimension, which 
can be controlled by the dimension-free bound. This decomposition not only 
allows us to conclude the proof, but also provides significant insight 
into the structure of large random matrices with bounded operator norm.

We now proceed to develop the details of the construction. Fix a matrix 
$X$ as in Theorem \ref{thm:schatten}, and define for the remainder of this 
subsection the quantities
$$
	a := 
        \max_i\sqrt{\sum_j b_{ij}^2},\qquad\qquad
	b :=
        \max_{ij}b_{ij}^*\sqrt{\log i}.
$$
Our aim is to prove the upper bound of Theorem \ref{thm:schatten}
in the case $p=\infty$, that is:

\begin{thm}
\label{thm:latala}
$\mathbf{E}\|X\|\lesssim a+b$.
\end{thm}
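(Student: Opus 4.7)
The plan is to exploit the two suboptimal bounds of Theorems \ref{thm:bvh} and \ref{thm:vh} in complementary regimes, as foreshadowed in the introduction. First I would relabel rows and columns so that $\beta_i := \max_j b_{ij}$ is nonincreasing; the hypothesis then reads $\beta_i \le b/\sqrt{\log i}$ for every $i \ge 2$, while $\beta_1 \le a$. Next I would partition $[n]$ into consecutive strata $B_k = (n_{k-1}, n_k]$ of doubly-exponentially growing lengths, say $\log n_k \asymp 4^k$, so that $\sqrt{\log n_k} \asymp 2^k$ and every $i \in B_k$ satisfies $\beta_i \lesssim b/2^{k-1}$. This partition induces a decomposition $X = X_c + X_r$, where the core $X_c$ is block-diagonal with blocks $X_c^{(k)}$ on $B_k \times B_k$ and the remainder $X_r$ is supported on off-diagonal-block positions.

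For the core, each $X_c^{(k)}$ is an independent symmetric Gaussian matrix of dimension at most $n_k$, with entries of magnitude $\lesssim b/2^{k-1}$ and row $\ell_2$-norms at most $a$. Theorem \ref{thm:bvh} applied to block $k$ gives
\[
\mathbf{E}\|X_c^{(k)}\| \lesssim a + (b/2^{k-1})\sqrt{\log n_k} \lesssim a + b
\]
uniformly in $k$, by the choice of $n_k$. To pass from individual blocks to $\|X_c\| = \max_k\|X_c^{(k)}\|$, I would use Gaussian concentration: each $\|X_c^{(k)}\|$ is $O(b/2^{k-1})$-subgaussian around its mean, and the blocks are independent, so a union bound over $k$ followed by tail integration yields $\mathbf{E}\|X_c\| \lesssim a + b$.

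For the remainder I would apply Theorem \ref{thm:vh}. The row $\ell_2$-norms of $X_r$ are bounded by $a$, so it remains to control $\max_i \tilde\beta_i^*\log i$, where $\tilde\beta_i$ denotes the row maxima of $X_r$ after an appropriate reordering. The whole point of the structural decomposition (anticipated in the paper as Lemma \ref{lem:core}) is to ensure that the remainder's row maxima decay essentially like $b/\log i$ rather than the merely $b/\sqrt{\log i}$ bound inherited from $\beta_i$. Granted this, $\max_i \tilde\beta_i^* \log i \lesssim b$, so Theorem \ref{thm:vh} gives $\mathbf{E}\|X_r\| \lesssim a + b$, and the triangle inequality yields $\mathbf{E}\|X\| \le \mathbf{E}\|X_c\| + \mathbf{E}\|X_r\| \lesssim a + b$.

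The main obstacle is precisely the construction of this structural decomposition. A naive partition into consecutive doubly-exponential strata does not suffice for the remainder: a row $i \in B_k$ may still have off-block entries of magnitude $\beta_i \sim b/\sqrt{\log i}$ in earlier columns $j \in B_{k'}$ with $k' < k$, which falls short of the $b/\log i$ decay required to feed Theorem \ref{thm:vh}. To force the residual row maxima down to $b/\log i$, the strata must be designed more delicately --- for instance, by greedily absorbing into each forming block not only the next consecutive rows but also any rows and columns carrying off-block entries above a suitable threshold, so that what is left in $X_r$ is systematically smaller than the ambient row-max bound. Once the right structural lemma is in place, the combination of Theorems \ref{thm:bvh} and \ref{thm:vh} closes the proof.
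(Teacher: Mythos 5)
Your high-level strategy is exactly the paper's: block-diagonalize after a rearrangement, control the blocks with the dimension-dependent bound of Theorem \ref{thm:bvh}, control the remainder with the dimension-free bound of Theorem \ref{thm:vh}, and sum the block fluctuations using independence and concentration. You have also correctly located the crux: a consecutive doubly-exponential stratification of the sorted row maxima leaves off-block entries of size $\beta_i\sim b/\sqrt{\log i}$, which is useless for Theorem \ref{thm:vh}. But the proposal stops at precisely this point — the structural lemma is invoked (``granted this'', ``once the right structural lemma is in place'') rather than proved — and the route you sketch toward it would not produce what you need.

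The missing idea is that the off-block decay does not come from property \eqref{B} at all, but from property \eqref{A}: since every column has $\ell_2$-norm at most $a$, each column $j$ has fewer than $k$ entries exceeding $a/\sqrt{k}$. The paper's algorithm alternates between your step (grabbing the rows with the largest $\max_j b_{ij}$, which yields the $b2^{-k/2}$ bound inside the blocks) and a second step that, for each already-selected column $j$, absorbs the rows carrying the $N_{k-2}$ largest entries of that column. After this absorption, any entry left outside the enlarged blocks in those columns is at most $a/\sqrt{N_{k-2}}=a2^{-2^{k-3}}$ — doubly exponentially small in $k$, hence vastly smaller than your target $b/\log i$, and crucially measured against $a$ rather than $b$. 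Without invoking \eqref{A} there is no mechanism to beat $b/\sqrt{\log i}$ off the blocks: reordering alone cannot manufacture extra decay, and a ``suitable threshold'' for greedy absorption cannot be chosen without a counting bound on how many entries per column are large, which is exactly what \eqref{A} supplies. Two further points your outline elides: the absorbed rows make consecutive blocks overlap, so the block-diagonal part must be split into two genuinely block-diagonal matrices (the sets $E_1$ and $E_2$ of the paper); and the block sizes $N_k=2^{2^k}$ must be chosen so that $N_k-N_{k-1}-N_{k-1}N_{k-2}>1$, i.e.\ so that there is room in each stratum for both the \eqref{B}-type and the \eqref{A}-type selections. As it stands, the proposal is a correct reduction of the theorem to Lemma \ref{lem:core}, but not a proof of it.
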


Theorem \ref{thm:latala} completes the proof of Theorem \ref{thm:schatten} 
in the case $p=\infty$, as the corresponding lower bound follows trivially 
from Lemma \ref{lem:schlp} and Corollary \ref{cor:expl}.

At the heart of the proof lies the observation that the quantities $a$ and 
$b$ provide different types of control on the coefficients $b_{ij}$.
On the one hand, by definition, 
\begin{equation}
\tag{B}\label{B}
	\card\bigg\{i:\max_j b_{ij} > \frac{b}{\sqrt{\log k}}\bigg\}
	< k
\end{equation}
for every $k$. On the other hand, for every given $j$, 
\begin{equation}
\tag{A}\label{A}
	\card\bigg\{i:b_{ij} > \frac{a}{\sqrt{k}}\bigg\}< k
\end{equation}
for every $k$. In other words, the maximal entry across columns 
$\max_j b_{ij}$ decays, when rearranged in decreasing order, as 
$b/\!\sqrt{\log i}$; while inside each given column $j$, the entries 
$b_{ij}$ decay, when rearranged in decreasing order, as $a/\!\sqrt{i}$.
Of course, the ordering of entries in each column is different, so we 
cannot simultaneously rearrange all columns in decreasing order. 
Instead, we construct one rearrangement that benefits from both 
properties by alternating between \eqref{B} and \eqref{A}.

We now describe the construction. We choose a permutation 
$(i_1,\ldots,i_n)$ of the indices $\{1,\ldots,n\}$ by a simple 
algorithm. After $k$ steps of the algorithm, we will have selected exactly 
$N_k:=2^{2^k}$ indices which we denote by $I_k:=\{i_1,\ldots,i_{N_k}\}$.
\vskip.1cm
\begin{description}[leftmargin=2.4cm,style=nextline]
\itemsep\abovedisplayskip
\item[\bf Step 1] Choose $N_1$ indices $i$ for which the
quantity $\max_jb_{ij}$ is largest.
\item[\bf Step k.\ (a)] Among the remaining indices, 
choose $N_{k-1}N_{k-2}$ indices $i$ that contain the
$N_{k-2}$ largest entries $b_{ij}$ of each column $j\in I_{k-1}$.
\item[\bf Step k.\ (b)] Among the remaining indices, choose
$N_k-N_{k-1}-N_{k-1}N_{k-2}$ indices $i$ for which the
quantity $\max_jb_{ij}$ is largest.
\end{description}
\vskip.1cm
Note that this iteration is well defined as $N_k-N_{k-1}-N_{k-1}N_{k-2}>1$
for all $k\ge 2$, and as $|I_k|=N_k$ for each $k$ by construction. The 
algorithm terminates when we have selected all $n$ indices. In the
remainder of the proof, we will always work with the rearrangement of the
indices constructed by this algorithm. We will therefore assume from
now on, without loss of generality, that the algorithm selects the 
identity permutation $i_k=k$ (otherwise we may permute the rows and 
columns of the matrix such that this is the case, which does not alter
any of the quantities of interest; this is just a relabeling of the 
indices of the matrix).

The construction we have just given ensures that we can control the 
magnitudes of the entries in different parts of the matrix.

\begin{lem}
\label{lem:core}
After rearranging as above, the following hold for all $k\ge 1$:
\begin{enumerate}[leftmargin=*,label=\roman*.]
\item $b_{ij}\lesssim b2^{-k/2}$ when $i\ge N_k$.
\item $b_{ij}\le a2^{-2^{k-2}}$ when $j\le N_k$ and
$i\ge N_k+N_kN_{k-1}$.
\end{enumerate}
By symmetry, the identical bounds hold for $b_{ji}$.
\end{lem}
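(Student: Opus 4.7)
The plan is to read off both inequalities directly from the two selection rules used by the algorithm. Step $k.(b)$ sorts indices by the row-maxima $\max_j b_{ij}$, which pairs with property \eqref{B}, while step $(k+1).(a)$ captures the $N_{k-1}$ largest entries of each column $j\in I_k$, which pairs with property \eqref{A}. Before starting, I record a direct arithmetic consequence of $N_l=N_{l-1}^2$: one has $N_l-N_{l-1}-N_{l-1}N_{l-2}\ge\frac{1}{4}N_l$ for all $l\ge 2$, so step $k.(b)$ alone contributes at least $N_k/4$ indices to $I_k$ (and step~$1$ contributes all $N_1$).

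For (i), let $M_k$ denote the smallest value of $\max_j b_{ij}$ among the step $k.(b)$ additions. Since step $k.(b)$ picks the largest remaining row-maxima, every index $i\notin I_k$ satisfies $\max_j b_{ij}\le M_k$; conversely, the $\ge N_k/4$ indices selected in step $k.(b)$ themselves all have $\max_j b_{ij}\ge M_k$. Property \eqref{B} then forces
\[
        M_k\le\frac{b}{\sqrt{\log(N_k/4)}}=\frac{b}{\sqrt{(2^k-2)\log 2}}\lesssim b\cdot 2^{-k/2}
\]
for $k\ge 2$, which gives (i); the case $k=1$ is identical after replacing step $k.(b)$ by step~$1$ itself.

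For (ii), the construction of step $(k+1).(a)$ ensures that for every $j\in I_k$ the set $I_k$ together with the step $(k+1).(a)$ additions contains the $N_{k-1}$ largest entries of column $j$ taken over $i\notin I_k$. Any $i$ outside this enriched set (that is, any $i>N_k+N_kN_{k-1}$ after the relabeling) therefore has $b_{ij}$, for $j\le N_k$, bounded above by the $N_{k-1}$-th largest entry of column $j$ restricted to $i\notin I_k$; since $m$ distinct positive ranks are always at least $m$, this restricted $N_{k-1}$-th largest is no bigger than the overall $N_{k-1}$-th largest of column $j$, which by \eqref{A} with $m=N_{k-1}$ is at most $a/\sqrt{N_{k-1}}=a\cdot 2^{-2^{k-2}}$. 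The symmetric bounds on $b_{ji}$ follow from $b_{ij}=b_{ji}$.

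The main obstacle is part (i): one must carefully disentangle the role of the (a)-type selections, which may pick rows with small $\max_j b_{ij}$, from that of the (b)-type selections, which are the only ones sorted by row-maxima. The quantitative input that a constant fraction of $I_k$ comes from (b)-steps is essential for \eqref{B} to deliver the sharp $2^{-k/2}$ decay rather than a strictly weaker rate; part (ii) is then a comparatively straightforward pigeonhole argument combined with \eqref{A}.
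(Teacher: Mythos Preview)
Your proof is correct and follows essentially the same approach as the paper. The only minor variation is in part (i): the paper sums the contributions of step~$1$ and all steps $s.(b)$ for $2\le s\le k$, observing that this telescoping sum gives at least $N_{k-1}$ indices chosen by row-maximum, and then applies \eqref{B} with that count; you instead use only the final step $k.(b)$, observing that it alone contributes $\ge N_k/4$ such indices, which already suffices (and is in fact a slightly larger count for $k\ge 2$). Part (ii) is the same argument as the paper's, just phrased via the restricted $N_{k-1}$-th largest rather than directly via the overall one.
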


\begin{proof}
After $k$ steps of the algorithm, we have selected at least
$$
	N_1 + \sum_{s=2}^k (N_s-N_{s-1}-N_{s-1}N_{s-2}) \ge
	N_{k-1}
$$
indices $i$ for which the quantity $\max_j b_{ij}$ is largest. Therefore,
by \eqref{B}, we have $b_{ij}\le b/\!\sqrt{\log N_{k-1}}$ whenever
$i\ge N_k$. This proves part $i$.
On the other hand, after $k+1$ steps of the algorithm, the $N_{k-1}$ 
largest entries $b_{ij}$ of each of the first $N_k$ columns $j$ are 
contained in the first $N_k+N_kN_{k-1}$ rows $i$. Therefore, when
$j\le N_k$ and $i\ge N_k+N_kN_{k-1}$, we have $b_{ij}\le 
a/\!\sqrt{N_{k-1}}$ by \eqref{A}. This proves part $ii$.
\end{proof}

Lemma \ref{lem:core} provides a lot of information about the structure of 
the matrix $X$. Near the diagonal, the matrix consists of a sequence of 
blocks of dimension $\sim N_k$ whose entries are of order $\lesssim 
b2^{-k/2}$. On the other hand, away from the diagonal, the entries of the 
matrix decay much more rapidly at a rate $\lesssim a2^{-2^{k-2}}$. We can 
therefore decompose our matrix into a block-diagonal part and a small 
remainder.
More precisely, let us partition the indices $(i,j)\in[1,n]^2$ of $X$ into 
three 
parts:
$$
	E_1 = [1,M_1]^2\cup
	\bigcup_{k\ge 1}[N_{2k},M_{2k+1}]^2,\qquad
	E_2 =
	\bigcup_{k\ge 1}[N_{2k-1},M_{2k}]^2
	\backslash E_1,
$$
and
$$
	E_3 = [1,n]^2\backslash (E_1\cup E_2),
$$
where we defined $M_k := N_{k}+N_{k}N_{k-1}$.
This partition is illustrated in Figure \ref{fig:opdecomp}.
\begin{figure}[t]
\begin{center}
\vskip.2cm
\begin{tikzpicture}
\draw (0,0) rectangle (5,-5);

\draw (1,-1) rectangle (2.3,-2.3);
\draw (3,-3) rectangle (4.3,-4.3);

\draw[fill=white] (0,0) rectangle (1.3,-1.3);
\draw[fill=white] (2,-2) rectangle (3.3,-3.3);
\draw[fill=white] (4,-4) rectangle (5,-5);

\draw (.65,-.65) node {$E_1$};
\draw (2.65,-2.65) node {$E_1$};
\draw (4.5,-4.5) node {$E_1$};

\draw (1.65,-1.65) node {$E_2$};
\draw (3.65,-3.65) node {$E_2$};

\draw (1.25,-3.75) node {$E_3$};
\draw (3.75,-1.25) node {$E_3$};

\draw (-.05,0) to (-.1,0) node[left] {\scriptsize $1$};
\draw (-.05,-1) to (-.1,-1) node[left] {\scriptsize $N_1$};
\draw (-.05,-1.3) to (-.1,-1.3) node[left] {\scriptsize $M_1$};
\draw (-.05,-2) to (-.1,-2) node[left] {\scriptsize $N_2$};
\draw (-.05,-2.3) to (-.1,-2.3) node[left] {\scriptsize $M_2$};
\draw (-.05,-3) to (-.1,-3) node[left] {\scriptsize $N_3$};
\draw (-.05,-3.3) to (-.1,-3.3) node[left] {\scriptsize $M_3$};
\draw (-.1,-4) node[left] {\scriptsize $\vdots\,$};
\draw (-.05,-5) to (-.1,-5) node[left] {\scriptsize $n$};

\draw (0,.05) to (0,.1) node[above] {\scriptsize $1$};
\draw (1,.05) to (1,.1) node[above] {\scriptsize $N_1\,$};
\draw (1.3,.05) to (1.3,.1) node[above] {\scriptsize $\,M_1$};
\draw (2,.05) to (2,.1) node[above] {\scriptsize $N_2\,$};
\draw (2.3,.05) to (2.3,.1) node[above] {\scriptsize $\,M_2$};
\draw (3,.05) to (3,.1) node[above] {\scriptsize $N_3\,$};
\draw (3.3,.05) to (3.3,.1) node[above] {\scriptsize $\,M_3$};
\draw (4.15,.1) node[above] {\scriptsize $\cdots$};
\draw (5,.05) to (5,.1) node[above] {\scriptsize $n$};
\end{tikzpicture}
\vskip.2cm
\end{center}
\caption{\small Illustration of the matrix decomposition used in the
proof of Theorem \ref{thm:latala}. The figure is drawn for clarity on a 
$\log\log$ scale.\label{fig:opdecomp}}
\end{figure}
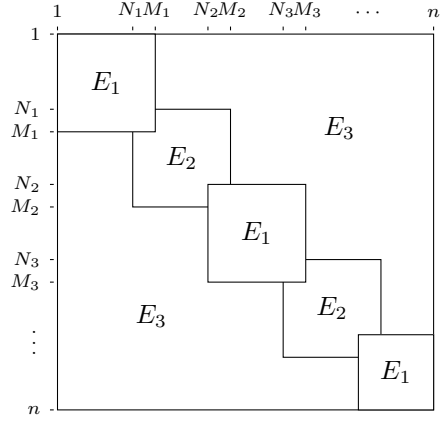

We are now ready to complete the proof of Theorem \ref{thm:latala}.

\begin{proof}[Proof of Theorem \ref{thm:latala}]
We decompose $X=U+V+W$ where 
$$
	U_{ij} :=X_{ij}1_{(i,j)\in E_1},\qquad
	V_{ij} :=X_{ij}1_{(i,j)\in E_2},\qquad
	W_{ij} :=X_{ij}1_{(i,j)\in E_3}.
$$
It suffices to bound the norms of each of these matrices.

\vskip\abovedisplayskip\noindent
\textbf{The norms of $U$ and $V$.}
The crucial point is that $U$ and $V$ are block-diagonal matrices, so that 
their norm is the maximum of the norms of the blocks. Let us denote by 
$U_1,U_2,\ldots$ the (disjoint) diagonal blocks of $U$. Then $U_1$ is a 
square matrix of dimension $M_1$ and $U_{k+1}$ is a square matrix of 
dimension $M_{2k+1}-N_{2k}+1\le N_{2k+2}$ for $k\ge 1$. Moreover, by Lemma 
\ref{lem:core}, the maximal coefficient $b_{ij}$ in $U_k$ is 
$\lesssim b2^{-k}$ for $k>1$, while we can trivially bound
$b_{ij}\lesssim b$ for coefficients in $U_1$. Thus Theorem 
\ref{thm:bvh} and the Gaussian Poincar\'e inequality \cite[Theorem 
3.20]{BLM13} yield
$$
	\mathbf{E}\|U_k\| \lesssim
	a + b2^{-k}\sqrt{\log N_{2k+2}}
	\lesssim a+b,\qquad
	\mathrm{Var}(\|U_k\|)^{1/2}\lesssim b2^{-k}.
$$
We can therefore estimate
\begin{align*}
	\mathbf{E}\|U\| =
	\mathbf{E}\max_{k}\|U_k\| 
	&\le
	\max_k\mathbf{E}\|U_k\| +
	\mathbf{E}\max_k|\,\|U_k\|-\mathbf{E}\|U_k\|\,|
\\
	&\le
	\max_k\mathbf{E}\|U_k\| +
	\sum_k
	\mathbf{E}|\,\|U_k\|-\mathbf{E}\|U_k\|\,|
\\
	&\le
	\max_k\mathbf{E}\|U_k\| +
	\sum_k \mathrm{Var}(\|U_k\|)^{1/2} \lesssim a+b.
\end{align*}
The same bound on $\mathbf{E}\|V\|$ follows from the identical argument.

\vskip\abovedisplayskip\noindent
\textbf{The norm of $W$.} We now have to show that the remaining part of 
the matrix is small. To this end, let us verify that the following holds:
$$
	\max_j b_{ij}1_{(i,j)\in E_3} \lesssim a2^{-2^{k-2}}
	\quad\mbox{whenever }M_k\le i<M_{k+1}
$$
holds for all $k\ge 1$.
Indeed, it is readily read off from the definition of $E_3$ than when
$M_k\le i<N_{k+1}$ and $(i,j)\in E_3$, then either $j\le N_k$ or
$j\ge M_{k+1}$. In either case, Lemma \ref{lem:core} shows that
$b_{ij}\lesssim a2^{-2^{k-2}}$. On the other hand, if
$N_{k+1}\le i<M_{k+1}$, then either $j\le N_k$ or
$j\ge M_{k+2}$. Again, Lemma \ref{lem:core} shows that
$b_{ij}\lesssim a2^{-2^{k-2}}$. Combining these cases yields the claim.
But we can now estimate by Theorem \ref{thm:vh}
\begin{align*}
	\mathbf{E}\|W\| 
	&\lesssim a +
	\max_{k\ge 1}
	\max_{M_k\le i<M_{k+1}} 
	\log i \,\max_j b_{ij}1_{(i,j)\in E_3}  \\
	&\lesssim a + 
	\max_{k\ge 1}
	a2^{-2^{k-2}}\log M_{k+1} \lesssim a,
\end{align*}
where we have trivially bounded $\max_{i<M_1}\log i\,
\max_j b_{ij}1_{(i,j)\in E_3}\lesssim a$.
Combining the estimates for the norms of $U,V,W$ completes the proof.
\end{proof}

\subsection{Proof of Theorem \ref{thm:schatten}}

Now that we have proved Theorem \ref{thm:schatten} in the case $p=\infty$, 
it remains to extend the conclusion to arbitrary $2\le p\le\infty$. 
Perhaps somewhat surprisingly, this does not require significant 
additional work. The reason is already visible in the formula of Lemma 
\ref{lem:gausslp}: the behavior of the expected $\ell_p$ norm 
$\mathbf{E}\|G\|_{\ell_p}$ of a Gaussian vector $G$ interpolates between 
the sharp bound on the uniform norm $\mathbf{E}\|G\|_{\ell_\infty}$ and 
the sharp bound for the moments $(\mathbf{E}\|G\|_{\ell_p}^p)^{1/p}$. We 
will show that an analogous situation occurs for Schatten norms: we will 
bound $\mathbf{E}\|X\|_{S_p}$ by decomposing the random matrix $X$ into 
two parts, one of which is controlled by the sharp bound on the 
operator norm obtained in the previous section, and the other is 
controlled by the sharp moment bound provided by Corollary 
\ref{cor:moment}.

\begin{proof}[Proof of Theorem \ref{thm:schatten}]
The lower bound on $\mathbf{E}\|X\|_{S_p}$ follows trivially from Lemma 
\ref{lem:schlp}, so it remains to prove the upper bound. We may assume 
without loss of generality that the rows and columns of $X$ have been 
permuted so that $\max_jb_{ij}$ is nonincreasing, that is, 
$b_{ij}=b_{ij}^*$. We begin by decomposing the matrix as
$$
	X = Y+Z,\qquad\quad
	Z_{ij}:=X_{ij}\mathbf{1}_{\min(i,j)\ge e^p}.
$$
This decomposition is illustrated in Figure \ref{fig:spdecomp}.
\begin{figure}[t]
\begin{center}
\vskip.2cm
\begin{tikzpicture}
\draw (0,0) rectangle (5,-5);

\draw (1,-1) rectangle (5,-5);

\draw (.55,-.55) node {$Y$};
\draw (3,-3) node {$Z$};

\draw (-.05,0) to (-.1,0) node[left] {\scriptsize $1$};
\draw (-.05,-1) to (-.1,-1) node[left] {\scriptsize $e^p$};
\draw (-.05,-5) to (-.1,-5) node[left] {\scriptsize $n$};

\draw (0,.05) to (0,.1) node[above] {\scriptsize $1$};
\draw (1,.05) to (1,.1) node[above] {\scriptsize $e^p\,$};
\draw (5,.05) to (5,.1) node[above] {\scriptsize $n$};
\end{tikzpicture}
\vskip.2cm
\end{center}
\caption{\small Matrix decomposition used in the
proof of Theorem \ref{thm:schatten}.\label{fig:spdecomp}}
\end{figure}
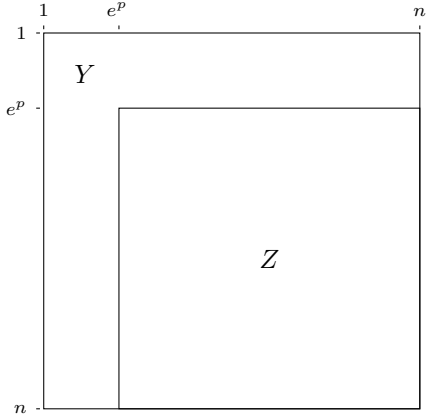

Consider first the matrix $Y$. Observe that $\mathrm{rank}(Y)\le 2e^p$,
so we have $\|Y\|_{S_p}\le 2^{1/p}e\|Y\|_{S_\infty}$. We can therefore 
apply Theorem \ref{thm:latala} and Corollary \ref{cor:expl} to estimate
$$
	\mathbf{E}\|Y\|_{S_p} \lesssim 
	\mathbf{E}\Bigg[
	\max_i\sqrt{\sum_j Y_{ij}^2}
	\Bigg] \le
	\mathbf{E}\Bigg[
	\Bigg(\sum_i\Bigg(\sum_j X_{ij}^2\Bigg)^{p/2}\Bigg)^{1/p}
	\Bigg].
$$
We now consider the matrix $Z$. Corollary \ref{cor:moment} and
Remark \ref{rem:strange} yield
$$
	\mathbf{E}\|Z\|_{S_p} \lesssim
	\Bigg(\sum_i\Bigg(\sum_j b_{ij}^2\Bigg)^{p/2}\Bigg)^{1/p}
	+ \sqrt{p}\,\Bigg(\sum_{i\ge e^p} \max_j b_{ij}^{p}\Bigg)^{1/p}.
$$
Therefore, by Corollary \ref{cor:expl}, we obtain once more
$$
	\mathbf{E}\|Z\|_{S_p} \lesssim
	\mathbf{E}\Bigg[
        \Bigg(\sum_i\Bigg(\sum_j X_{ij}^2\Bigg)^{p/2}\Bigg)^{1/p}
        \Bigg].   
$$
It remains to apply the triangle inequality
$\mathbf{E}\|X\|_{S_p}\le\mathbf{E}\|Y\|_{S_p}+\mathbf{E}\|Z\|_{S_p}$.
\end{proof}

\subsection{Proof of Corollaries \ref{cor:inf} and \ref{cor:schattentail}}

The proof of Corollary \ref{cor:inf} follows easily from
Theorem \ref{thm:schatten} and standard arguments.

\begin{proof}[Proof of Corollary \ref{cor:inf}]
It is an elementary fact \cite[section II.26]{AG93} that an infinite 
matrix $X$ defines a (necessarily unique) bounded operator on 
$\ell_2(\mathbb{N})$  
if and only if $\sup_n\|X_{[n]}\|_{S_\infty}<\infty$, where we defined the 
finite submatrices $X_{[n]}:=(X_{ij})_{i,j\le n}$.

Now note that by Theorem \ref{thm:schatten}, Remark \ref{rem:nonzeromean}, 
and the monotone convergence theorem, we have
$\mathbf{E}[\sup_n\|X_{[n]}\|_{S_\infty}]<\infty$ if and only if
$$
	\max_i\sum_j b_{ij}^2 <\infty,\qquad
	\max_{ij}b_{ij}^*\sqrt{\log i}<\infty,\qquad
	\|(a_{ij})\|_{S_\infty}<\infty.
$$
Therefore, if the latter conditions hold $X$ defines a bounded operator 
a.s. Conversely, if any of these conditions fails, then 
$\mathbf{E}[\sup_n\|X_{[n]}\|_{S_\infty}]=\infty$. By a classical zero-one 
law for Gaussian measures \cite{LS70}, it follows that in fact 
$\sup_n\|X_{[n]}\|_{S_\infty}=\infty$ a.s. Thus in this case $X$ is 
unbounded as an operator on $\ell_2(\mathbb{N})$ a.s.
\end{proof}

We now turn to the proof of Corollary \ref{cor:schattentail}.

\begin{proof}[Proof of Corollary \ref{cor:schattentail}]
We begin by proving the tail bounds. The lower bound is trivial by
Lemma \ref{lem:schlp}, so it suffices to consider the upper bound.

In the following we denote by $m$ the median of $\|X\|_{S_p}$. By the 
Gaussian isoperimetric theorem \cite[Theorem 10.17]{BLM13}, we can 
estimate for all $t\ge 0$
$$
	\mathbf{P}[\|X\|_{S_p}-m\ge t] \le
	\mathbf{P}\bigg[\sqrt{2}\max_{ij}b_{ij}\,g\ge t\bigg],
$$
where $g\sim N(0,1)$. Moreover, as the median of a nonnegative random 
variable is bounded by twice its mean (this is a simple consequence of 
Markov's inequality), we have $m\le 2\mathbf{E}\|X\|_{S_p}\le 
K\,\mathbf{E}\|X\|_{\ell_p(\ell_2)}$ for a universal constant $K$ by 
Theorem \ref{thm:schatten}.

We consider separately two cases. Suppose first that $t\ge 2m$. Then
$$
	\mathbf{P}[\|X\|_{S_p} \ge t] \le
	\mathbf{P}[\|X\|_{S_p} - m \ge t/2] \le
	\mathbf{P}\bigg[2\sqrt{2} \max_{ij}b_{ij}g \ge t\bigg] \le
	\mathbf{P}[2\sqrt{2}\|X\|_{\ell_p(\ell_2)}\ge t],
$$
where we used that $\|X\|_{\ell_p(\ell_2)}\ge X_{ij}\sim b_{ij}g$ for any 
$i,j$.

Now consider the case $t\le 2m$. Using $m\le 
K\,\mathbf{E}\|X\|_{\ell_p(\ell_2)}$, we can estimate
$$
	\mathbf{P}[\|X\|_{\ell_p(\ell_2)} \ge t/4K] \ge
	\mathbf{P}[\|X\|_{\ell_p(\ell_2)} \ge 
	\tfrac{1}{2}\mathbf{E}\|X\|_{\ell_p(\ell_2)}] \ge
	\frac{1}{4}\frac{(\mathbf{E}\|X\|_{\ell_p(\ell_2)})^2}{
	\mathbf{E}\|X\|_{\ell_p(\ell_2)}^2}
$$
by the Paley-Zygmund inequality. On the other hand, we can estimate
$$
	\mathrm{Var}(\|X\|_{\ell_p(\ell_2)})\le \max_{ij}b_{ij}^2 
	= \tfrac{\pi}{2}\max_{ij}(\mathbf{E}|X_{ij}|)^2
	\le
	\tfrac{\pi}{2}(\mathbf{E}\|X\|_{\ell_p(\ell_2)})^2
$$
by the Gaussian Poincar\'e inequality \cite[Theorem 3.20]{BLM13}.
We have therefore shown that 
$$
	\mathbf{P}[\|X\|_{\ell_p(\ell_2)} \ge t/2K] \ge
	\tfrac{1}{2\pi+4} \ge
	\tfrac{1}{2\pi+4}\mathbf{P}[\|X\|_{S_p}\ge t]
$$
for $t\le 2m$. Combining the above bounds yields the desired tail bound.

It remains to deduce the resulting bounds 
for convex functions. The lower bound is again trivial. To prove the upper 
bound, note first that
$$
	\mathbf{E}\Phi(\|X\|_{S_p}) =
	\Phi(0) + 
	\int_0^\infty \Phi'(t)\,
	\mathbf{P}[\|X\|_{S_p}\ge t]\,dt
$$
for any increasing function $\Phi$. We conclude immediately that
$$
	\mathbf{E}\Phi(\|X\|_{S_p}) \le
	C\,\mathbf{E}\Phi(C\|X\|_{\ell_p(\ell_2)})
$$
for every nonnegative increasing function $\Phi$. As compared to the bound 
stated in Corollary \ref{cor:schattentail}, we have not assumed $\Phi$ is 
convex, but we have an additional constant $C$ in front of the expectation 
on the right-hand side. To eliminate it, let $\Psi$ be an increasing 
convex function, and choose $\Phi(t) = \Psi(Ct)-\Psi(0)$. By convexity, we 
have $\Phi(t)\ge C\,\Phi(t/C)$ for all $t\ge 0$. Substituting into the 
above bound yields
$$
	C\, \mathbf{E}\Phi(\|X\|_{S_p}/C)
	\le
	\mathbf{E}\Phi(\|X\|_{S_p})
	\le
	C\, \mathbf{E}\Phi(C\|X\|_{\ell_p(\ell_2)}),
$$
which yields upon rearranging
$$
	\mathbf{E}\Psi(\|X\|_{S_p}) \le
	\mathbf{E}\Psi(C^2\|X\|_{\ell_p(\ell_2)}).
$$
Thus the claimed bound follows
for a sufficiently large universal constant $C$.
\end{proof}

\section{Extensions and complements}
\label{sec:ext}

\subsection{Non-symmetric matrices}
\label{sec:nonsym}

For simplicity, we have restricted our attention so far to symmetric matrices 
$X$ with independent entries above the diagonal. However, the analogous 
results for non-symmetric matrices can be readily deduced in complete 
generality, modulo universal constants. Let us state, for example, the 
following analogue of Corollary \ref{cor:schattentail} for non-symmetric 
matrices.

\begin{cor}
\label{cor:schattentailasym}
Let $X$ be an $n\times m$ matrix with
$X_{ij}=b_{ij}g_{ij}$, where $b_{ij}\ge 0$ and $g_{ij}$ are i.i.d.\
standard Gaussian variables for $i\le n$, $j\le m$. 
Then
$$
	\mathbf{P}[\|X\|_{\ell_p(\ell_2)}+
	\|X^*\|_{\ell_p(\ell_2)}\ge 2t] \le
	\mathbf{P}[\|X\|_{S_p}\ge t] \le
	C\,\mathbf{P}[\|X\|_{\ell_p(\ell_2)}
	+\|X^*\|_{\ell_p(\ell_2)}\ge t/C]
$$
for all $t\ge 0$ and $2\le p\le\infty$, where $C$ is a universal constant.
\end{cor}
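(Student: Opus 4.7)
The plan is to reduce the non-symmetric statement to the already-established symmetric version (Corollary \ref{cor:schattentail}) by means of the standard Hermitian dilation. Specifically, I would form the $(n+m)\times(n+m)$ symmetric Gaussian matrix
$$
	\tilde X := \begin{pmatrix} 0 & X \\ X^* & 0 \end{pmatrix},
$$
which has independent centered Gaussian entries above the diagonal (with zero variance in the two diagonal blocks), and therefore falls within the scope of Corollary \ref{cor:schattentail}.

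The next step is to express the two quantities of interest in terms of the analogous quantities for $\tilde X$. Since the singular values of $\tilde X$ are exactly $\pm\sigma_i(X)$ together with a number of zeros, one has the identity $\|\tilde X\|_{S_p} = 2^{1/p}\|X\|_{S_p}$. Similarly, computing the $\ell_2$-norm of each row of $\tilde X$, one finds
$$
	\|\tilde X\|_{\ell_p(\ell_2)}^p = \|X\|_{\ell_p(\ell_2)}^p + \|X^*\|_{\ell_p(\ell_2)}^p,
$$
and the elementary inequalities for $p \ge 1$ and $a,b \ge 0$
$$
	\tfrac{1}{2}(a+b) \le \max(a,b) \le (a^p+b^p)^{1/p} \le a+b
$$
yield $\tfrac{1}{2}(\|X\|_{\ell_p(\ell_2)}+\|X^*\|_{\ell_p(\ell_2)}) \le \|\tilde X\|_{\ell_p(\ell_2)} \le \|X\|_{\ell_p(\ell_2)}+\|X^*\|_{\ell_p(\ell_2)}$.

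The lower bound of the corollary is then immediate without invoking the Hermitian dilation: Lemma \ref{lem:schlp} applied to both $X$ and $X^*$ (noting $\|X\|_{S_p}=\|X^*\|_{S_p}$) gives $\|X\|_{S_p} \ge \max(\|X\|_{\ell_p(\ell_2)},\|X^*\|_{\ell_p(\ell_2)}) \ge \tfrac{1}{2}(\|X\|_{\ell_p(\ell_2)}+\|X^*\|_{\ell_p(\ell_2)})$, which is exactly the stated lower bound. For the upper bound, applying Corollary \ref{cor:schattentail} to $\tilde X$ yields
$$
	\mathbf{P}[\|X\|_{S_p} \ge t] \le \mathbf{P}[\|\tilde X\|_{S_p} \ge t]
	\le C\,\mathbf{P}[\|\tilde X\|_{\ell_p(\ell_2)} \ge t/C]
	\le C\,\mathbf{P}[\|X\|_{\ell_p(\ell_2)}+\|X^*\|_{\ell_p(\ell_2)} \ge t/C],
$$
completing the argument.

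There is essentially no technical obstacle here; the only minor point to verify is that $\tilde X$ genuinely satisfies the hypotheses of Corollary \ref{cor:schattentail} (symmetric with independent entries above the diagonal), which is clear since its strictly upper-triangular entries are either identically zero or else a distinct independent entry of $X$. The rest is routine.
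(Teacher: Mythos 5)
Your proposal is correct and follows essentially the same route as the paper: the paper's proof also forms the Hermitian dilation $\tilde X$, uses the identities $\|\tilde X\|_{S_p}^p = 2\|X\|_{S_p}^p$ and $\|\tilde X\|_{\ell_p(\ell_2)}^p = \|X\|_{\ell_p(\ell_2)}^p + \|X^*\|_{\ell_p(\ell_2)}^p$, and applies Corollary \ref{cor:schattentail} to $\tilde X$. Your extra care with the elementary comparison between $(a^p+b^p)^{1/p}$ and $a+b$ and the direct derivation of the lower bound from Lemma \ref{lem:schlp} are fine and consistent with what the paper leaves implicit.
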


\begin{proof}
We use a standard device that already appeared in the proof of Lemma 
\ref{lem:complext}. Let $\tilde X$ be the $(n+m)\times(n+m)$ symmetric 
matrix defined by
$$
	\tilde X =
	\begin{pmatrix}
	0 & X \\
	X^* & 0
	\end{pmatrix},
$$
and note that
$$
	\|\tilde X\|_{S_{p}}^{p} =
	\|\tilde X^2\|_{S_{p/2}}^{p/2} = 2\|X\|_{S_{p}}^{p},\qquad
	\|\tilde X\|_{\ell_p(\ell_2)}^p =
	\|X\|_{\ell_p(\ell_2)}^p +
	\|X^*\|_{\ell_p(\ell_2)}^p.	
$$
The conclusion now follows readily by applying Corollary 
\ref{cor:schattentail} to $\tilde X$.
\end{proof}

The only drawback of the approach of Corollary \ref{cor:schattentailasym} 
is that it may not capture sharp constants. As most of the bounds in this 
paper are sharp only up to universal constants to begin with, this entails 
no further loss in our main results. The exception to this statement, 
however, is the moment bound of Theorem \ref{thm:moment} which turns out 
to be highly accurate: it has the optimal constant in its leading term, a 
fact that can be of significant importance in certain applications (an 
example of such an application will be given in section \ref{sec:bdd} 
below). If we wish to obtain an analogously sharp result in the 
non-symmetric case, the proof must be modified to account for the 
non-symmetric structure. This results in the following bound.

\begin{thm}
\label{thm:momentrect}
Let $X$ be an $n\times m$ matrix with 
$X_{ij}=b_{ij}g_{ij}$, where $b_{ij}\ge 0$ and $g_{ij}$ are i.i.d.\ 
standard Gaussian variables for $i\le n$, $j\le m$. Then for $p\in\mathbb{N}$
$$
	(\mathbf{E}\|X\|_{S_{2p}}^{2p})^{\frac{1}{2p}} \le
	\Bigg(\sum_i\Bigg(\sum_j b_{ij}^2\Bigg)^p\Bigg)^{\frac{1}{2p}} + 
	\Bigg(\sum_j\Bigg(\sum_i b_{ij}^2\Bigg)^p\Bigg)^{\frac{1}{2p}}
	+ 4\sqrt{p}\,\Bigg(\sum_{i,j} b_{ij}^{2p}\Bigg)^{\frac{1}{2p}}.
$$
\end{thm}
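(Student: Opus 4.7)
The plan is to mirror the proof of Theorem~\ref{thm:moment} almost verbatim, replacing the cyclic trace expansion of $\mathbf{E}\,\mathrm{Tr}[X^{2p}]$ by the bipartite walk expansion of $\mathbf{E}\|X\|_{S_{2p}}^{2p}=\mathbf{E}\,\mathrm{Tr}[(XX^*)^p]$. The naive symmetrization via the dilation used in the proof of Corollary~\ref{cor:schattentailasym}, combined with Theorem~\ref{thm:moment}, already yields a bound of the correct shape but loses a factor of $2^{1-1/(2p)}$ in each of the leading terms; to obtain the sharp leading constant $1$ one must argue directly on the bipartite walk expansion. First, I would write
\[
  \mathbf{E}\,\mathrm{Tr}[(XX^*)^p] = \sum_{\mathbf{u}\in[n]^p,\,\mathbf{v}\in[m]^p}\mathbf{E}\bigl[X_{u_1v_1}X_{u_2v_1}X_{u_2v_2}\cdots X_{u_pv_p}X_{u_1v_p}\bigr]
\]
as a sum over closed walks of length $2p$ in the complete bipartite graph $K_{n,m}$; by the usual parity argument only \emph{even bipartite} walks contribute. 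Group these by their bipartite shape $\mathbf{s}$ (relabeling row and column vertices separately in order of appearance), and let $m_r(\mathbf{s})$ and $m_c(\mathbf{s})$ denote the numbers of distinct row and column vertices visited, so that $m_r+m_c\le p+1$ for every even bipartite walk.

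The combinatorial heart of the proof is a bipartite analogue of Theorem~\ref{thm:holder}: for any connected bipartite graph $G$ with bipartition sizes $m_r,m_c$ and edge weights $k_e\ge 2$,
\[
  W^{\mathbf{k}}(G) \le
  \Bigl(\sum_i\bigl(\textstyle\sum_j b_{ij}^2\bigr)^{|\mathbf{k}|/2}\Bigr)^{\!\frac{2(m_r-1)}{|\mathbf{k}|}}
  \Bigl(\sum_j\bigl(\textstyle\sum_i b_{ij}^2\bigr)^{|\mathbf{k}|/2}\Bigr)^{\!\frac{2(m_c-1)}{|\mathbf{k}|}}
  \Bigl(\sum_{i,j}b_{ij}^{|\mathbf{k}|}\Bigr)^{\!1-\frac{2(m_r+m_c-2)}{|\mathbf{k}|}}.
\]
I would prove this by the same two-step architecture as in Section~\ref{sec:holder}: reduce to the case where $G$ is a bipartite spanning tree by the method of Lemma~\ref{lem:tree}, and then iteratively prune leaves applying H\"older's inequality as in Lemma~\ref{lem:bizarrobl}. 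The new wrinkle is that each leaf is either a row vertex or a column vertex; pruning a row leaf generates a row-sum factor while pruning a column leaf generates a column-sum factor, and tracking these two families separately should yield precisely the split with exponents $m_r-1$ and $m_c-1$.

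With the bipartite H\"older inequality in hand, the proof closes by dimension compression just as in Theorem~\ref{thm:moment}. Setting $\sigma_r:=(\sum_i(\sum_j b_{ij}^2)^p)^{1/(2p)}$ and $\sigma_c:=(\sum_j(\sum_i b_{ij}^2)^p)^{1/(2p)}$ and normalizing $\sum_{i,j}b_{ij}^{2p}=1$, one obtains
\[
  \mathbf{E}\,\mathrm{Tr}[(XX^*)^p] \le \sum_{\mathbf{s}} \sigma_r^{2(m_r-1)}\sigma_c^{2(m_c-1)}\prod_{i\ge 1}\mathbf{E}[g^{2i}]^{n_{2i}(\mathbf{s})},
\]
which we compare with the explicit identity
\[
  \mathbf{E}\,\mathrm{Tr}[(YY^*)^p] = \sum_{\mathbf{s}} \frac{r_1!}{(r_1-m_r)!}\,\frac{r_2!}{(r_2-m_c)!}\prod_{i\ge 1}\mathbf{E}[g^{2i}]^{n_{2i}(\mathbf{s})}
\]
for an $r_1\times r_2$ matrix $Y$ of i.i.d.\ standard Gaussian entries. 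Choosing $r_1=\lfloor\sigma_r^2\rfloor+p+1$ and $r_2=\lfloor\sigma_c^2\rfloor+p+1$ makes the falling factorials dominate $\sigma_r^{2(m_r-1)}\sigma_c^{2(m_c-1)}$ term-by-term (using $m_r,m_c\le p+1$), giving $\mathbf{E}\,\mathrm{Tr}[(XX^*)^p]\le\mathbf{E}\|Y\|^{2p}$; a sharp rectangular Gaussian moment bound $\mathbf{E}[\|Y\|^{2p}]^{1/(2p)}\le\sqrt{r_1}+\sqrt{r_2}+C\sqrt{p}$ (the direct rectangular analogue of \cite[Lemma~2.2]{BvH16}, obtained by Gaussian concentration around Slepian's upper bound $\mathbf{E}\|Y\|\le\sqrt{r_1}+\sqrt{r_2}$) then yields the theorem.

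The main obstacle is the bipartite H\"older inequality. While the architecture closely parallels Section~\ref{sec:holder}, two Banach lattice structures (row sums and column sums) must now be balanced simultaneously, and the homogeneity argument that pinned down the exponents $\alpha_e=p_e$ at the end of Lemma~\ref{lem:bizarrobl} must be redone in an asymmetric setting in which each leaf-pruning step contributes to one of two distinct lattice norms depending on the side of the bipartition to which the pruned leaf belongs.
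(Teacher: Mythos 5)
Your overall architecture (bipartite shape expansion of $\mathrm{Tr}[(XX^*)^p]$, reduction to bipartite trees, leaf pruning via H\"older, dimension compression to a rectangular Wigner matrix with the sharp bound $\sqrt{r_1}+\sqrt{r_2}+O(\sqrt{p})$) is exactly the paper's route. However, your key combinatorial lemma --- the ``bipartite H\"older inequality'' --- is false as stated, and this is a genuine gap rather than a bookkeeping issue. First, the pairing of exponents with the two sides of the bipartition is backwards: pruning a \emph{row} leaf $u$ attached to a column vertex $v$ produces the factor $\sum_u b_{uv}^{k}$, i.e.\ a sum over the row index with the column index fixed, which is controlled by the \emph{column}-sum quantity $\sum_j(\sum_i b_{ij}^2)^{|\mathbf{k}|/2}$, not the row-sum one. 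So the exponent of the row-sum factor must be governed by the number of distinct \emph{column} vertices, and vice versa. Second, even after swapping the roles, the clean symmetric formula with exponents $2(m_r-1)/|\mathbf{k}|$ and $2(m_c-1)/|\mathbf{k}|$ fails on the simplest nontrivial shape: take the even bicycle $u_1\to v\to u_2\to v\to u_1$ ($p=2$, a doubled-edge star with $m_r=2$, $m_c=1$, $k_{e_1}=k_{e_2}=2$, $|\mathbf{k}|=4$) and $b_{ij}\equiv 1$ on an $n\times n$ matrix. Then $W^{\mathbf{k}}(G)=\sum_{u_1,u_2,v}b_{u_1v}^2b_{u_2v}^2=n^3$, while your bound evaluates to $(n^3)^{1/2}\cdot 1\cdot(n^2)^{1/2}=n^{5/2}$ (and the swapped version gives the same value). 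Since this shape carries the dominant contribution to $\mathbf{E}\|X\|_{S_4}^4$, the error is not repairable by adjusting constants.

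The paper's Proposition \ref{prop:holderrect} avoids a clean symmetric formula precisely because none exists: it (i) normalizes $\sum_{i,j}b_{ij}^{2p}=1$ and assumes WLOG $\sigma_{p,1}\ge\sigma_{p,2}$, (ii) does \emph{not} fix the exponents in advance but carries two families $1/\alpha_e,1/\beta_e$ through the pruning, constrained only by homogeneity ($1/\alpha_e+1/\beta_e=k_e/|\mathbf{k}|$) and by conservation of the number of row and column summations, and (iii) uses the ordering $\sigma_{p,1}\ge\sigma_{p,2}\ge 1$ to shift the leftover exponent weight onto the larger factor, arriving at the asymmetric bound $\sigma_{p,1}^{2m_1}\sigma_{p,2}^{2(m_2-1)}$ (total $\sigma$-exponent $2(m_1+m_2-1)$, not $2(m_1+m_2-2)$ as in your version). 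The subsequent dimension-compression step must then also be crossed accordingly ($r$ tied to $\sigma_{p,2}$ and $r'$ to $\sigma_{p,1}$ in the paper). Your closing remarks about the dilation trick losing a factor of roughly $2$ in the leading term, and about the rectangular Gaussian norm bound, are correct.
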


The adaptations of the proof of Theorem \ref{thm:moment} needed to obtain 
this bound are analogous to the ones developed in \cite[section 
3.1]{BvH16} in the simpler setting considered there. The present case 
requires essentially no new ideas, but a complete rewriting of the 
combinatorial arguments of section \ref{sec:moment} in the non-symmetric 
case would be very tedious. Instead, we will briefly sketch the necessary 
modifications in the rest of this section, leaving a detailed 
verification of the proofs to the interested reader.

\begin{proof}[Sketch of proof of Theorem \ref{thm:momentrect}]
Define
$$
	\sigma_{p,1} :=
	\Bigg(\sum_i\Bigg(\sum_j 
	b_{ij}^2\Bigg)^p\Bigg)^{1/2p},\qquad
	\sigma_{p,2} :=
	\Bigg(\sum_j\Bigg(\sum_i
	b_{ij}^2\Bigg)^p\Bigg)^{1/2p}.
$$
Throughout the proof, we may assume without loss of generality that 
$\sum_{i,j}b_{ij}^{2p}=1$ (by scaling) and that 
$\sigma_{p,1}\ge\sigma_{p,2}$ (if not, replace $X$ by $X^*$).
We begin by writing
\begin{align*}
	\mathbf{E}\|X\|_{S_{2p}}^{2p} &=
	\mathbf{E}[\mathrm{Tr}[(XX^*)^p]]\\& =
	\sum_{\mathbf{u}\in[n]^p}\sum_{\mathbf{v}\in[m]^p}
	\mathbf{E}[X_{u_1v_1}X_{u_2v_1}X_{u_2v_2}X_{u_3v_2}\cdots
	X_{u_pv_p}X_{u_1v_p}].
\end{align*}
We view $u_1\to v_1\to u_2\to v_2\to \cdots\to u_p\to v_p\to u_1$ as a  
cycle of length $2p$ in the complete undirected bipartite graph with $n$ 
left vertices and $m$ right vertices; we will refer to such a bipartite 
cycle for simplicity as a \emph{bicycle}. By symmetry of the Gaussian 
distribution, it is clear that each distinct edge $\{u_k,v_k\}$ or 
$\{u_{k+1},v_k\}$ that appears in the bicycle must be traversed an even 
number of times for a term in the sum to be nonzero; we will call bicycles 
with this property \emph{even}.

The \emph{shape} $\mathbf{s}(\mathbf{u},\mathbf{v})$ of a bicycle 
$(\mathbf{u},\mathbf{v})\in[n]^p\times[m]^p$ is defined by relabeling the 
left and right vertices in order of appearance. Let $\mathcal{S}_{2p}^{\rm 
bi}$ be the set of shapes of even bicycles of length $2p$, denote by
$n_i(\mathbf{s})$ the number of distinct edges that are traversed exactly 
$i$ times by $\mathbf{s}$, and denote by $m_1(\mathbf{s})$ and 
$m_2(\mathbf{s})$ the number of distinct right and left 
vertices, respectively, that are visited by $\mathbf{s}$. Then we can 
write
$$
        \mathbf{E}\|X\|_{S_{2p}}^{2p} =
	\sum_{\mathbf{s}\in\mathcal{S}_{2p}^{\rm bi}} 
	\prod_{i\ge 1}\mathbf{E}[g^{2i}]^{n_{2i}(\mathbf{s})}
	\sum_{(\mathbf{u},\mathbf{v}):
	\mathbf{s}(\mathbf{u},\mathbf{v})=\mathbf{s}}
	b_{u_1v_1}b_{u_2v_1}\cdots b_{u_pv_p}b_{u_1v_p},
$$
where $g\sim N(0,1)$. The following is a non-symmetric analogue of
Proposition \ref{prop:holder}.

\begin{prop}
\label{prop:holderrect}
Suppose $\sum_{i,j}b_{ij}^{2p}=1$ and $\sigma_{p,1}\ge\sigma_{p,2}$. Then 
for any $\mathbf{s}\in\mathcal{S}_{2p}^{\rm bi}$
$$
	\sum_{(\mathbf{u},\mathbf{v}):
	\mathbf{s}(\mathbf{u},\mathbf{v})=\mathbf{s}}
	b_{u_1v_1}b_{u_2v_1}\cdots b_{u_pv_p}b_{u_1v_p}
	\le \sigma_{p,1}^{2m_1(\mathbf{s})}
	\sigma_{p,2}^{2(m_2(\mathbf{s})-1)}.
$$
\end{prop}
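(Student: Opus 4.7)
The plan is to follow the combinatorial strategy of the symmetric case (reduce to trees, then iterate Hölder's inequality), adapted to account for the bipartite structure and the asymmetric roles of the row- and column-indexed norms $\sigma_{p,1},\sigma_{p,2}$. For a connected bipartite graph $G$ on left vertex set $[m_2]$ and right vertex set $[m_1]$ with edge labels $\mathbf{k}=(k_e)_{e\in E(G)}$, I define
$$
W^{\mathbf{k}}(G) := \sum_{\mathbf{u}\in[n]^{m_2},\,\mathbf{v}\in[m]^{m_1}}\,\prod_{e=\{i,j\}\in E(G)}b_{u_i,v_j}^{k_e}.
$$
Associating to each bicycle shape $\mathbf{s}$ the bipartite graph $G$ whose edges are the distinct ones traversed by $\mathbf{s}$ and whose labels $k_e$ record the traversal multiplicities (all $\ge 2$ since $\mathbf{s}$ is even), the sum in the proposition is bounded by $W^{\mathbf{k}}(G)$ with $|\mathbf{k}|=2p$.

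The first step is a bipartite analogue of Lemma~\ref{lem:tree}: by picking a spanning tree of $G$, ordering the vertices so that removing them one at a time preserves connectivity of the induced subtree, and iteratively applying Hölder to consolidate the multiple edges incident to each successive leaf into a single fat tree edge, one reduces to the case that $G$ is a bipartite tree $T$ with new labels $k_e'\ge 2$ summing to $2p$. The argument is essentially verbatim the one in the symmetric case, since spanning trees of bipartite graphs are automatically bipartite.

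The second step is a bipartite version of Lemma~\ref{lem:bizarrobl}: \emph{root $T$ at a left vertex} and iteratively peel leaves via Hölder. The crucial observation is that peeling a left leaf (whose edge has a right-vertex parent) produces a mixed-norm factor $\bigl(\sum_j(\sum_i b_{ij}^k)^{p_e}\bigr)^{1/p_e}$, which is column-outer and hence of $\sigma_{p,2}$-type; peeling a right leaf produces the transposed row-outer factor of $\sigma_{p,1}$-type. With the root on the left, the peeling consumes exactly $m_1$ right leaves and $m_2-1$ left leaves, producing a product of $m_1$ row-outer and $m_2-1$ column-outer mixed norms with Hölder exponents $1/p_e$ summing to $1$. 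Applying the elementary interpolation
$$
\sum_i\Bigl(\sum_j b_{ij}^k\Bigr)^{2p/k}\le\Bigl(\sum_i\Bigl(\sum_j b_{ij}^2\Bigr)^p\Bigr)^{2/k}\Bigl(\sum_i\max_j b_{ij}^{2p}\Bigr)^{(k-2)/k}
$$
from the end of the proof of Theorem~\ref{thm:holder} to each factor, and using the normalization $\sum_{i,j}b_{ij}^{2p}=1$ to bound the residual by $1$, converts each mixed norm into $\sigma_{p,1}$ or $\sigma_{p,2}$ raised to an appropriate Hölder power; the $1$-homogeneity argument of Lemma~\ref{lem:bizarrobl} then pins the total exponents down to $\sigma_{p,1}^{2m_1}\sigma_{p,2}^{2(m_2-1)}$.

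The main obstacle is the bookkeeping of the leaf-peeling induction: one must verify that rooting on the left (rather than the right) is the correct choice to recover the asymmetric exponent pattern in the claim, and that the homogeneity constraint combined with the Hölder arithmetic forces edge exponents to telescope into exactly $m_1$ copies of $\sigma_{p,1}$ and $m_2-1$ copies of $\sigma_{p,2}$ rather than some other distribution. The hypothesis $\sigma_{p,1}\ge\sigma_{p,2}$ itself plays no essential role in the proof of the proposition; it is a normalization convenience used further upstream in the proof of Theorem~\ref{thm:momentrect} to ensure that the bound takes the claimed form, since swapping the roles of rows and columns would give instead the counterpart $\sigma_{p,2}^{2m_2}\sigma_{p,1}^{2(m_1-1)}$.
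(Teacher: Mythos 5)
Your first two steps (bounding the shape sum by a bipartite $W^{\mathbf{k}}(G)$, reducing to bipartite trees via a spanning tree, and peeling leaves by H\"older while tracking which index is summed inside) are exactly the paper's route. The error is in your final paragraph: the hypothesis $\sigma_{p,1}\ge\sigma_{p,2}$ is \emph{not} a cosmetic normalization --- the proposition is false without it, and your claim that the peeling ``consumes exactly $m_1$ right leaves and $m_2-1$ left leaves'' does not hold. Take $p=2$ and the even bicycle $u_1\to v_1\to u_2\to v_1\to u_1$, so $m_1=1$, $m_2=2$ and the associated tree has its unique right vertex as an \emph{internal} vertex with two left leaves; no choice of root makes a right vertex into a leaf, both H\"older factors come out column-outer, and indeed $W^{\mathbf{k}}(G)=\sum_v\bigl(\sum_u b_{uv}^2\bigr)^2=\sigma_{2,2}^4$ identically, which exceeds the target $\sigma_{2,1}^2\sigma_{2,2}^2$ whenever $\sigma_{2,2}>\sigma_{2,1}$. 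Concretely, for the $n\times 1$ matrix with all entries $t$, $nt^4=1$, one has $\sigma_{2,1}^2=1$, $\sigma_{2,2}^2=\sqrt{n}$, and the shape sum equals $n(n-1)t^4=n-1>\sqrt{n}=\sigma_{2,1}^{2m_1}\sigma_{2,2}^{2(m_2-1)}$ for $n\ge 3$. So the statement itself needs $\sigma_{p,1}\ge\sigma_{p,2}$, and any proof that never invokes it must be wrong.

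What the paper does instead: it accepts that each edge may be pruned from either direction across the different terms generated by the induction, so each edge $e$ ends up contributing a row-outer factor to the power $1/\alpha_e$ \emph{and} a column-outer factor to the power $1/\beta_e$. Two conservation laws pin these down only in aggregate: homogeneity gives $1/\alpha_e+1/\beta_e=k_e/|\mathbf{k}|$, and counting left/right sums gives $\sum_e(|\mathbf{k}|/k_e\alpha_e+1/\beta_e)=m_1$ and $\sum_e(|\mathbf{k}|/k_e\beta_e+1/\alpha_e)=m_2$. After the interpolation step and the normalization $\sum_{i,j}b_{ij}^{2p}=1$ this yields $W^{\mathbf{k}}(G)\le\sigma_{p,1}^{2\sum_e|\mathbf{k}|/k_e\alpha_e}\,\sigma_{p,2}^{2\sum_e|\mathbf{k}|/k_e\beta_e}$, whose exponents are \emph{not} individually $2m_1$ and $2(m_2-1)$. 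Only at this point does the hypothesis $\sigma_{p,1}\ge\sigma_{p,2}$ enter, allowing one to shift $2\sum_e 1/\beta_e$ units of exponent from $\sigma_{p,2}$ to $\sigma_{p,1}$, which by the two conservation laws lands exactly on $\sigma_{p,1}^{2m_1}\sigma_{p,2}^{2(m_2-1)}$. To repair your argument you should replace the ``root on the left'' bookkeeping by this exponent-shifting step.
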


With this result in hand, the rest of the proof of Theorem 
\ref{thm:momentrect} is almost identical to that of Theorem 
\ref{thm:moment}. Indeed, repeating the same steps, we
can show that $\mathbf{E}[\mathrm{Tr}[(XX^*)^p]]\le
\mathbf{E}[\|Y\|^{2p}]$, where $Y$ is the $r\times r'$ matrix
whose entries are i.i.d.\ standard Gaussian with 
$r=\lceil\sigma_{p,2}^2+p/2\rceil$
and $r'=\lceil\sigma_{p,1}^2+p/2\rceil$. The conclusion of
Theorem \ref{thm:momentrect} now follows from a classical estimate on the 
norm of Gaussian matrices
$$
	\mathbf{E}[\|Y\|^{2p}]^{1/2p} \le \sqrt{r}+\sqrt{r'} + 2\sqrt{p}.
$$
The omitted steps are spelled out in more detail in
the proof of \cite[Theorem 3.1]{BvH16}.
\end{proof}

What remains is to prove Proposition \ref{prop:holderrect}, which proceeds 
mostly along the lines of Proposition \ref{prop:holder} but with 
additional bookkeeping issues. We will again merely sketch the necessary 
modifications to the proof of Proposition \ref{prop:holder}.

\begin{proof}[Sketch of proof of Proposition \ref{prop:holderrect}]
Let $\mathcal{G}_{m_1,m_2}$ be the set of undirected, connected 
bipartite graphs with $m_1$ right vertices and $m_2$ left vertices.
Exactly as in section \ref{sec:holder}, we 
associate to every shape $\mathbf{s}\in\mathcal{S}_{2p}^{\rm bi}$ with 
$m_1(\mathbf{s})=m_1$ and $m_2(\mathbf{s})=m_2$ a graph 
$G\in\mathcal{G}_{m_1,m_2}$ and weights $\mathbf{k}=(k_e)_{e\in E(G)}$, 
$k_e\ge 2$ with $|\mathbf{k}|=2p$ such that
$$
	\sum_{(\mathbf{u},\mathbf{v}):
	\mathbf{s}(\mathbf{u},\mathbf{v})=\mathbf{s}}
	b_{u_1v_1}b_{u_2v_1}\cdots b_{u_pv_p}b_{u_1v_p}
	\le W^{\mathbf{k}}(G).
$$
Here we define in the bipartite case
$$
	W^{\mathbf{k}}(G) :=
	\sum_{\mathbf{u}\in[n]^{m_2}}
	\sum_{\mathbf{v}\in[m]^{m_1}}
	\prod_{e\in E(G)}
	b_{u(e)v(e)}^{k_e},
$$
where we denote by $e=(u(e),v(e))$ the left and right vertices of 
the edge $e$. We must show that for every $G\in\mathcal{G}_{m_1,m_2}$
and $\mathbf{k}=(k_e)_{e\in E(G)}$, $k_e\ge 2$, $|\mathbf{k}|=2p$ we have
$$
	W^{\mathbf{k}}(G)
        \le \sigma_{p,1}^{2m_1}
        \sigma_{p,2}^{2(m_2-1)}
$$
under the assumptions of Proposition \ref{prop:holderrect}.

We begin by observing that it suffices to prove the claim only for
$G\in\mathcal{G}_{m_1,m_2}^{\rm tree}$, that is, when $G$ is a bipartite 
tree with $m_1$ right vertices and $m_2$ left vertices. The proof is 
identical to that of Lemma \ref{lem:tree}, so we do not comment on it 
further.

To obtain the requisite bound for trees, we now proceed exactly as in the 
proof of Lemma \ref{lem:bizarrobl} by iteratively pruning the leaves of 
the tree using H\"older's inequality. However, in the present case the 
coefficient matrix $b_{ij}$ is not symmetric, so we must keep track of 
which of its indices is being summed over in each step of the iteration. 
As the same edge may be pruned from either direction in the course of the 
induction, the final conclusion of the induction argument is a bound of 
the form
$$
	W^{\mathbf{k}}(G) \le
	\prod_{e\in E(G)}
	\Bigg[\sum_i\Bigg(\sum_j b_{ij}^{k_e}\Bigg)^{\frac{|\mathbf{k}|}{k_e}}
	\Bigg]^{\frac{1}{\alpha_e}}
	\Bigg[\sum_j\Bigg(\sum_i b_{ij}^{k_e}\Bigg)^{\frac{|\mathbf{k}|}{k_e}}
	\Bigg]^{\frac{1}{\beta_e}}.
$$
However, recall that in each step of the induction argument, the 
homogeneity in each variable $b_{ij}^{k_e}$ is preserved. Moreover, 
in the non-symmetric case, it is readily seen that the total number of 
sums over left and right vertices, respectively, is also preserved in each 
step of the induction argument (for example, if we bound
$\sum_{i,j,k}b_{ij}^2b_{kj}^2\le
[\sum_j(\sum_ib_{ij}^2)^2]^{1/2}
[\sum_j(\sum_k b_{kj}^2)^2]^{1/2}$, there are two sums over left vertices
and one sum over a right vertex on both sides of the inequality when we
count multiplicities given by the exponents.)
Thus the following must be valid for the final bound:
\begin{itemize}[label=\textbullet, leftmargin=*]
\item The right-hand side is $1$-homogeneous in each variable
$b_{ij}^{k_e}$. Therefore, we must have $1/\alpha_e+1/\beta_e = 
k_e/|\mathbf{k}|$ for every $e\in E(G)$.
\item There are $m_1$ sums over right vertices and $m_2$ sums over left 
vertices on the right-hand side (counting multiplicities given by the 
exponents). Therefore, we must have $\sum_{e\in E(G)} 
(|\mathbf{k}|/k_e\alpha_e+1/\beta_e) = m_1$ and
$\sum_{e\in E(G)} (|\mathbf{k}|/k_e\beta_e+1/\alpha_e) = m_2$.
\end{itemize}
Now apply H\"older's inequality to each term as we did at the end of the 
proof of Theorem \ref{thm:holder}. This yields, using
$\sum_{i,j}b_{ij}^{|\mathbf{k}|}=\sum_{i,j}b_{ij}^{2p}=1$,
$$
	W^{\mathbf{k}}(G) \le
	\prod_{e\in E(G)}
	\Bigg[
	\sum_i\Bigg(\sum_j b_{ij}^2\Bigg)^{\frac{|\mathbf{k}|}{2}}
	\Bigg]^{\frac{2}{k_e\alpha_e}}
	\Bigg[\sum_j\Bigg(\sum_i b_{ij}^2\Bigg)^{\frac{|\mathbf{k}|}{2}}
	\Bigg]^{\frac{2}{k_e\beta_e}}.
$$
But as $\sigma_{p,1}\ge\sigma_{p,2}$, we can estimate
$$
	W^{\mathbf{k}}(G) 
	=
	\sigma_{p,1}^{2\sum_e \frac{|\mathbf{k}|}{k_e\alpha_e}}
	\sigma_{p,2}^{2\sum_e \frac{|\mathbf{k}|}{k_e\beta_e}}
	\le
	\sigma_{p,1}^{2\sum_e(\frac{|\mathbf{k}|}{k_e\alpha_e}+\frac{1}{\beta_e})}
	\sigma_{p,2}^{2\sum_e(\frac{|\mathbf{k}|}{k_e\beta_e}-\frac{1}{\beta_e})}
	=
	\sigma_{p,1}^{2m_1}
	\sigma_{p,2}^{2(m_2-1)},
$$
which completes the proof.
\end{proof}

\subsection{Non-Gaussian matrices: heavy-tailed entries}
\label{sec:nong}

The main results of this paper show that for a centered Gaussian random 
matrix $X$, the distributions of the random variables $\|X\|_{S_p}$ and 
$\|X\|_{\ell_p(\ell_2)}$ are comparable in a very strong sense. This 
conclusion is surprisingly general---the comparison holds regardless of 
the variance pattern of the matrix entries---and it is far from clear from 
its simple statement why even Gaussianity of the entries should be needed. 
It is tempting to conjecture that this phenomenon should not depend on the 
entry distribution at all, but should follow from a general comparison 
principle for arbitrary random matrices with independent entries. However, 
this is not the case: as is shown by a simple example due to Seginer 
\cite[Theorem 3.2]{Seg00}, even the conclusion 
$\mathbf{E}\|X\|_{S_p}\lesssim \mathbf{E}\|X\|_{\ell_p(\ell_2)}$ is 
manifestly false when the entries of $X$ are uniformly bounded. From this 
perspective, it is not a surprise that the sharp result proves to be a 
Gaussian phenomenon, and that Gaussian analysis should play a key role 
throughout its proof.

Nonetheless, our results are by no means restricted to Gaussian entries: 
once the Gaussian result has been proved, we can extend its conclusion to 
a much larger class of distributions. To fix some ideas, consider for 
example the case where $X$ is a symmetric matrix with 
$X_{ij}=b_{ij}h_{ij}$, where $b_{ij}\ge 0$ and $h_{ij}$ are i.i.d.\ 
symmetric $\alpha$-stable random variables with $\alpha\in(1,2]$ (for
$i\ge j$). We claim 
that the conclusion of Corollary \ref{cor:schattentail} extends verbatim 
to this situation: that is, we have
$$
	\mathbf{P}[\|X\|_{\ell_p(\ell_2)}\ge t] \le
	\mathbf{P}[\|X\|_{S_p}\ge t] \le
	C\,\mathbf{P}[\|X\|_{\ell_p(\ell_2)}\ge t/C]
$$
for all $t\ge 0$ and $2\le p\le\infty$, where $C$ is a universal constant.
To prove this, it suffices to recall \cite[p.\ 176]{Fel71}
that any symmetric $\alpha$-stable random variable can be written as
$h_{ij}=g_{ij}z_{ij}$, where $g_{ij}$ is a standard Gaussian variable and 
$z_{ij}$ is a nonnegative random variable independent of $g_{ij}$. It 
therefore suffices to apply Corollary \ref{cor:schattentail} conditionally 
on $\{z_{ij}\}$ to prove the claim.

The above observations suggest the following general principle: while the 
conclusion of Theorem \ref{thm:schatten} (or Corollary 
\ref{cor:schattentail}) cannot be expected to hold in general for 
light-tailed entries, the result should extend rather generally to entry 
distributions whose tails are heavier than that of the Gaussian 
distribution. We have not proved a completely general formulation of this 
idea, and it is unclear what minimal regularity requirements are needed to 
make it precise. However, the following partial result applies to a broad 
class of heavy-tailed entry distributions, and serves as an illustration 
of how our results may be extended far beyond the Gaussian setting.

\begin{thm}
\label{thm:heavy}
Let $X$ be an $n\times n$ symmetric matrix with 
$X_{ij}=b_{ij}h_{ij}$, where $b_{ij}\ge 0$ and $h_{ij}$, $i\ge j$ are 
independent centered random variables that satisfy 
$$
	C_1p^{\beta} \le \mathbf{E}[|h_{ij}|^p]^{1/p} \le
	C_2p^{\beta}
	\qquad\mbox{for all }p\ge 2
$$ 
for some $\beta\ge \tfrac{1}{2}$.
Then for $2\le p\le\infty$
\begin{align*}
	\mathbf{E}\|X\|_{S_p} &\asymp
	\mathbf{E}\Bigg[
	\Bigg(\sum_i\Bigg(\sum_j X_{ij}^2\Bigg)^{p/2}\Bigg)^{1/p}
	\Bigg] 
	\\
	&\asymp
	\Bigg(\sum_i\Bigg(\sum_j b_{ij}^2\Bigg)^{p/2}\Bigg)^{1/p} +
	\max_{i\le e^p}\max_j b_{ij}^*(\log i)^{\beta} +
	p^\beta\Bigg(
	\sum_{i\ge e^p}\max_j {b_{ij}^{*}}^p
	\Bigg)^{1/p},
\end{align*}
where the constants depend on $C_1,C_2,\beta$ only.
\end{thm}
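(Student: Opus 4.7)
The plan is to retrace the Gaussian proof of Sections \ref{sec:moment}--\ref{sec:norm}, systematically replacing $\sqrt{p}$ and $\sqrt{\log i}$ by $p^\beta$ and $(\log i)^\beta$ throughout. First, the explicit formula in the statement is obtained by computing $\mathbf{E}\|X\|_{\ell_p(\ell_2)}$ as in Corollary \ref{cor:expl}. This requires a non-Gaussian replacement of Lemma \ref{lem:gausslp}, namely
$$
\mathbf{E}\|G\|_{\ell_p} \asymp \max_{i\le e^p}\sigma_i^*(\log i)^\beta + p^\beta\Big(\sum_{i\ge e^p}(\sigma_i^*)^p\Big)^{1/p}
$$
for independent random variables $G_i$ with $\|G_i\|_p \asymp \sigma_i p^\beta$; the argument is line-for-line the same as in the Gaussian case, using that the maximum of $k$ sub-Weibull variables of order $1/\beta$ is of order $(\log k)^\beta$. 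The Gaussian Poincar\'e step bounding the fluctuation of $Z_i := \sqrt{\sum_j X_{ij}^2}$ is replaced by a Rosenthal-type moment bound applied to the sum of centered independent heavy-tailed variables $b_{ij}^2(h_{ij}^2 - \mathbf{E}h_{ij}^2)$, yielding $\|Z_i - \mathbf{E}Z_i\|_p \lesssim p^\beta\max_j b_{ij}$. Together with Lemma \ref{lem:schlp} this yields the lower bound $\mathbf{E}\|X\|_{S_p} \ge \mathbf{E}\|X\|_{\ell_p(\ell_2)}$ of the correct order.

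For the upper bound I first produce the non-Gaussian moment estimate
$$
(\mathbf{E}\|X\|_{S_p}^p)^{1/p} \lesssim \bigg(\sum_i\Big(\sum_j b_{ij}^2\Big)^{p/2}\bigg)^{1/p} + p^\beta\bigg(\sum_{i,j} b_{ij}^p\bigg)^{1/p}
$$
by repeating the proof of Theorem \ref{thm:moment}. After a harmless symmetrization of $h_{ij}$, the combinatorial expansion of $\mathbf{E}\,\mathrm{Tr}[X^{2p}]$ is unchanged, and Proposition \ref{prop:holder} remains valid since it only concerns the coefficients $(b_{ij})$. The factors $\mathbf{E}[g^{2i}]$ are replaced by $\mathbf{E}[h^{2i}]\asymp(2i)^{2i\beta}$, yielding a comparison $\mathbf{E}\,\mathrm{Tr}[X^{2p}]\le\mathbf{E}\|\tilde Y\|^{2p}$ for an $r\times r$ Wigner-type matrix $\tilde Y$ with i.i.d.\ entries distributed as $h_{ij}$ and $r\asymp\sigma_p^2+p$. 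A homogeneous norm bound $\mathbf{E}\|\tilde Y\|^{2p}\lesssim(\sqrt r+Cp^\beta)^{2p}$, obtained by the same moment method on $\tilde Y$ (using $\mathbf{E}\max_{ij}|h_{ij}|\lesssim(\log r)^\beta$), completes this step. Then via Corollary \ref{cor:moment}'s interpolation argument the bound extends to non-integer $p$, and choosing $p\sim\log n$ produces the non-Gaussian analog of Theorem \ref{thm:bvh}: $\mathbf{E}\|X\| \lesssim \max_i\sqrt{\sum_j b_{ij}^2} + \max_{ij} b_{ij}(\log n)^\beta$.

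Next I prove $\mathbf{E}\|X\|\lesssim a+b^{(\beta)}$ with $a:=\max_i\sqrt{\sum_j b_{ij}^2}$ and $b^{(\beta)}:=\max_{ij} b_{ij}^*(\log i)^\beta$, following the block decomposition of Theorem \ref{thm:latala} verbatim but with property (B) rescaled to $\card\{i:\max_j b_{ij}>b^{(\beta)}/(\log k)^\beta\}<k$. The selection algorithm is unchanged, and the analog of Lemma \ref{lem:core}(i) gives $b_{ij}\lesssim b^{(\beta)}2^{-k\beta}$ for $i\ge N_k$, while part (ii) is coefficient-only and untouched. Each diagonal block $U_k$ of dimension $\le N_{2k+2}$ is controlled by the non-Gaussian Theorem \ref{thm:bvh}: $\mathbf{E}\|U_k\|\lesssim a+b^{(\beta)}2^{-k\beta}(\log N_{2k+2})^\beta\lesssim a+b^{(\beta)}$ since $\log N_{2k+2}\asymp 2^{2k+2}$, with summable deviations handled by the Efron--Stein moment inequality $\|\,\|U_k\|-\mathbf{E}\|U_k\|\,\|_p\lesssim p^\beta b^{(\beta)}2^{-k\beta}$. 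The main obstacle is the remainder $W$: the Gaussian proof used the dimension-free bound of Theorem \ref{thm:vh} from \cite{vH17}, for which I do not have a ready non-Gaussian substitute. I plan to adapt the geometric chaining argument of \cite{vH17} to heavy-tailed entries via generic chaining with $\gamma_\alpha$-functionals \emph{\`a la} Talagrand, yielding a bound with $\log i$ replaced by $(\log i)^{c_\beta}$ for some exponent $c_\beta$ depending only on $\beta$. The exact value of $c_\beta$ is immaterial: Lemma \ref{lem:core}(ii) furnishes the doubly-exponential decay $\max_j b_{ij}\mathbf{1}_{(i,j)\in E_3}\lesssim a\,2^{-2^{k-2}}$ for $M_k\le i<M_{k+1}$, which swamps any polynomial-in-$\log i$ factor and forces $\mathbf{E}\|W\|\lesssim a$. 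Finally, the decomposition $X=Y+Z$ from the proof of Theorem \ref{thm:schatten} (with $Z$ supported on indices $\ge e^p$ and $\mathrm{rank}(Y)\le 2e^p$) combines the two bounds: $\|Y\|_{S_p}\lesssim e\|Y\|$ is estimated by the operator norm bound above, and $\|Z\|_{S_p}$ by the moment bound, thereby matching the explicit formula and closing the proof.
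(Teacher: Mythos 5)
Your plan has a genuine gap at precisely the point you flag yourself: the treatment of the off-diagonal remainder $W$ requires a dimension-free bound of the form $\mathbf{E}\|W\|\lesssim a+\max_{ij}b_{ij}^*(\log i)^{c_\beta}$ for heavy-tailed entries, and you do not have one. "Adapting the geometric chaining argument of \cite{vH17} via $\gamma_\alpha$-functionals" is a research program, not a step: the argument behind Theorem \ref{thm:vh} is intrinsically Gaussian (it rests on a comparison of Gaussian processes), and no heavy-tailed analogue is available off the shelf. Since the entire block-decomposition strategy of Theorem \ref{thm:latala} collapses without control of $E_3$, the proof as proposed is incomplete. Several of the other steps are also asserted rather than proved (the Wigner-type bound $\mathbf{E}\|\tilde Y\|^{2p}\lesssim(\sqrt r+Cp^\beta)^{2p}$ for i.i.d.\ heavy-tailed entries, and the fluctuation bound $\|Z_i-\mathbf{E}Z_i\|_p\lesssim p^\beta\max_j b_{ij}$, which is delicate when $p^\beta$ is large relative to the effective dimension of row $i$), though these are in principle fillable.

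The paper avoids all of this by never redoing the Gaussian analysis. The key observation is a convex-domination lemma (Lemma \ref{lem:cfconvex}): if $h$ and $h'$ both satisfy the two-sided moment condition, then the implied two-sided tail bounds (Lemma \ref{lem:paley}) allow a coupling $\delta|h|\le c|h'|$ with a Bernoulli $\delta$, whence $\mathbf{E}f(h_1,\dots,h_k)\le\mathbf{E}f(Ch_1',\dots,Ch_k')$ for all symmetric convex $f$. Choosing $h_{ij}'=g_{ij}|\tilde g_{ij}|^{2\beta-1}$ with independent Gaussians $g,\tilde g$ --- which satisfies the same moment condition --- one replaces $X$ by $(b_{ij}h_{ij}')$ in both $\|\cdot\|_{S_p}$ and $\|\cdot\|_{\ell_p(\ell_2)}$ up to constants, and then applies the already-proved Gaussian Theorem \ref{thm:schatten} \emph{conditionally on} $\{\tilde g_{ij}\}$, since conditionally the matrix is exactly of the form covered there. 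This gives $\mathbf{E}\|X\|_{S_p}\asymp\mathbf{E}\|X\|_{\ell_p(\ell_2)}$ in a few lines; the explicit formula then follows from a second application of the domination lemma with $h_{ij}''=\varepsilon_{ij}|g_{ij}|^{2\beta}$, which converts $\|X\|_{\ell_p(\ell_2)}$ into a Gaussian mixed norm $\|(b_{ij}^{1/2\beta}g_{ij})\|_{\ell_{2\beta p}(\ell_{4\beta})}^{2\beta}$ handled as in Corollary \ref{cor:expl}. I would encourage you to look for this kind of reduction: once the Gaussian theorem is in hand, the heavy-tailed case should be a corollary of it, not a reprise of its proof.
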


\begin{rem}
One may readily verify by inspection of the proof that the conclusions of 
Corollaries \ref{cor:schattentail} and \ref{cor:schattentailasym} 
extend analogously to the setting of Theorem \ref{thm:heavy}.
\end{rem}

The rest of this subsection is devoted to the proof of Theorem 
\ref{thm:heavy}. As a first step, we note that the moment assumption 
implies a tail bound.

\begin{lem}
\label{lem:paley}
Let $h$ be a random variable such that
$$
	C_1p^{\beta} \le \mathbf{E}[|h|^p]^{1/p} \le
	C_2p^{\beta}
	\qquad\mbox{for all }p\ge 2.
$$
Then there exist constants $c_1,c_2$ depending only on
$C_1,C_2,\beta$ such that
$$
	c_1e^{-t^{1/\beta}/c_1} \le
	\mathbf{P}[|h|\ge t] \le c_2e^{-t^{1/\beta}/c_2}
	\qquad\mbox{for all }t\ge 0.
$$
\end{lem}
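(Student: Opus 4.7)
The plan is to establish the standard duality between polynomial moment growth of order $p^{\beta}$ and a stretched-exponential tail of order $\exp(-t^{1/\beta})$. Both directions are routine, so my main task is to carry out the correct optimization in $p$ for each.

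For the upper bound, I would apply Markov's inequality in the form $\mathbf{P}[|h|\ge t] \le t^{-p}\mathbf{E}[|h|^p] \le (C_2 p^{\beta}/t)^p$, valid for every $p\ge 2$. Choosing $p=(t/(eC_2))^{1/\beta}$ to roughly minimize the right-hand side then yields $\mathbf{P}[|h|\ge t]\le \exp(-(t/(eC_2))^{1/\beta})$ whenever this choice of $p$ is at least $2$, i.e., whenever $t$ exceeds some threshold $t_0$ depending on $C_2,\beta$. For $t\le t_0$ the trivial bound $\mathbf{P}[|h|\ge t]\le 1$ is absorbed by adjusting $c_2$, giving the claimed upper tail bound.

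For the lower bound, I would apply the Paley--Zygmund inequality to the nonnegative random variable $|h|^p$, which yields
\[
        \mathbf{P}\!\left[|h|^p \ge \tfrac{1}{2}\mathbf{E}[|h|^p]\right]
        \ge \tfrac{1}{4}\,\frac{(\mathbf{E}[|h|^p])^2}{\mathbf{E}[|h|^{2p}]}
        \ge \tfrac{1}{4}\,\Bigl(\frac{C_1}{2^{\beta}C_2}\Bigr)^{2p},
\]
where I used $\mathbf{E}[|h|^p]\ge (C_1 p^{\beta})^p$ and $\mathbf{E}[|h|^{2p}]\le (C_2(2p)^{\beta})^{2p}$. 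Since the event $\{|h|^p\ge \tfrac{1}{2}\mathbf{E}[|h|^p]\}$ equals $\{|h|\ge t_p\}$ with $t_p:=(\tfrac{1}{2}\mathbf{E}[|h|^p])^{1/p}\ge 2^{-1/p}C_1 p^{\beta}\ge \tfrac{1}{2}C_1 p^{\beta}$, I can, given any target $t$ exceeding a suitable $t_1$ depending on $C_1,\beta$, solve $t_p\ge t$ by taking $p\asymp (t/C_1)^{1/\beta}$. This transforms the Paley--Zygmund lower bound into $\mathbf{P}[|h|\ge t]\ge c_1 e^{-t^{1/\beta}/c_1}$ for some constant $c_1$ depending only on $C_1,C_2,\beta$. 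For $t\le t_1$ one observes that $\mathbf{P}[|h|\ge t]\ge \mathbf{P}[|h|\ge t_1]$, and the latter is a strictly positive constant (say by applying the upper range of the estimate just obtained at $t=t_1$), so the bound holds for all $t\ge 0$ after shrinking $c_1$ if necessary.

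No genuine obstacle arises here: the argument is a textbook equivalence, and the only mild care needed is ensuring the parameter $p$ chosen in each optimization satisfies $p\ge 2$ (so that the hypothesis is applicable) and handling the small-$t$ regime separately by constant adjustments.
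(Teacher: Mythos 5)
Your proposal is correct and follows essentially the same route as the paper: Markov's inequality optimized over $p$ for the upper tail, and the Paley--Zygmund inequality applied to $|h|^p$ (comparing $\|h\|_p$ with $\|h\|_{2p}$) for the lower tail, with the small-$t$ regime absorbed into the constants. The paper merely packages the optimization slightly differently, fixing $p$ and reading off the tail at $t\asymp p^{\beta}$ rather than fixing $t$ and solving for $p$.
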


\begin{proof}
To upper bound the tail, we note that Markov's inequality implies
$$
	\mathbf{P}[|h|\ge C_2ep^\beta] \le
	\mathbf{P}[|h|\ge e\|h\|_p] \le
	e^{-p}
$$
for all $p\ge 2$. Moreover, if we further bound $e^{-p}$ by $e^{2-p}$, 
then the inequality is trivially valid for all $p>0$. The upper tail now 
follows easily.

To lower bound the tail, we note that by the Paley-Zygmund inequality
$$
	\mathbf{P}[|h|\ge \tfrac{1}{2}C_1p^\beta] \ge
	\mathbf{P}[|h|\ge \tfrac{1}{2}\|h\|_p]
	\ge
	(1-2^{-p})^2 \bigg(\frac{\|h\|_p}{\|h\|_{2p}}\bigg)^{2p}
	\ge
	\tfrac{1}{2}
	e^{-cp}
$$
for $p\ge 2$, where $c=2\log(2^\beta C_2/C_1)>0$. Moreover, if we 
lower bound $e^{-cp}$ by $e^{-c(p+2)}$, the inequality is valid 
for all $p>0$.  The lower tail follows easily.
\end{proof}

As a consequence, we obtain the following comparison theorem.

\begin{lem}
\label{lem:cfconvex}
Let $h_i$ and $h_i'$, $i=1,\ldots,k$ be independent centered random 
variables that satisfy the moment assumption of Lemma \ref{lem:paley}
with constants $C_1,C_2,\beta$. Then there exists a constant $C$
depending only on $C_1,C_2,\beta$ such that
$$
	\mathbf{E}[f(h_1,\ldots,h_k)] \le
	\mathbf{E}[f(Ch_1',\ldots,Ch_k')]
$$
for every symmetric convex function $f:\mathbb{R}^k\to\mathbb{R}$.
\end{lem}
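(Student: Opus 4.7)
The plan is to reduce the $k$-dimensional symmetric convex comparison to a one-dimensional even-convex comparison---via symmetrization followed by tensorization---and then to establish the latter from the tail estimates of Lemma \ref{lem:paley}. For the symmetrization, let $\tilde h_i$ and $\tilde h_i'$ be independent copies of $h_i$ and $h_i'$. Since each $\tilde h_i$ is centered, Jensen's inequality applied conditionally on $(h_1,\ldots,h_k)$ combined with convexity of $f$ gives
$$
\mathbf{E}f(h_1,\ldots,h_k) \le \mathbf{E}f(h_1-\tilde h_1,\ldots,h_k-\tilde h_k),
$$
while convexity together with the sign-flip symmetry of $f$ yields
$$
\mathbf{E}f\bigl(\tfrac{C}{2}(h_1'-\tilde h_1'),\ldots,\tfrac{C}{2}(h_k'-\tilde h_k')\bigr) \le \mathbf{E}f(Ch_1',\ldots,Ch_k').
$$
It therefore suffices to compare the independent symmetric variables $X_i:=h_i-\tilde h_i$ and $Y_i:=\tfrac{C}{2}(h_i'-\tilde h_i')$, both of which inherit two-sided moment bounds of the form $\|X_i\|_p\asymp p^\beta$ and $\|Y_i\|_p\asymp C p^\beta$, using the standard fact $\|Z-\tilde Z\|_p\ge\|Z\|_p$ for independent centered copies.

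Next I would tensorize. A convex function on $\mathbb{R}^k$ invariant under coordinatewise sign flips becomes, upon fixing all but one coordinate, an even convex function of the remaining coordinate. Hence, if I can prove the one-dimensional comparison $\mathbf{E}g(X_i)\le \mathbf{E}g(Y_i)$ for every even convex $g:\mathbb{R}\to\mathbb{R}$ and each $i$, then iteratively conditioning and replacing $X_i$ by $Y_i$ one coordinate at a time---using independence across $i$---recovers the full $k$-dimensional inequality. For symmetric random variables, this one-dimensional comparison is classically equivalent to the integrated tail domination
$$
\int_a^\infty \mathbf{P}[|X_i|>t]\,dt \le \int_a^\infty \mathbf{P}[|Y_i|>t]\,dt \qquad\text{for every } a\ge 0.
$$

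Applying Lemma \ref{lem:paley} to the symmetrized variables produces exponential tail bounds $\mathbf{P}[|X_i|>t]\le c_2\, e^{-t^{1/\beta}/c_2}$ and $\mathbf{P}[|Y_i|>t]\ge c_1\, e^{-t^{1/\beta}/(c_1 C^{1/\beta})}$. For $C$ large enough (depending only on $C_1,C_2,\beta$), the pointwise tail domination $\mathbf{P}[|X_i|>t]\le \mathbf{P}[|Y_i|>t]$ then holds for all $t\ge t_0$, which gives the integrated comparison for $a\ge t_0$. The main obstacle is the range $a\in[0,t_0]$, where pointwise tail domination can genuinely fail---for instance, if $h_i'$ has an atom at $0$ but $h_i$ does not, then $\mathbf{P}[X_i\ne 0]>\mathbf{P}[Y_i\ne 0]$. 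To close this gap, I would rewrite the integrated tail difference as
$$
\bigl(\mathbf{E}|Y_i|-\mathbf{E}|X_i|\bigr) + \int_0^a\bigl(\mathbf{P}[|X_i|>t]-\mathbf{P}[|Y_i|>t]\bigr)\,dt,
$$
whose second summand is trivially at least $-a\ge -t_0$. Since $\mathbf{E}|Y_i|$ grows linearly in $C$ (a positive lower bound on $\mathbf{E}|h_i'-\tilde h_i'|$ follows from Paley--Zygmund applied to the two-sided moment bounds), while $\mathbf{E}|X_i|$ and $t_0$ depend only on $C_1,C_2,\beta$, further enlarging $C$ ensures $\mathbf{E}|Y_i|-\mathbf{E}|X_i|\ge t_0$ and closes the argument.
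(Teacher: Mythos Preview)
Your argument is correct and takes a different route from the paper. After the same symmetrization step, the paper uses a Bernoulli-thinning trick: from Lemma~\ref{lem:paley} one gets $\mathbf{P}[|h_i|\ge t]\le c\,\mathbf{P}[c|h_i'|\ge t]$, so introducing independent $\delta_i\sim\mathrm{Bern}(1/c)$ makes $\delta_i|h_i|$ \emph{pointwise} stochastically dominated by $c|h_i'|$; a coupling then gives $\delta_i|h_i|\le c|h_i'|$ a.s., and a Rademacher contraction argument together with Jensen on the $\delta_i$ finishes all $k$ coordinates at once. You instead tensorize down to a one-dimensional increasing-convex-order comparison and absorb the failure of pointwise tail domination near zero by enlarging $C$ to win on the mean. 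The paper's thinning device is slicker and yields a smaller constant; your route avoids coupling and the contraction principle at the price of a more delicate tail computation and a two-stage choice of $C$.

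One clarification is in order. You phrase the tensorization for $f$ invariant under \emph{coordinatewise} sign flips, but the lemma (and its application to Schatten norms, which are not invariant under flipping a single matrix entry) only assumes the global symmetry $f(-x)=f(x)$. Your tensorization still goes through, because after symmetrization every $X_j,Y_j$ is symmetric: the one-variable section $x\mapsto\mathbf{E}f(Z_1,\ldots,Z_{i-1},x,Z_{i+1},\ldots,Z_k)$ is even since $f(Z_1,\ldots,-x,\ldots,Z_k)=f(-Z_1,\ldots,x,\ldots,-Z_k)$ by global symmetry and $(-Z_j)_{j\ne i}\stackrel{d}{=}(Z_j)_{j\ne i}$. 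Equivalently, one may first average $f$ over independent Rademacher signs, which is harmless for symmetric variables and produces a function that is genuinely even in each coordinate.
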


\begin{proof}
We begin by noting that as $h_i$ are centered, Jensen's inequality yields
$$
	\mathbf{E}[f(h_1,\ldots,h_k)] \le
	\mathbf{E}[f(h_1-\tilde h_1,\ldots,h_k-\tilde h_k)] \le
	\mathbf{E}[f(2h_1,\ldots,2h_k)]
$$
whenever $f$ is symmetric and convex, where $\tilde h_i$ are independent 
copies of $h_i$. Moreover, the random variables $h_i-\tilde h_i$
clearly satisfy the same moment assumptions as $h_i$ modulo
a universal constant. We can therefore assume without loss of generality
in the sequel that the random variables $h_i$ and $h_i'$ are 
symmetrically distributed.

To proceed, note that Lemma \ref{lem:paley} implies that 
$$
	\mathbf{P}[|h_{i}|\ge t] \le 
        c\,\mathbf{P}[c|h_{i}'|\ge t]
$$
for all $t\ge 0$ and $i$, where $c\ge 1$ is a constant that depends only 
on $C_1,C_2,\beta$. In particular, let $\delta_i\sim\mathrm{Bern}(1/c)$ be 
i.i.d.\ Bernoulli variables independent of $h$. Then
$$
	\mathbf{P}[\delta_i|h_{i}|\ge t] 
	\le
	\mathbf{P}[c|h_{i}'|\ge t]
$$
for all $t\ge 0$. By a standard coupling argument, we can couple
$(\delta_i,h_i)$ and $(h_i')$ on the same probability space
such that $\delta_i|h_i| \le c|h_i'|$ a.s. for every $i$
\cite[p.\ 127]{Lin02}.

Now let $\varepsilon_i$ be i.i.d.\ Rademacher variables. Then
$$
	\mathbf{E}[\,f(\varepsilon_1\delta_1|h_1|,
	\ldots,\varepsilon_k\delta_k|h_k|)\,|\delta,h,h'] \le
	\mathbf{E}[\,f(c\varepsilon_1|h_1'|,\ldots,
	c\varepsilon_k|h_k'|)\,|\delta,h,h']
$$
follows by convexity (it suffices to note that
$\alpha\mapsto 
\mathbf{E}[f(\alpha_1\varepsilon_1,\ldots,\alpha_k\varepsilon_k)]$ is 
convex, so its supremum over $\prod_i[-c|h_i'|,c|h_i'|]$ is attained at
one of the extreme points).
Therefore, as $h_i,h_i'$ are symetrically distributed and by
Jensen's inequality,
$$
	\mathbf{E}[f(h_1/c,\ldots,h_k/c)] \le
	\mathbf{E}[f(\delta_1 h_1,\ldots,\delta_k h_k)] \le
	\mathbf{E}[f(ch_1',\ldots,ch_k')]
$$
for every symmetric convex function $f$. This concludes the proof.
\end{proof}

We can now complete the proof of Theorem \ref{thm:heavy}.

\begin{proof}[Proof of Theorem \ref{thm:heavy}]
Let $g_{ij}$ and $\tilde g_{ij}$, $i\ge j$ be i.i.d.\ standard Gaussian 
variables, and define $h_{ij}' := g_{ij}|\tilde g_{ij}|^{2\beta-1}$.
Then it is readily verified that $h_{ij}'$ satisfies the same moment 
condition as $h_{ij}$ for each $i,j$. Therefore, by Lemma 
\ref{lem:cfconvex}, we have
$$
	\mathbf{E}\|X\|_{S_p}\asymp
	\mathbf{E}\|(b_{ij}h_{ij}')\|_{S_p},
	\qquad
	\mathbf{E}\|X\|_{\ell_p(\ell_2)}\asymp
	\mathbf{E}\|(b_{ij}h_{ij}')\|_{\ell_p(\ell_2)},	
$$
where the constants depend only on $C_1,C_2,\beta$. By applying Theorem 
\ref{thm:schatten} conditionally on $\{\tilde g_{ij}\}$, it now follows 
immediately that $\mathbf{E}\|X\|_{S_p}\asymp\mathbf{E}\|X\|_{\ell_p(\ell_2)}$.

It remains to obtain the explicit formula. To this end, let 
$\varepsilon_{ij}$, $i\ge j$ be i.i.d.\ Rademacher variables and let
$h_{ij}'' := \varepsilon_{ij}|g_{ij}|^{2\beta}$. Applying again Lemma
\ref{lem:cfconvex} yields
$$
	\mathbf{E}\|X\|_{\ell_p(\ell_2)} \asymp
	\mathbf{E}\|(b_{ij}h_{ij}'')\|_{\ell_p(\ell_2)} =
	\mathbf{E}\|(b_{ij}^{1/2\beta}g_{ij})\|_{\ell_{2\beta p}(\ell_{4\beta})}^{2\beta}.
$$
A routine application of Gaussian concentration
\cite[Theorem 5.8]{BLM13} shows that
$$
	\mathbf{E}\big[
	\|(b_{ij}^{1/2\beta}g_{ij})\|_{\ell_{2\beta p}(\ell_{4\beta})}^{2\beta}
	\big]^{1/2\beta}
	\asymp
	\mathbf{E}\|(b_{ij}^{1/2\beta}g_{ij})\|_{\ell_{2\beta 
	p}(\ell_{4\beta})},
$$
where the constant depends only on $\beta$.
The proof is concluded by a straightforward adaptation of the proof of
Corollary \ref{cor:expl}; we omit the details.
\end{proof}

\subsection{Non-Gaussian matrices: bounded entries}
\label{sec:bdd}

As was explained in the previous section, the two-sided bounds of Theorem 
\ref{thm:schatten} fail to extend to the situation where the entries of 
the matrix are light-tailed; in such cases new phenomena arise that are 
poorly understood, and the problem of obtaining two-sided bounds for 
matrices with bounded entries remains open (see \cite[section 4.2]{BvH16} 
for some discussion along these lines). Nonetheless, Gaussian results are 
still of considerable interest in this setting as they yield very good 
upper bounds on the matrix norms in many cases of practical interest. For 
example, the methods of the previous subsection may be easily adapted to 
show that if $X$ is a symmetric random matrix whose entries $X_{ij}$ are 
independent, centered, and $b_{ij}$-subgaussian, then its expected 
Schatten norms are dominated by those of the Gaussian matrix defined in 
Theorem \ref{thm:schatten}. Thus the explicit expression given in Theorem 
\ref{thm:schatten}, while not always sharp in the subgaussian case, always 
yields an upper bound on the quantities of interest.

Of particular interest in this context are the bounds of Theorems 
\ref{thm:moment} and \ref{thm:momentrect}, which not only yield very 
explicit upper bounds on the moments of Gaussian random matrices, but even 
provide sharp constants in the leading terms. The aim of this section is 
to show that for random matrices with uniformly bounded entries, these 
moment bounds admit an important refinement. While this requires only a 
minor modification of the proofs of Theorems \ref{thm:moment} and 
\ref{thm:momentrect}, we will spell out these results in some detail as 
they prove to be of considerable utility in many applications (for 
example, in the study of random graphs and in applied mathematics).

The main result of this section is the following slight refinement of
Theorem \ref{thm:bmain}.

\begin{thm}
\label{thm:bmoment}
Let $X$ be an $n\times n$ symmetric matrix with independent 
centered entries for $i\ge j$, and define the quantities
$$
	\sigma_p :=
	\Bigg(\sum_i\Bigg(\sum_j 
	\mathbf{E}[X_{ij}^2]\Bigg)^p\Bigg)^{1/2p},
	\qquad
	\sigma_p^* :=
	\Bigg(\sum_{i,j}
	\|X_{ij}\|_\infty^{2p}\Bigg)^{1/2p}.
$$
Then we have for every $p\in\mathbb{N}$
$$
	(\mathbf{E}\|X\|_{S_{2p}}^{2p})^{1/2p} \le
	2\sigma_p + C\sqrt{p}\,\sigma_p^*,
$$
where $C$ is a universal constant. Moreover, we have
$$
	\mathbf{P}[\|X\|_{S_{2p}}\ge 2\sigma_p+C\sqrt{p}\,\sigma_p^*+t] \le
	e^{-t^2/C\max_{ij}\|X_{ij}\|_\infty^2}
$$
for all $p\in\mathbb{N}$ and $t\ge 0$.
\end{thm}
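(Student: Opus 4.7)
The plan is to adapt the combinatorial proof of Theorem~\ref{thm:moment} to the present non-Gaussian setting, using the elementary inequality $|\mathbf{E}[X_{ij}^k]|\le \mathbf{E}[X_{ij}^2]\,\|X_{ij}\|_\infty^{k-2}$, valid for every centered bounded random variable and $k\ge 2$, in place of the Gaussian moment computation. After reducing to the case of vanishing diagonal by an analogue of Lemma~\ref{lem:nodiag} (using $\mathbf{E}[X_{ii}^{2p}]\le \|X_{ii}\|_\infty^{2p}$, so that the diagonal contributes at most $\sqrt{2p}\,\sigma_p^*$), I would expand $\mathbf{E}[\mathrm{Tr}[X^{2p}]]$ as a sum over cycles and observe that, by centering and independence, only cycles in which every distinct edge is traversed at least twice contribute. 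Grouping by shapes and applying the moment bound yields
\[
\mathbf{E}[\mathrm{Tr}[X^{2p}]] \le \sum_{\mathbf{s}} \sum_{\mathbf{u}:\mathbf{s}(\mathbf{u})=\mathbf{s}}\prod_{e\in E(\mathbf{s})}\mathbf{E}[X_{u(e)}^2]\,\|X_{u(e)}\|_\infty^{k_e(\mathbf{s})-2}.
\]

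The central technical step is to prove a two-weight variant of Proposition~\ref{prop:holder}: for every admissible shape,
\[
\sum_{\mathbf{u}:\mathbf{s}(\mathbf{u})=\mathbf{s}}\prod_e \mathbf{E}[X_{u(e)}^2]\,\|X_{u(e)}\|_\infty^{k_e-2}\le \sigma_p^{2(m(\mathbf{s})-1)}\,(\sigma_p^*)^{2p-2(m(\mathbf{s})-1)}.
\]
The reduction to trees of Lemma~\ref{lem:tree} applies verbatim to the edge-dependent weights $b^{(e)}_{ij}:=\mathbf{E}[X_{ij}^2]\|X_{ij}\|_\infty^{k_e-2}$, since its proof only uses Hölder and the graph structure. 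For a tree, Lemma~\ref{lem:bizarrobl} applied with exponents $p_e=2p/k_e$ (which satisfy $\sum_e 1/p_e=\sum_e k_e/(2p)=1$) reduces the task to the per-edge estimate
\[
\Bigg[\sum_i\Bigg(\sum_j\mathbf{E}[X_{ij}^2]\,\|X_{ij}\|_\infty^{k_e-2}\Bigg)^{2p/k_e}\Bigg]^{k_e/(2p)}\le \sigma_p^{2}\,(\sigma_p^*)^{k_e-2}.
\]
This I would prove by a two-stage Hölder split: first, apply Hölder on the inner $j$-sum with exponents $2p/(2p-k_e+2)$ and $2p/(k_e-2)$ to separate the variance factor from the $L^\infty$ factor, yielding $\sum_j\mathbf{E}[X_{ij}^2]\|X_{ij}\|_\infty^{k_e-2}\le (\sum_j a_{ij}^{4p/(2p-k_e+2)})^{(2p-k_e+2)/(2p)}(\sum_j\|X_{ij}\|_\infty^{2p})^{(k_e-2)/(2p)}$, where $a_{ij}^2:=\mathbf{E}[X_{ij}^2]$; then, after raising to the power $2p/k_e$ and summing over $i$, apply Hölder on the outer $i$-sum with exponents $k_e/2$ and $k_e/(k_e-2)$; finally, the power-mean inequality $(\sum_j a_{ij}^t)^{1/t}\le (\sum_j a_{ij}^2)^{1/2}$ (valid for $t=2p/(2p-k_e+2)\cdot 2\ge 2$) collapses the variance piece into $\sigma_p^2$, while the $L^\infty$ piece collapses into $(\sigma_p^*)^{k_e-2}$. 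With this estimate in hand, the dimension-compression argument of Theorem~\ref{thm:moment} carries over unchanged: $\mathbf{E}[\mathrm{Tr}[X^{2p}]]\le\mathbf{E}[\|Y\|^{2p}]$ for an $r$-dimensional Wigner matrix $Y$ with $r\le\lfloor(\sigma_p/\sigma_p^*)^2\rfloor+p+1$, and the standard bound $\mathbf{E}[\|Y\|^{2p}]^{1/2p}\le 2\sqrt{r}+2\sqrt{2p}$ yields the claimed moment estimate.

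For the concentration bound, the map $x=(x_{ij})_{i\ge j}\mapsto\|X(x)\|_{S_{2p}}$ is convex on $\mathbb{R}^{n(n+1)/2}$ and $\sqrt{2}$-Lipschitz with respect to the Euclidean norm on the entries, since $\|X\|_{S_{2p}}\le\|X\|_F$ for $2p\ge 2$ and the Frobenius norm incurs a factor of $\sqrt{2}$ when passing from the strict upper-triangular parameterization. As the entries are independent and bounded by $B:=\max_{ij}\|X_{ij}\|_\infty$, Talagrand's convex Lipschitz concentration inequality for independent bounded random variables \cite[Theorem~6.10]{BLM13}, together with the standard passage from concentration about the median to concentration about the mean (which costs an additive $O(B)$ absorbed into the exponent), yields $\mathbf{P}[\|X\|_{S_{2p}}\ge\mathbf{E}\|X\|_{S_{2p}}+t]\le e^{-t^2/(CB^2)}$. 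Since the moment bound controls $\mathbf{E}\|X\|_{S_{2p}}\le(\mathbf{E}\|X\|_{S_{2p}}^{2p})^{1/2p}\le 2\sigma_p+C\sqrt{p}\,\sigma_p^*$, the claimed tail estimate follows upon adjusting the universal constant.

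The main obstacle I expect is the derivation of the two-weight per-edge Hölder estimate. In the Gaussian case, the final step of the proof of Theorem~\ref{thm:holder} bounds $\sum_j b_{ij}^k$ by $(\sum_j b_{ij}^2)\max_j b_{ij}^{k-2}$, producing the max-based quantity $\sum_i\max_j\|X_{ij}\|_\infty^{2p}$; in the present setting, to produce the stronger $\sum_{ij}\|X_{ij}\|_\infty^{2p}$ without losing the sharp leading constant $2$ on $\sigma_p$, variance factors and $L^\infty$ factors must be separated by Hölder at the \emph{inner} $j$-sum rather than the outer $i$-sum, with the two sets of exponents then chosen compatibly. The correct Hölder choices are forced by homogeneity and by the requirement that the variance weights aggregate according to the $\ell_p\le\ell_1$ inequality on rows.
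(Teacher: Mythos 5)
Your combinatorial reduction is sound and, in its details, a legitimate variant of the paper's: the paper first bounds each cycle contribution by $\prod_e\mathbf{E}[|X_{v(e)}|^{k_e}]$, runs the tree reduction and Lemma~\ref{lem:bizarrobl} on these moments, and only then applies $\mathbf{E}[|X_{ij}|^k]\le\mathbf{E}[X_{ij}^2]\|X_{ij}\|_\infty^{k-2}$ followed by pulling out $\max_j\|X_{ij}\|_\infty$ and a single H\"older over $i$; you instead substitute the two-weight product first and split by H\"older on the inner $j$-sum. Both routes give $\sigma_p^{2(m-1)}(\sigma_p^*)^{2p-2(m-1)}$ per shape (note only that your tree reduction is not quite ``verbatim'': merging parallel edges requires $\mathbf{E}[X^2]\|X\|_\infty^{k(i)-2}\le(\mathbf{E}[X^2]\|X\|_\infty^{k_m'-2})^{k(i)/k_m'}$, the analogue of the Jensen step the paper flags, which does hold since $\mathbf{E}[X^2]\le\|X\|_\infty^2$ and $k(i)\le k_m'$). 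The concentration argument via Talagrand's convex-Lipschitz inequality is also exactly what the paper does.

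The genuine gap is in the assertion that ``the dimension-compression argument of Theorem~\ref{thm:moment} carries over unchanged'' to a Wigner matrix $Y$ with the standard Gaussian bound $\mathbf{E}[\|Y\|^{2p}]^{1/2p}\le 2\sqrt{r}+2\sqrt{2p}$. You correctly observed that, since the entries are only centered (not symmetric), the contributing cycles are merely \emph{admissible} -- every distinct edge traversed at least twice -- and need not be \emph{even}. But for a Gaussian (or any symmetrically distributed) Wigner matrix $Y$, every shape containing an edge of odd multiplicity contributes \emph{zero} to $\mathbf{E}[\mathrm{Tr}[Y^{2p}]]$, because the corresponding odd moment of $Y_{ij}$ vanishes. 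Hence the key comparison
$$
\sum_{\mathbf{s}\in\tilde{\mathcal{S}}_{2p}}\sigma_p^{2(m(\mathbf{s})-1)}\;\le\;\frac{1}{r}\,\mathbf{E}[\mathrm{Tr}[Y^{2p}]]
$$
fails: the left-hand side ranges over all admissible shapes, while the right-hand side only sees the even ones. The paper's fix is to take $Y_{ij}=(\delta_{ij}-\mathbf{E}\delta_{ij})/\mathrm{Var}(\delta_{ij})^{1/2}$ with $\delta_{ij}\sim\mathrm{Bern}(1/4)$, a skewed distribution for which $\mathbf{E}[Y_{ij}^k]\ge 1$ for \emph{every} integer $k\ge 2$ (odd $k$ included), so that every admissible shape is dominated; one then needs the P\'ech\'e--Soshnikov bound $\mathbf{E}\|Y\|\le 2\sqrt{r}+C'$ for Wigner matrices with non-symmetrically distributed entries, upgraded to $\mathbf{E}[\|Y\|^{2p}]^{1/2p}\le 2\sqrt{r}+C''\sqrt{p}$ by Talagrand, in place of the Gaussian Lemma 2.2 of \cite{BvH16}. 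Without some such device your argument only covers symmetrically distributed entries (where a symmetrization step would cost a factor $\sqrt{2}$ on the leading term, defeating the point of the sharp constant $2$).
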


A non-symmetric analogue will follow in precisely the same manner.

\begin{thm}
\label{thm:bmomentrect}
Let $X$ be an $n\times m$ matrix with independent 
centered entries. Define
$$
	\sigma_{p,1} :=
	\Bigg(\sum_i\Bigg(\sum_j 
	\mathbf{E}[X_{ij}^2]\Bigg)^p\Bigg)^{1/2p},
	\qquad
	\sigma_{p,2} :=
	\Bigg(\sum_j\Bigg(\sum_i 
	\mathbf{E}[X_{ij}^2]\Bigg)^p\Bigg)^{1/2p},
$$
and let $\sigma_p^*$ be defined as in Theorem \ref{thm:bmoment}.
Then we have for every $p\in\mathbb{N}$
$$
	(\mathbf{E}\|X\|_{S_{2p}}^{2p})^{1/2p} \le
	\sigma_{p,1} + \sigma_{p,2} + 
	C\sqrt{p}\,\sigma_p^*,
$$
where $C$ is a universal constant. Moreover, we have
$$
	\mathbf{P}[\|X\|_{S_{2p}}\ge 
	\sigma_{p,1} + \sigma_{p,2} +
        C\sqrt{p}\,\sigma_p^*+t]
	\le
	e^{-t^2/C\max_{ij}\|X_{ij}\|_\infty^2}
$$
for all $p\in\mathbb{N}$ and $t\ge 0$.
\end{thm}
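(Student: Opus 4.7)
The plan is to follow the proof of Theorem \ref{thm:momentrect} directly, applying precisely the modifications that pass from Theorem \ref{thm:moment} to Theorem \ref{thm:bmoment}. I would argue directly on the $n\times m$ matrix rather than by symmetrization: applying Theorem \ref{thm:bmoment} to $\tilde X := \left(\begin{smallmatrix} 0 & X\\ X^* & 0\end{smallmatrix}\right)$ does yield a bound of the correct shape, but loses a factor of two in the leading coefficient and so cannot produce the sharp leading constant $1$ advertised in the statement.

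The first step is the standard trace expansion $\mathbf{E}\|X\|_{S_{2p}}^{2p} = \mathbf{E}[\mathrm{Tr}[(XX^*)^p]]$ grouped by bipartite cycle shapes $\mathbf{s}\in\mathcal{S}_{2p}^{\rm bi}$, exactly as in the proof of Theorem \ref{thm:momentrect}. By independence and centering, only shapes with $k_e\ge 2$ on every edge contribute---though now, unlike in the Gaussian case, odd multiplicities are permitted. For each such shape the moment of each edge is controlled by the elementary Hoeffding-type inequality
\[
|\mathbf{E}[X_{ij}^{k_e}]| \le \|X_{ij}\|_\infty^{k_e-2}\,\mathbf{E}[X_{ij}^2],
\]
and the sum over labellings is bounded by a mixed-weight analogue of Proposition \ref{prop:holderrect}: after normalising $\sigma_p^*=1$,
\[
\sum_{(\mathbf{u},\mathbf{v}):\mathbf{s}(\mathbf{u},\mathbf{v})=\mathbf{s}}\prod_{e}\|X_{u(e)v(e)}\|_\infty^{k_e-2}\,\mathbf{E}[X_{u(e)v(e)}^2]
\le \sigma_{p,1}^{2m_1(\mathbf{s})}\sigma_{p,2}^{2(m_2(\mathbf{s})-1)}.
\]
This is proved by mimicking section \ref{sec:holder}: reduce to bipartite trees as in Lemma \ref{lem:tree}, iteratively prune leaves by H\"older's inequality as in Lemma \ref{lem:bizarrobl}, and conclude by the homogeneity argument at the end of Theorem \ref{thm:holder}, now keeping track of an $L^\infty$-weight of degree $k_e-2$ and an $L^2$-weight of degree $2$ on each edge. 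Summing over all shapes and repeating the Wigner-comparison step at the end of the proof of Theorem \ref{thm:momentrect} gives $\mathbf{E}[\mathrm{Tr}[(XX^*)^p]]\le\mathbf{E}[\|Y\|^{2p}]$ for a rectangular Gaussian matrix $Y$ of size $\lceil\sigma_{p,2}^2+p/2\rceil\times\lceil\sigma_{p,1}^2+p/2\rceil$, and the classical Gaussian norm estimate $\mathbf{E}[\|Y\|^{2p}]^{1/2p}\le\sqrt{r}+\sqrt{r'}+2\sqrt{p}$ closes out the moment inequality.

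For the tail bound, observe that $X\mapsto\|X\|_{S_{2p}}$ is convex and $1$-Lipschitz in the Frobenius norm of the matrix entries. After the rescaling $Y_{ij}:=X_{ij}/\|X_{ij}\|_\infty\in[-1,1]$ this becomes a convex function on the product of intervals which is Lipschitz with constant $\max_{ij}\|X_{ij}\|_\infty$, so Talagrand's convex concentration inequality gives Gaussian-type concentration of $\|X\|_{S_{2p}}$ around its median $M$ with exponent $-t^2/C\max_{ij}\|X_{ij}\|_\infty^2$. Comparing the median to the mean by standard arguments, and bounding the mean by the moment estimate just obtained, yields the stated tail bound.

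The main technical point is the mixed-weight H\"older inequality above. The pruning strategy of Lemma \ref{lem:bizarrobl} carries over essentially unchanged, but one must track \emph{two} exponents per edge---the $L^\infty$-weight $k_e-2$ and the $L^2$-weight $2$---through the induction, and verify that the homogeneity constraints at each step still pin down the correct final exponents so as to force the sharp coefficient $1$ in front of each of $\sigma_{p,1}$ and $\sigma_{p,2}$.
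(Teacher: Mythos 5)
Your overall strategy is the right one --- it is exactly the paper's intended route (the proof of Theorem \ref{thm:momentrect} with the same modifications that turn Theorem \ref{thm:moment} into Theorem \ref{thm:bmoment}), and your points about avoiding symmetrization to keep the sharp leading constants, the edge-moment bound $|\mathbf{E}[X_{ij}^{k_e}]|\le\|X_{ij}\|_\infty^{k_e-2}\mathbf{E}[X_{ij}^2]$, the tree reduction and pruning, and the Talagrand step for the tail are all sound. (Your ``mixed-weight'' bookkeeping is a cosmetic variant of the paper's, which keeps $\mathbf{E}[|X_{ij}|^{k_e}]$ as the single edge weight and splits it into the $L^2$ and $L^\infty$ factors only at the very end by H\"older; both work.)

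There is, however, one genuine gap at the comparison step. You correctly observe early on that for general centered bounded entries the admissible bicycle shapes need not be even --- edges may be traversed an odd number of times $k_e\ge 3$ --- but you then compare against a \emph{Gaussian} rectangular matrix $Y$ and invoke the classical Gaussian norm estimate. This does not work: the trace expansion of a Gaussian $Y$ only produces the \emph{even} shapes (odd Gaussian moments vanish), so the sum $\sum_{\mathbf{s}}\sigma_{p,1}^{2m_1(\mathbf{s})}\sigma_{p,2}^{2(m_2(\mathbf{s})-1)}$ over the strictly larger set of admissible shapes cannot be dominated termwise by $\frac{1}{r}\mathbf{E}[\mathrm{Tr}[(YY^*)^p]]$ for Gaussian $Y$. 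This is precisely the ``minimal modification'' that the proof of Theorem \ref{thm:bmoment} makes: the comparison matrix must be chosen with i.i.d.\ entries $Y_{ij}=(\delta_{ij}-\mathbf{E}[\delta_{ij}])/\mathrm{Var}(\delta_{ij})^{1/2}$, $\delta_{ij}\sim\mathrm{Bern}(\tfrac14)$, so that $\mathbf{E}[Y_{ij}^{k}]\ge 1$ for \emph{every} integer $k\ge 2$, which makes every admissible shape contribute at least $r!/(r-m(\mathbf{s}))!$ to $\mathbf{E}[\mathrm{Tr}[(YY^*)^p]]$. One then cannot use the classical Gaussian bound $\sqrt{r}+\sqrt{r'}+2\sqrt{p}$; instead one needs the norm estimate for rectangular matrices with non-symmetrically distributed i.i.d.\ entries (as in \cite{PS07}) to get $\mathbf{E}\|Y\|\le\sqrt{r}+\sqrt{r'}+C'$, followed by Talagrand's concentration inequality to upgrade this to a bound on $\mathbf{E}[\|Y\|^{2p}]^{1/2p}$. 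With that substitution your argument closes correctly and yields the stated constants.
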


The key point in these results is that the leading term only depends on 
the variances of the matrix entries, while the second term depends on 
their uniform bound. The variance-sensitive nature of the bound is crucial 
in many applications. For example, we briefly describe an application to 
random graphs.

\begin{example}
\label{ex:erdos}
Let $A$ be the adjacency matrix of a nonhomogeneous Erd\H{o}s-R\'enyi 
random graph on $n$ vertices, where each edge $\{i,j\}$ is included 
independently with probability $p_{ij}$. Applying Theorem 
\ref{thm:bmoment} with $p=\lceil \alpha\log n\rceil$ yields
$$
	\mathbf{E}\|A-\mathbf{E}A\| \le
	(\mathbf{E}\|A-\mathbf{E}A\|_{S_{2p}}^{2p})^{1/2p} 
	\le
	2e^{1/2\alpha}\sqrt{d}+Ce^{1/\alpha}\sqrt{\alpha\log n}
$$
for any $\alpha\ge 1$, where we defined
$d:=\max_i\sum_j p_{ij}$. It follows immediately that
$$
	\frac{\mathbf{E}\|A-\mathbf{E}A\|}{\sqrt{d}} \le
	2(1+o(1))
	\qquad\mbox{when }n\to\infty,~\log n=o(d).
$$
We therefore easily recover a recent result of \cite{BBK17} that was 
obtained there by a more complicated method. For the purpose of this 
application, it is important to note that both the constant $2$ and the 
condition $d\gtrsim\log n$ are in fact optimal, at least in the 
homogeneous case; see \cite{BBK17} and the references therein. Our general 
bounds therefore yield surprisingly accurate results in this example.
\end{example}

\begin{rem}
If the entries $X_{ij}$ are symmetrically distributed, one can deduce the 
results of Theorems \ref{thm:bmoment} and \ref{thm:bmomentrect} directly 
from Theorems \ref{thm:moment} and \ref{thm:momentrect} by a simple 
symmetrization argument, cf.\ \cite[Corollary 
3.6]{BvH16}. However, if we only assume that $X_{ij}$ are centered, the
symmetrization method loses an additional factor $\sqrt{2}$, while the 
sharp constant for non-symmetrically distributed entries was essential
in the application of Example \ref{ex:erdos}. The main 
observation of this section, which is implicitly contained but not stated 
in \cite{BvH16}, is that a minor modification of the proof of the 
moment bounds makes it possible to obtain the optimal constant even when 
the entries are only assumed to be  
centered. For the purpose of Example \ref{ex:erdos}, it would suffice 
to apply this idea in the simpler setting developed in \cite{BvH16}. 
\end{rem}

\begin{rem}
As was already used implicitly in Example \ref{ex:erdos}, Theorems
\ref{thm:bmoment} and \ref{thm:bmomentrect} provide rather practical 
bounds on the operator norm of $X$ by choosing $p\sim \log n$.
For example, applying Theorem \ref{thm:bmoment} with
$p=\lceil \alpha\log n\rceil$ and using $e^{1/\alpha}\le 1+2/\alpha$
for $\alpha\ge 1$, we can deduce as in the proof of 
\cite[Corollary 3.12]{BvH16} that
$$
	\mathbf{P}\Bigg[\|X\|\ge 2(1+\varepsilon)
	\max_i\sqrt{\sum_j \mathbf{E}[X_{ij}^2]} + t
	\Bigg]
	\le
	n e^{-\varepsilon t^2/C\max_{ij}\|X_{ij}\|_\infty^2}
$$
for every $t\ge 0$ and $0\le \varepsilon\le 1$ ($C$ is a universal 
constant).
This result improves on \cite[Corollary 3.12]{BvH16} in that it 
attains the optimal constant $2$ in the probability bound assuming only
that the entries $X_{ij}$ are centered, rather than symmetrically 
distributed. For non-symmetric matrices, we obtain analogously
that
\begin{multline*}
	\mathbf{P}\Bigg[\|X\|\ge (1+\varepsilon)
	\Bigg(
	\max_i\sqrt{\sum_j \mathbf{E}[X_{ij}^2]} +
	\max_j\sqrt{\sum_i \mathbf{E}[X_{ij}^2]}
	\Bigg)
	 + t
	\Bigg]
	\\ \le
	\max(n,m) e^{-\varepsilon t^2/C\max_{ij}\|X_{ij}\|_\infty^2}
\end{multline*}
in the setting of Theorem \ref{thm:bmomentrect}.
Such ``matrix concentration inequalities'' have found numerous 
applications in applied mathematics (see \cite{Tro15} and the references 
therein).
\end{rem}

The rest of this section is devoted to the proof of Theorem 
\ref{thm:bmoment}. We omit the proof of Theorem \ref{thm:bmomentrect}, 
which follows in an identical manner.

To explain the idea behind the proof, let us make a basic observation: 
while Gaussian analysis played a crucial role for the norm bound of 
Theorem \ref{thm:schatten}, the entry distribution was completely 
irrelevant in the moment bound of Theorem \ref{thm:moment}. Indeed, all 
the proof does is to compare the moments of the nonhomogeneous random 
matrix $X$ with the moments of another random matrix $Y$ that has i.i.d.\ 
entries. If $X$ is Gaussian, then we may choose $Y$ to be Gaussian as 
well, and we conclude by invoking standard bounds on the norm of Gaussian 
Wigner matrices. If the entries of $X$ are bounded and centered, exactly 
the same argument will apply provided we select an appropriate entry 
distribution for the matrix $Y$. Modulo this minor observation, the rest 
of the proof transfers readily to the present setting.

\begin{proof}[Proof of Theorem \ref{thm:bmoment}]
As in the proof of Theorem \ref{thm:moment}, we will assume without loss
of generality that the diagonal entries of the matrix vanish $X_{ii}=0$.

Our starting point is again the moment formula
$$
	\mathbf{E}[\mathrm{Tr}[X^{2p}]] =
	\sum_{\mathbf{u}\in[n]^{2p}}
	\mathbf{E}[X_{u_1u_2}X_{u_2u_3}\cdots X_{u_{2p}u_1}].
$$
Each distinct edge $\{u_k,u_{k_1}\}$ that appears in the cycle 
$\mathbf{u}$ must be traversed at least twice for that term 
in the sum to be nonzero, as we assumed the random variables $X_{ij}$ are 
centered. We call a cycle with this property \emph{admissible}. Note 
that unlike in the Gaussian setting of section \ref{sec:moment}, an 
admissible cycle is not necessarily even. This distinction will turn out 
to require only minimal modifications to the proof.

Let us denote by $\mathcal{\tilde S}_{2p}$ the set of shapes of admissible 
cycles of length $2p$. Then 
\begin{align*}
	\mathbf{E}[\mathrm{Tr}[X^{2p}]] &=
	\sum_{\mathbf{s}\in\mathcal{\tilde S}_{2p}}
	\sum_{\mathbf{u}:\mathbf{s}(\mathbf{u})=\mathbf{s}}
	\mathbf{E}[X_{u_1u_2}X_{u_2u_3}\cdots X_{u_{2p}u_1}]  \\
	&\le
	\sum_{\mathbf{s}\in\mathcal{\tilde S}_{2p}}
	\sum_{\mathbf{v}\in[n]^{m(\mathbf{s})}}
	\prod_{e\in E(G(\mathbf{s}))}
	\mathbf{E}\big[|X_{v(e)}|^{k_e(\mathbf{s})}\big],
\end{align*}
where $m(\mathbf{s})$ is the number of distinct vertices visited by 
$\mathbf{s}$, $G(\mathbf{s})\in\mathcal{G}_{m(\mathbf{s})}$ is the
graph whose edges are given by 
$E(G(\mathbf{s}))=\{\{s_1,s_2\},\{s_2,s_3\},\ldots,\{s_{2p},s_1\}\}$, and
$k_e(\mathbf{s})$ is the number of times edge
$e\in E(G(\mathbf{s}))$ is traversed by $\mathbf{s}$.

We can now easily adapt the proof of Theorem \ref{thm:holder} to show that 
for every $\mathbf{s}\in\mathcal{\tilde S}_{2p}$, there exist
$k_1',\ldots,k_m'\ge 2$ with $\sum_{\ell=2}^m k_\ell'=2p$ such that
$$
	\sum_{\mathbf{v}\in[n]^{m(\mathbf{s})}}
        \prod_{e\in E(G(\mathbf{s}))}
        \mathbf{E}\big[|X_{v(e)}|^{k_e(\mathbf{s})}\big]
	\le
	\prod_{\ell=2}^{m(\mathbf{s})} 
	\Bigg( \sum_i \Bigg( \sum_j
	\mathbf{E}[|X_{ij}|^{k_\ell'}]
	\Bigg)^{\frac{2p}{k_\ell'}}\Bigg)^{\frac{k_\ell'}{2p}}.
$$
Indeed, if we simply replace $b_{ij}^k$ by $\mathbf{E}[|X_{ij}|^k]$ 
throughout, Lemma \ref{lem:tree} extends directly to the 
present setting (modulo a trivial application of Jensen's inequality
$\mathbf{E}[|X_{ij}|^{k(i)}]\le \mathbf{E}[|X_{ij}|^{k_m'}]^{k(i)/k_m'}$
in the second and in the last equation display of the proof), while 
Lemma \ref{lem:bizarrobl} can be applied verbatim. We further estimate
$$
	\sum_i \Bigg( \sum_j
        \mathbf{E}[|X_{ij}|^{k_\ell'}]\Bigg)^{\frac{2p}{k_\ell'}} 
\!	\le
	\sum_i \Bigg( \sum_j
        \mathbf{E}[|X_{ij}|^2]\Bigg)^{\frac{2p}{k_\ell'}} 
\!	\max_j \|X_{ij}\|_\infty^{\frac{2p(k_\ell'-2)}{k_\ell'}}
\!	\le
	\sigma_p^{\frac{4p}{k_\ell'}}
	(\sigma_p^*)^{2p-\frac{4p}{k_\ell'}}
$$
using H\"older's inequality, where $\sigma_p,\sigma_p^*$ are as defined in
Theorem \ref{thm:bmoment}.

By rescaling the matrix $X$, we may assume without loss of generality
that $\sigma_p^*=1$. Putting together the above estimates, we have shown 
in this case that
$$
	\mathbf{E}[\mathrm{Tr}[X^{2p}]] \le
	\sum_{\mathbf{s}\in\mathcal{\tilde S}_{2p}}
	\sigma_p^{2(m(\mathbf{s})-1)}.
$$
Now let $Y$ be an $r\times r$ symmetric matrix with entries
$Y_{ij}=(\delta_{ij}-\mathbf{E}[\delta_{ij}])/\mathrm{Var}(\delta_{ij})^{1/2}$, 
where $\delta_{ij}\sim \mathrm{Bern}(\tfrac{1}{4})$ are i.i.d.\ 
Bernoulli random variables for $i\ge j$. The key point of this choice of 
distribution is that $\mathbf{E}[Y_{ij}^k]\ge 1$ for every integer $k\ge 
2$, as may be verified by a simple explicit computation. We therefore 
obtain
$$
	\mathbf{E}[\mathrm{Tr}[Y^{2p}]] =
	\sum_{\mathbf{s}\in\mathcal{\tilde S}_{2p}}
	\sum_{\mathbf{u}:\mathbf{s}(\mathbf{u})=\mathbf{s}}
	\mathbf{E}[Y_{u_1u_2}Y_{u_2u_3}\cdots Y_{u_{2p}u_1}]  
	\ge \sum_{\mathbf{s}\in\mathcal{\tilde S}_{2p}}
	\frac{r!}{(r-m(\mathbf{s}))!}.
$$
It now follows exactly as in the proof of Theorem \ref{thm:moment} that
$$
	\mathbf{E}[\mathrm{Tr}[X^{2p}]] \le
	\mathbf{E}[\|Y\|^{2p}]
        \quad\mbox{for}\quad
        r=\lfloor\sigma_p^2\rfloor+p+1.
$$
To control $\|Y\|$, it remains to invoke a norm bound for Wigner 
matrices with non-symmetrically distributed entries. Such a bound can be 
found, for example, in \cite{PS07}, where it is shown that
$\mathbf{E}\|Y\|\le 2\sqrt{r}+C'$ for a universal constant $C'$.
Applying Talagrand's concentration inequality \cite[Theorem 6.10]{BLM13}, 
we can conclude that
$$
	\mathbf{E}[\|Y\|^{2p}]^{1/2p}\le
	2\sqrt{r}+C''\sqrt{p}
$$
for a universal constant $C''$. Therefore
$$
	\mathbf{E}[\mathrm{Tr}[X^{2p}]]^{1/2p} \le
	2\sigma_p+C\sqrt{p},
$$
concluding the proof of the moment bound. The tail bound
follows immediately by another application of Talagrand's
concentration inequality.
\end{proof}

\begin{rem}
By a further adaptation of the method of proof of Theorem 
\ref{thm:bmoment}, one can prove general bounds that apply even to 
unbounded entries: for example, if $X$ is a symmetric matrix with 
independent centered entries for $i\ge j$, we have
$$
	(\mathbf{E}\|X\|_{S_{2p}}^{2p})^{1/2p} \le
	2\Bigg(
	\mathbf{E}
	\Bigg[\sum_i
	\Bigg(\sum_j 
	X_{ij}^2\Bigg)^p\Bigg]\Bigg)^{1/2p} +
	C\sqrt{p}\,
	\Bigg(\mathbf{E}\Bigg[\sum_i\max_j X_{ij}^{2p}\Bigg]
	\Bigg)^{1/2p}
$$
for all $p\in\mathbb{N}$. This could be viewed as a random matrix analogue 
of Rosenthal's inequality \cite[Theorem 15.10]{BLM13}.
Unfortunately, this inequality typically does not give the correct scaling 
in $p$ when the entries are unbounded (consider, for example, the
case when the random variables are Gaussian). Nonetheless, such 
inequalities can be useful as they apply to very general situations 
without any additional work.

To prove this inequality, one simply 
estimates using H\"older's inequality
$$
	\sum_i \Bigg( \sum_j
        \mathbf{E}[|X_{ij}|^{k_\ell'}]\Bigg)^{\frac{2p}{k_\ell'}} 
	\le
	\Bigg(\mathbf{E}\Bigg[\sum_i\Bigg(\sum_j X_{ij}^2\Bigg)^p\Bigg]
	\Bigg)^{\frac{2}{k_\ell'}}
	\Bigg(\mathbf{E}\Bigg[\sum_i\max_j X_{ij}^{2p}\Bigg]
	\Bigg)^{1-\frac{2}{k_\ell'}}
$$
instead of the corresponding estimate in the proof of Theorem 
\ref{thm:bmoment}.
\end{rem}

\subsection*{Acknowledgments}
R.L.\ was supported by the National Science Centre of Poland under grant 
number 2015/18/A/ST1/00553. R.v.H.\ was supported in part by NSF grant 
CAREER-DMS-1148711 and by the ARO through PECASE award W911NF-14-1-0094. 
P.Y.\ was supported by grant ANR-16-CE40-0024-01. P.Y.\ would like to
thank O.\ Gu\'edon for introducing him to the questions investigated
in this paper.

This work was conducted while the authors were in residence at the 
Mathematical Sciences Research Institute in Berkeley, California, 
supported by NSF grant DMS-1440140. The hospitality of MSRI and of the 
organizers of the program on Geometric Functional Analysis is gratefully 
acknowledged.


\end{document}